\documentclass[11pt]{article}

\usepackage{mathtools}
\usepackage{color}
\usepackage{microtype}
\usepackage[usenames,dvipsnames,svgnames,table]{xcolor}
\usepackage{amsfonts,amsmath, amssymb,amsthm,amscd,mathrsfs}
\usepackage[height=9in,width=6.5in]{geometry}
\usepackage{verbatim}
\usepackage{tikz}
\usetikzlibrary{decorations.markings}
\tikzset{->-/.style={decoration={
			markings,
			mark=at position .6 with {\arrow{>}}},postaction={decorate}}}
\usepackage{tkz-euclide}

\usepackage[margin=10pt,font=small,labelfont=bf]{caption}
\usepackage{latexsym}
\usepackage[pdfauthor={ },bookmarksnumbered,hyperfigures,colorlinks=true,citecolor=BrickRed,linkcolor=BrickRed,urlcolor=BrickRed,pdfstartview=FitH]{hyperref}

\usepackage{graphicx}
\usepackage{enumitem}

\usepackage[inline,nolabel]{showlabels}

\usepackage[capitalize]{cleveref}
\Crefname{fact}{Fact}{Facts}
\crefname{theorem}{Theorem}{Theorems}
\crefname{thm}{Theorem}{Theorems}
\crefname{lemma}{Lemma}{Lemmas}
\crefname{claim}{Claim}{Claims}
\crefname{lem}{Lemma}{Lemmas}
\crefname{remark}{Remark}{Remarks}
\crefname{prop}{Proposition}{Propositions}
\crefname{defn}{Definition}{Definitions}
\crefname{corollary}{Corollary}{Corollaries}
\crefname{conjecture}{Conjecture}{Conjectures}
\crefname{question}{Question}{Questions}
\crefname{chapter}{Chapter}{Chapters}
\crefname{section}{Section}{Sections}
\crefname{part}{Part}{Parts}
\crefname{figure}{Figure}{Figures}

\newtheorem{theorem}{Theorem}[section]
\newtheorem{corollary}[theorem]{Corollary}
\newtheorem{lemma}[theorem]{Lemma}
\newtheorem{proposition}[theorem]{Proposition}
\newtheorem{question}[theorem]{Question}

\newtheorem{example}[theorem]{Example}

\newtheorem{definition}[theorem]{Definition}
\newtheorem{remark}[theorem]{Remark}

\numberwithin{equation}{section}

\newtheorem*{theorem*}{Theorem}
\newtheorem*{claim*}{Claim}
\newtheorem*{conj*}{Conjecture}
\newtheorem*{corollary*}{Corollary}
\newtheorem*{definition*}{Definition}
\newtheorem*{example*}{Example}
\newtheorem*{exercise*}{Exercise}
\newtheorem*{lemma*}{Lemma}
\newtheorem*{observation*}{Observation}
\newtheorem*{proposition*}{Proposition}
\newtheorem*{question*}{Question}
\newtheorem*{remark*}{Remark}

\newcommand{\bbE}{{\ensuremath{\mathbb E}} }

\newcommand{\bbN}{{\ensuremath{\mathbb N}} }

\newcommand{\bbP}{{\ensuremath{\mathbb P}} }

\newcommand{\bbR}{{\ensuremath{\mathbb R}} }

\newcommand{\bbZ}{{\ensuremath{\mathbb Z}} }

\newcommand{\bE}{{\ensuremath{\mathbf E}} }

\newcommand{\bP}{{\ensuremath{\mathbf P}} }

\newcommand{\cI}{{\ensuremath{\mathcal I}} }

\newcommand{\sM}{{\ensuremath{\mathscr M}} }

\newcommand{\sT}{{\ensuremath{\mathscr T}} }

\DeclareMathSymbol{\leqslant}{\mathalpha}{AMSa}{"36} 
\DeclareMathSymbol{\geqslant}{\mathalpha}{AMSa}{"3E} 
\DeclareMathSymbol{\eset}{\mathalpha}{AMSb}{"3F}     
\renewcommand{\le}{\;\leqslant\;}                   
\renewcommand{\ge}{\;\geqslant\;}                   
\renewcommand{\leq}{\;\leqslant\;}                   
\renewcommand{\geq}{\;\geqslant\;}                   

\newcommand{\be}{\begin{equation}}
	\newcommand{\ee}{\end{equation}}
\newcommand\ba{\begin{align}}
	\newcommand\ea{\end{align}}

\newcommand*\diff{\mathop{}\!\mathrm{d}}

\usepackage{nicematrix}
\usepackage{xcolor}  

\newcommand{\UF}{\mathbb{P}_{\mathrm{UF}}}

\newcommand{\UST}{\mathrm{UST}}
\newcommand{\PUST}{\bbP_\mathrm{UST}}
\newcommand{\EUST}{\bbE_\mathrm{UST}}

\newcommand{\MST}{\mathrm{MST}}
\newcommand{\PMST}{\bbP_\mathrm{MST}}
\newcommand{\EMST}{\bbE_\mathrm{MST}}

\newcommand{\PWIT}{\mathrm{U}}
\newcommand{\WMSF}{\mathrm{WMSF}}
\newcommand{\FMSF}{\mathrm{FMSF}}

\newcommand{\WP}{\mathrm{M}}

\DeclareMathOperator{\dist}{dist}

\newcommand{\Gb}{\mathcal{G}_\bullet}

\newcommand{\reff}{\mathcal{R}_{\mathrm{eff}}} 

\begin{document}
\author{
	Pengfei Tang\thanks{Center for Applied Mathematics and KL-AAGDM, Tianjin University, Tianjin, 300072, China.
		Email: \textsf{pengfei\_tang@tju.edu.cn}. Supported by the National Natural Science Foundation of China No. 12571151.}
	\qquad
	Zibo Zhang\thanks{School of Mathematics, Tianjin University, Tianjin, 300350, China.
		Email: \textsf{19932788533@163.com}.}
}
\date{\today}
\title{From second moments to pairwise negative correlation: \\applications to  minimal and uniform spanning trees}
\maketitle

\tableofcontents
\newpage

\begin{abstract}
We uncover a close connection between the second moment of the degree of a typical vertex in a random subgraph and the pairwise negative correlation (p-NC) property. On one hand, we exploit this connection to prove the p-NC property for non-adjacent edges in minimal spanning trees on complete graphs. On the other hand, we apply the classical p-NC property of uniform spanning trees to derive a universal upper bound on the second moment of the degree of a uniformly chosen vertex in uniform spanning trees on finite,  connected, regular graphs, thereby resolving an open question posed by Nachmias and Peres. Furthermore, we determine that the optimal upper bound is exactly $6$, and the method for achieving this optimal bound is interesting in itself---the proof uses Edmonds' matroid polytope theorem. 
\end{abstract}

\section{Introduction and main results}
Negative correlation is a fundamental probabilistic concept that captures the intuition that the occurrence of one event reduces the likelihood of another. Such negative dependence properties play a crucial role in the analysis of random combinatorial structures, with applications ranging from statistical mechanics to random graphs. 
Let $G=(V,E)$ be a finite graph (meaning  both $V$ and $E$ are finite sets).  We say a probability measure $\mu$ on $\Omega\coloneq \{0,1\}^E$ has the \emph{pairwise negative correlation} (p-NC) property if for any two distinct edges $e,f\in E$, 
\be\label{eq:def-p-NC}
\mu\big[  \omega(e)=\omega(f)=1 \big]\leq \mu\big[\omega(e)=1\big]\cdot \mu\big[\omega(f)=1\big]\,. 
\ee
We write $\eta(\omega)\coloneq \{ e\mid  \omega(e)=1 \}$ and often view $\omega$ as the subgraph $\big(V,\eta(\omega)\big)$. 
In \cite[Corollary~3.3]{TZ2026}, the authors observed two simple relations between the p-NC property of a random subgraph of a \textit{complete graph} and the second moment of the degree of a uniformly chosen vertex in that subgraph.  
In the present paper we generalize these relations to arbitrary finite, connected graphs and present two applications: one for the minimal spanning tree (MST) on complete graphs, and one for the uniform spanning tree (UST)  on finite, connected, regular graphs.

\subsection{Minimal spanning tree}
The first model we consider is the minimal spanning tree (MST)  on finite, connected graphs (a precise definition is given in Section~\ref{sec:2.1}). 
A natural question is whether the MST measure on $\{0,1\}^E$ satisfies the p-NC property. For uniform spanning trees, the p-NC property is well-established \cite[Section 4.2]{LP2016}. For MSTs, however, the situation is more nuanced: negative correlation can fail on some  graphs \cite[Section 5]{Lyons_Peres_Schramm2006minimal_spanning_forests}. For example, Lyons, Peres and Schramm \cite{Lyons_Peres_Schramm2006minimal_spanning_forests} constructed a modification of  $K_4$ (replacing each of two non-adjacent edges by three parallel edges) in which those two non-adjacent edges are positively correlated under the MST measure.

Nonetheless, since the MST measure is supported on spanning trees, the presence of an edge tends to create cycles which exclude other edges; this suggests that some negative dependence might persist in highly symmetric graphs. In particular, complete graphs $K_n$ are natural candidates for which one might expect the p-NC property to hold. In the present work we partially confirm this expectation by proving the p-NC property for pairs of non-adjacent edges in $K_n$ for all sufficiently large $n$.

\begin{theorem}\label{thm: NCMST}
	There exists a constant $C>0$ such that for all $n\ge C$, the p-NC property holds for any pair of non-adjacent edges of $K_n$ under the MST measure $\PMST^{(K_n)}$. That is, if $e$ and $f$ are non-adjacent edges of $K_n$ with $n\ge C$, then 
	\be\label{eq:NCMST}
	\PMST^{(K_n)} \big[  \omega(e)=\omega(f)=1 \big]\leq \PMST^{(K_n)}\big[\omega(e)=1\big]\cdot \PMST^{(K_n)}\big[\omega(f)=1\big]\,.
	\ee
\end{theorem}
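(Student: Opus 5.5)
The plan is to reduce \eqref{eq:NCMST} to a lower bound on the second moment of the degree of a vertex in the MST of $K_n$, and then get that bound from the local limit of the MST.

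\emph{Step 1: symmetry reduction.} Under $\PMST^{(K_n)}$ with i.i.d.\ continuous weights, the symmetric group acts transitively on edges, on ordered pairs of adjacent edges, and on ordered pairs of non-adjacent edges. So there are constants $p_1=\PMST[\omega(e)=1]$, $p_{\rm adj}$ and $p_{\rm non}$ giving the corresponding probabilities. Since the tree has $n-1$ edges and $K_n$ has $N=n(n-1)/2$ edges, $p_1=2/n$.

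\emph{Step 2: count ordered pairs.} Counting ordered pairs of distinct edges in the tree gives
\[
(n-1)(n-2)=n(n-1)(n-2)\,p_{\rm adj}+\bigl[N(N-1)-n(n-1)(n-2)\bigr]p_{\rm non}.
\]
The bracket equals $n(n-1)(n-2)(n-3)/4$. Hence $p_{\rm non}\le p_1^2=4/n^2$ is equivalent to $p_{\rm adj}\ge 3/n^2$. Writing $D_n$ for the degree of a fixed vertex, we have
\[
\bbE D_n^2=\bbE D_n+(n-1)(n-2)p_{\rm adj}, \qquad \bbE D_n=2(n-1)/n.
\]
So p-NC for non-adjacent pairs is equivalent to
\[
\bbE[D_n^2]\ \ge\ \frac{2(n-1)}{n}+\frac{3(n-1)(n-2)}{n^2}\ \xrightarrow[n\to\infty]{}\ 5 .
\]
This is the general second-moment/p-NC relation specialized to $K_n$. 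The threshold $5$ is exactly $\bbE(1+\mathrm{Poi}(1))^2$, the UST value. It therefore suffices to show that $\liminf_n \bbE D_n^2>5$.

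\emph{Step 3: local limit.} By Aldous--Steele, the MST of $K_n$ with weights rescaled by $n$ converges locally to the wired/free minimal spanning forest of the Poisson weighted infinite tree $\PWIT$, and these two forests coincide there. To obtain $\bbE D_n^2\to\bbE D^2$, where $D$ is the root degree in the PWIT forest, I will prove uniform integrability of $D_n^2$. I would try a tail bound first. An edge $\{v,u\}$ of weight $w$ lies in the MST only if $v$ and $u$ are not already joined by a path of edges with weights below $w$. For large $k$, the event that $v$ has $k$ tree edges forces many low-weight edges at $v$ whose other endpoints lie in distinct components of the subcritical graph $G(n,w/n)$. This should give $\bbP(D_n\ge k)\le e^{-ck}$ uniformly in $n$.

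\emph{Step 4: compute the limit (the main obstacle).} In the PWIT, the root's edge to a neighbor at weight $x$ is kept iff the root and that neighbor, after the edge is removed, are not both in infinite clusters of the percolation at level $x$. Using that the clusters are independent Poisson($x$) Galton--Watson trees with survival probability $\theta(x)$, this probability is $1-\theta(x)^2$. A second-moment computation then gives
\[
\bbE D^2=\bbE D+\int\!\!\int \bbP(\text{edges at weights }x,y\text{ both kept})\,dx\,dy .
\]
Here the joint probability involves three clusters: the root's cluster minus the two edges, and the two neighbors' clusters. This is again expressible through $\theta$. The hard part is to evaluate or bound this double integral explicitly and show the result strictly exceeds $5$. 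The intuition is that MST degrees are more spread out than UST degrees, since leaves and hubs are both favored. If a closed form is unavailable, I would bound the integral from below on a region, e.g.\ $x,y\le 1$ where $\theta=0$ and both edges are automatically kept, contributing $1$ to the pair count. I would then add rigorous numerical estimates for the remaining region. With $\lim\bbE D_n^2>5$ in hand, the inequality of Step 2 holds for all $n\ge C$.
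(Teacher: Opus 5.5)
Your Steps~1--2 are correct and are exactly the paper's reduction (Lemma~\ref{lem:2nd-pNC} and Corollary~\ref{cor: reduction to second moment bound}). P-NC for non-adjacent pairs is equivalent to $\bbE D_n^2\ge 5-\frac{11}{n}+\frac{6}{n^2}$, so everything hinges on showing that the limiting root degree satisfies $\bbE D^2>5$. \textbf{That is precisely the step you do not carry out, so the proposal has a genuine gap at its core.}

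The heuristic that MST degrees are ``more spread out'' than UST degrees is not an argument, and your fallback is far too weak. The region $x,y\le 1$ contributes $1$ to $\bbE D(D-1)$, but you need $\bbE D(D-1)>3$, and the true value is only about $3.19$. So the rest of the double integral would have to be pinned down to within about $0.19$, by numerics you have not specified.

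Your route can nevertheless be completed exactly. Write $q=1-\theta$. Let $A(s),B(s),C(s)$ be the events that, at level $s$, the root's cluster (with both edges removed) and the clusters of the two neighbours are infinite. Each has probability $\theta(s)$, each is increasing in $s$, and they are mutually independent by the Palm property of the Poisson process. For weights $x<y$, both edges are kept iff $\{A(x)\cap B(x)\}^c\cap\{(A(y)\cup B(y))\cap C(y)\}^c$ occurs. Using $A(x)\subseteq A(y)$, this has probability $(1-\theta(x)^2)\,q(y)+\theta(y)\,q(y)^2$. Hence
\[
\bbE D(D-1)=1+2\int_1^\infty\Bigl[q(y)\int_0^y\bigl(1-\theta(x)^2\bigr)\,dx+y\,\theta(y)\,q(y)^2\Bigr]\,dy .
\]
The parametrization $q=e^{-t}$, $\lambda=t/(1-e^{-t})$ turns every piece into series in $\zeta(2),\zeta(3)$. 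It gives exactly $8-4\zeta(3)$, i.e.\ $\bbE D^2=10-4\zeta(3)\approx 5.19$, matching the paper.

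The paper gets there differently. It decomposes the root's component into the ponds and outlets of invasion percolation. It reuses the UST value $5-\frac{11}{Z}+\frac{6}{Z^2}$ for the first pond (a uniform labelled tree of size $Z$ with uniform root). It then uses the tree-counting identity $U=T-\frac{T^2}{2}$ to arrive at $\bbE N^2-5=\int_1^\infty\theta(\lambda)q(\lambda)\bigl[1-\lambda q(\lambda)\bigr]\,d\lambda$. This is visibly positive because $q(\lambda)<1/\lambda$, so no evaluation is needed for the theorem itself.

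Two further points. First, in Step~3 uniform integrability is unnecessary, and the exponential tail bound you sketch is only a heuristic. You only need $\liminf_n\bbE D_n^2\ge\bbE D^2$, and this follows from $D_n\to D$ in distribution alone via Fatou's lemma (or $\bbE D_n^2\ge\bbE\min(D_n^2,K)\to\bbE\min(D^2,K)$, then let $K\to\infty$). This is what the paper does in Proposition~\ref{prop: N_n to N}. The convergence $\bbE D_n^2\to\bbE D^2$ you aim for is actually left there as an open question (Question~\ref{ques: N_n to N}).

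Second, the free and wired minimal spanning forests do \emph{not} coincide on the PWIT. It is a tree, so the free forest deletes no edge at all. The local limit is the root component of the wired forest; fortunately, the keep/delete criterion you actually use in Step~4 is the wired one.
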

\begin{remark}\label{rem:knn}
	The same proof applies, with only minor modifications, to show that the MST measure on complete balanced bipartite graphs $K_{n,n}$ also satisfies the p-NC property for all pairs of non-adjacent edges whenever $n$ is sufficiently large.
\end{remark}

\subsection{Uniform spanning tree}
Given a finite, connected graph $G=(V,E)$, the uniform spanning tree measure $\PUST=\PUST(G)$ is the uniform probability measure on the set of spanning trees of $G$.
The $\UST$ is a classical and well-studied model in probability theory; its  p-NC property is standard (see e.g.~\cite[Chapter 4]{LP2016}).

In \cite[Section 7.2]{NP2022}, Nachmias and Peres investigated  moments of vertex degrees and sizes of graph-distance balls in  USTs on finite, connected, regular graphs. In particular, Lemma~7.3 in \cite{NP2022} states that there exists a constant $C>0$ such that for any finite, connected, regular graph $G$,  the degree of a uniformly random vertex $X$ in a UST sample $\omega$ on $G$ satisfies
\[
\PUST\big[\deg(X;\omega) \ge k \big]\leq \frac{C}{k^2} \quad \text{for all }\,\,k\geq 1\,.
\] 
This tail bound implies that the $p$-th moment of  $\deg(X;\omega)$ is bounded for all $p\in(1,2)$. They also provided an example showing that the moment need not be bounded when $p>2$. On page 1161 of \cite{NP2022}, Nachmias and Peres posed the open question  of whether the second moment  is uniformly bounded over all finite, connected, regular graphs. 
We  provide an affirmative answer to this question and give two proofs for this result. The first  proof is  concise  via an application of the p-NC property of USTs, while the second one is more complicated but yields the sharp bound $6$. 

\begin{theorem}\label{thm:UST}
	There exists a constant $C>0$ such that for any finite,  connected, regular graph $G=(V,E)$, one has 
	\[
	\bbE\big[\deg(X;\omega)^2 \big]\leq C\,,
	\]
	where $\omega$ is a sample of  UST on $G$, the vertex $X$ is chosen uniformly at random from $V$ and independently of $\omega$, and $\deg(X;\omega)$ denotes the degree of $X$ in the UST sample $\omega$. 
\end{theorem}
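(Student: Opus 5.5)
We expand the second moment as a double sum over edges and use the p-NC property of the UST, \cite[Section 4.2]{LP2016}, to replace each joint inclusion probability by a product. Let $G$ be $d$-regular with $n=|V|$ vertices. Since $X$ is uniform and independent of $\omega$,
\[
\bbE\big[\deg(X;\omega)^2\big]=\frac1n\sum_{v\in V}\sum_{e,f\ni v}\PUST[e,f\in\omega]
=\frac1n\sum_{v}\sum_{e\ni v}\PUST[e\in\omega]+\frac1n\sum_{v}\sum_{\substack{e\neq f\\ e,f\ni v}}\PUST[e,f\in\omega].
\]
Here a self-loop would contribute twice, but loops never lie in a spanning tree, so we may discard them. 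The diagonal term equals $\frac1n\cdot 2\,\bbE|\eta(\omega)|=2(n-1)/n<2$.

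For the off-diagonal term, p-NC gives $\PUST[e,f\in\omega]\le \PUST[e\in\omega]\PUST[f\in\omega]$. Writing $r(e)=\PUST[e\in\omega]$, which by Kirchhoff's theorem is the effective resistance between the endpoints of $e$ (for unit conductances), the off-diagonal term is at most
\[
\frac1n\sum_v\Big(\sum_{e\ni v}r(e)\Big)^2 .
\]
It therefore suffices to show that this average of squared "resistance-degrees" $R(v):=\sum_{e\ni v} r(e)$ is bounded.

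We know $\sum_v R(v)=2(n-1)$ by Foster's theorem, so the average of $R(v)$ is bounded. The key step is a pointwise bound $R(v)\le C'$ for every $v$, using regularity. Each edge satisfies $r(e)\le 1$. For the edges at $v$, the Nash-Williams / Rayleigh estimate gives roughly $\reff(v\leftrightarrow u)\le \frac{1}{d}+\frac{1}{d}+O(\cdot)$ when $u,v$ have many common neighbours. That is not true in general, however, so I would use a cleaner route. By the commute-time formula, $r(e)$ is the probability that the random walk from $v$ started along $e$ does not return to $v$ before hitting $u$. Equivalently, by Kirchhoff, $R(v)=\sum_{u\sim v}\reff(v,u)=\bbE[\deg(v;\omega)]$. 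The standard bound $\bbE\deg(v;\omega)\le$ const for regular graphs is not automatic either: a vertex with all $d$ incident edges being bridges-like has $R(v)=d$.

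So pointwise boundedness fails. Instead I would bound $\frac1n\sum_v R(v)^2$ directly, using $R(v)\le d$ and $\sum_v R(v)\le 2n$. Regularity is essential here: vertices with $R(v)$ large must have many incident edges of large resistance. Since $\sum_e r(e)=n-1$ and there are $nd/2$ edges, few edges can have resistance above $1/2$. The main obstacle is controlling $\sum_v R(v)^2$ when large-resistance edges concentrate at few vertices. My first attempt would be to show that $\sum_{f\ni v} r(f)\,r(e)$ summed over pairs is at most $O(1)\sum_e r(e)$. Concretely, I would aim for $r(e)\le \frac{C}{d}+\mathbf 1[e \text{ is a "thin" edge}]$, and argue via Nash-Williams that each vertex is incident to $O(1)$ thin edges. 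A thin edge is one whose endpoints are not joined by $\ge d/2$ edge-disjoint short paths; in a $d$-regular graph, two adjacent vertices with $\ge d/2$ disjoint paths of length $\le 3$ have $\reff\le C/d$, and one needs a counting argument showing only $O(1)$ neighbours fail this. Given that bound, $R(v)\le C+O(1)$ pointwise and the theorem follows with $C\approx 2+(C+O(1))^2$.
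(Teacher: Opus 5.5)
\textbf{There is a genuine gap.} Your reduction is sound and is essentially the paper's: by p-NC and Kirchhoff's formula it suffices to show $\frac1n\sum_v R(v)^2=O(1)$ with $R(v)=\sum_{e\ni v}\reff(e)$. The problem is how you propose to prove this. Your final plan is a \emph{pointwise} bound $R(v)=O(1)$, via $r(e)\le C/d+\mathbf 1[e\text{ thin}]$ and the claim that every vertex has only $O(1)$ thin edges. As you noticed a few lines earlier, no pointwise bound can hold. In the paper's graph $G_d$ (Example~\ref{example}) the hub $c$ is incident to $d$ bridges, so $R(c)=\bbE[\deg(c;\omega)]=d$, and all $d$ of these edges are thin.

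The thin-edge notion is also the wrong proxy for small resistance. In any $d$-regular graph of girth at least $5$, adjacent vertices are joined by no other path of length $\le 3$. So \emph{every} edge is thin, and your bound degenerates to $r(e)\le 1$, even though for high-girth expanders all edge resistances are $O(1/d)$. The facts you actually have, $R(v)\le d$ and $\sum_v R(v)=2(n-1)$, only give $\frac1n\sum_v R(v)^2\le 2d$. Markov applied to Foster's theorem only says that at most $2(n-1)$ edges have $r(e)\ge 1/2$, which is not enough either.

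The missing ingredient is the \emph{lower} bound $\reff(u\leftrightarrow v)\ge \frac{1}{d+1}+\frac{1}{d+1}=\frac{2}{d+1}$ for adjacent vertices of a $d$-regular simple graph (Nachmias--Peres). Since there are $nd/2$ edges, this baseline already uses up $\frac{nd}{d+1}$ of Foster's total $n-1$. Hence the excess satisfies $\sum_e\bigl(\reff(e)-\frac{2}{d+1}\bigr)=\frac{n-d-1}{d+1}$. This is what makes large resistances rare; it is the content of Lemma~\ref{lem:NP2022}, which says at most $n/(\varepsilon d-2)$ edges have $\reff(e)\ge\varepsilon$. It yields $\sum_e\reff(e)^2=O(n/d)$, either by the paper's dyadic decomposition or directly from $\reff(e)^2\le a^2+(2a+1)(\reff(e)-a)$ with $a=\frac{2}{d+1}$.

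Concentration at a single vertex is then handled by averaging rather than pointwise. Use $R(v)^2\le d\sum_{e\ni v}\reff(e)^2$; the paper uses the equivalent $r(e)r(f)\le\frac12\bigl(r(e)^2+r(f)^2\bigr)$. This gives $\sum_v R(v)^2\le 2d\sum_e\reff(e)^2=O(n)$. In $G_d$, for instance, the hub's contribution $R(c)^2=d^2$ is paid for by the $d$ bridges, which contribute $d$ to $\sum_e\reff(e)^2$. That is affordable because there $n/d\asymp d$.
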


\begin{theorem}\label{thm:UST-sharp}
The sharp constant in Theorem~\ref{thm:UST} is $6$. 
\end{theorem}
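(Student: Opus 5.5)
The plan is to prove the two halves of the statement separately: first that $\bbE[\deg(X;\omega)^2]\le 6$ for every finite, connected, regular $G$, and then that no smaller constant works, via a sequence of regular graphs along which the second moment tends to $6$. Both halves start from the same decomposition. Write $T=\eta(\omega)$, $p_e=\PUST[e\in T]$, $n=|V|$, and $s_v=\sum_{e\ni v}p_e=\bbE[\deg(v;\omega)]$. Then
\[
\bbE\big[\deg(X;\omega)^2\big]=\frac1n\sum_{v}\sum_{e\ni v}p_e+\frac1n\sum_{v}\sum_{\substack{e\neq f\\ e,f\ni v}}\PUST[e,f\in T].
\]
Since $\sum_e p_e=n-1$, the first term equals $2(n-1)/n<2$. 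For the second term I use the p-NC property of the UST, which bounds it by $\frac1n\sum_v\big(s_v^2-\sum_{e\ni v}p_e^2\big)\le\frac1n\sum_v s_v^2$. This step already gives the concise proof of Theorem~\ref{thm:UST}, provided $\frac1n\sum_v s_v^2$ is bounded. So for the sharp bound it suffices to show
\[
\frac1n\sum_v s_v^2\le 4 .
\]
Here $4$ is exactly the square of the mean of $s_v$, since the mean is $2(n-1)/n$.

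The main obstacle is controlling the spread of the $s_v$. A single star-like tree gives a huge $\sum_v\deg^2$, so the bound must come from the averaged vector $(p_e)$, not from individual trees. This is where Edmonds' matroid polytope theorem enters: $(p_e)_e$ is a convex combination of spanning-tree indicators, so it lies in the spanning tree polytope. Hence for every $S\subseteq V$ we have $\sum_{e\subseteq S}p_e\le |S|-1$, and also $\sum_e p_e=n-1$. On a $d$-regular graph, $p_e=\reff(e)$, and each vertex has exactly $d$ incident edges. I would combine these two facts to bound the \emph{upper tail} of $s_v$. Applying the polytope inequality to $S=\{v:s_v\ge t\}$ gives that the mass of edges inside $S$ is at most $|S|-1$. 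The mass of edges leaving $S$ should then be controlled by a regularity argument: Rayleigh monotonicity, and the fact that a vertex $u$ of a $d$-regular graph satisfies $\sum_{e\ni u}\reff(e)\ge$ something forced by its $d$ neighbours.

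I expect the clean statement to be a majorization-type inequality: $(s_v)$ is "no more spread out than" a vector averaging about $2$. I would first try to prove the quadratic bound directly as a linear program over the polytope, maximizing the convex function $\sum_v s_v^2$. Its maximum is attained at a vertex of the polytope, i.e.\ at an actual spanning tree. So in the worst case I would have to use the regularity constraint, $p_e\le 1$ with $d$ edges per vertex, to rule out star-like extreme configurations. If this fails, I would weaken the target to showing that the total loss in the p-NC step, $\frac1n\sum_v\sum_{e\ni v}p_e^2$ together with the slack in the negative correlation, compensates for the excess of $\frac1n\sum_v s_v^2$ over $4$.

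For sharpness, the estimate above shows that the bound $6$ requires nearly all $s_v\approx 2$, nearly all $p_e\approx0$ (so that the losses $\sum_{e\ni v} p_e^2$ vanish), and nearly tight negative correlation. Complete graphs only give $5$, since there $\deg$ is approximately $1+\mathrm{Poisson}(1)$. So I would look for $d$-regular graphs with $d\to\infty$ in which most of the spanning-tree structure is carried through sparse "bottleneck" connections between dense blocks. Concrete candidates are copies of $K_{d}$ or $K_{d,d}$ minus a perfect matching, joined cyclically or in a tree-like pattern, where the local degree law of the UST is computable via transfer-current or effective-resistance formulas. I would then compute $\bbE[\deg(X;\omega)^2]$ along such a family and verify that it tends to $6$, adjusting the gadget until the limit matches the upper bound.
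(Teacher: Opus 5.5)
Both halves have a genuine gap. For the upper bound, your reduction to $\frac1n\sum_v s_v^2\le 4$ cannot work, because that inequality is false in general. Since the mean of $s_v$ is $2(n-1)/n$, it would force the empirical variance of $(s_v)_v$ below $8/n$. Here is a counterexample. For odd $d$ and $q=d+2$, let $B$ be $K_q$ with the edges $\{o,a\},\{o,b\}$ deleted, together with a perfect matching of the remaining $q-3$ vertices. Attach $d$ copies of $B$ through their ports $o_i$ to a new vertex $c$. The result is $d$-regular with $n=1+dq$. The edges $\{c,o_i\}$ are bridges, so $s_c=d$. Inside the blocks $\reff=\frac2q+O(q^{-2})$, so $s_v=2+O(1/q)$ off the ports and $s_{o_i}=3+O(1/q)$. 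Hence $\frac1n\sum_v s_v^2\to 1+4=5$.

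Worse, the bound you obtain right after invoking p-NC, $2-\frac2n+\frac1n\sum_v\bigl(s_v^2-\sum_{e\ni v}p_e^2\bigr)$, tends to $7$ on these graphs, since the subtracted squares average $O(1/d)$. The true value tends to $6$. So no argument that first replaces $\PUST[e,f\in T]$ by $p_ep_f$ can reach $6$. The ``slack in the negative correlation'' in your fallback is exactly the missing ingredient, and it has to be computed, not estimated away. The paper gets it from the transfer-current theorem: for $e=\{x,y\}$, $f=\{x,z\}$, one has $\PUST[e,f\in T]=r_1r_2-\frac14(r_1+r_2-t)^2$ with $t=\reff(y\leftrightarrow z)$. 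Write $r_1,r_2,t$ as $a+\alpha,a+\beta,a+\gamma$ with $a=\frac2{d+1}$; then $\alpha,\beta,\gamma\ge0$ by the lower bound $\reff\ge a$ for vertex pairs in a $d$-regular graph. The leading term is $\frac34a^2$, which contributes $3$ instead of $4$ after multiplying by $d(d-1)$. The linear corrections are $O(1/d)$ by Foster's theorem and Kirchhoff's node law. Finally, $d(d-1)\bbE[\alpha\beta]\le 1+O(1/d)$ uses Edmonds' theorem in its nontrivial direction, applied to the shifted vector $(\alpha_e)_e$ rather than to $(p_e)_e$. This vector has total mass $\frac{n-d-1}{d+1}$ and lies in the forest polytope, hence is an average of forests. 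The deterministic bound $\sum_x\deg(x;F)^2\le(d+2)|F|$ for forests in a $d$-regular graph then closes the argument. That $(p_e)_e$ lies in the spanning-tree polytope is the trivial direction, and it is too weak on its own.

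The sharpness half is only a search plan, and the heuristics you derive from the false reduction point away from the extremizer. In the family above, negative correlation is far from tight: $\PUST[e,f\in T]\approx\frac34p_ep_f$ inside the blocks. The excess $1$ over the value $5$ of near-complete graphs comes entirely from the single vertex $c$. Its $d$ edges are bridges with $p_e=1$, and it contributes $d^2/n\to1$. This requires blocks on $d+O(1)$ vertices, so that $n\approx d^2$, all hung on one hub; parity then forces $d$ odd and $q=d+2$. Dense blocks joined cyclically or in a tree pattern through ports add only $O(1)$ per block and give limit $5$. Even if hung on a hub, blocks on about $2d$ vertices, like your $K_{d,d}$ gadgets, halve the hub's share and give only about $5.5$. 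Once the right graph is chosen, the limit is verified by showing $\reff=\frac2q+O(q^{-2})$ uniformly in a block. This follows from $L_B^+=\frac1q(I-\frac1qL_H)^{-1}$ on $\mathbf 1^\perp$, where $H$ is the deleted subgraph. The transfer-current formula then gives each ordered adjacent pair probability $\frac3{q^2}+O(q^{-3})$. So the block average of $\deg^2$ is $2+3+O(1/q)$, and adding the hub's $d^2/n\to1$ gives $6$.
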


\subsection{Proof ideas and  paper organization}

\noindent\textbf{Sketch of the proof of Theorem~\ref{thm: NCMST}.} 
By edge transitivity and a first-moment calculation, each edge of $K_n$ belongs to the MST with probability $2/n$. Using a second-moment method, we relate the joint probability that two given non-adjacent edges both appear to the second moment of the degree of a fixed vertex $x$ in the MST. Corollary~\ref{cor: reduction to second moment bound} shows that for large $n$, the desired negative correlation inequality would follow if we could prove $\mathbb{E}_{\mathrm{MST}}[\deg(x)^2] > 5$. To obtain this lower bound, we use the fact that the local limit of the MST on $K_n$  is the component of the root in the wired minimal spanning forest (WMSF) on the Poisson weighted infinite tree (PWIT). Computing the second moment of the root degree in this limiting tree yields a value strictly greater than $5$ (approximately $5.19$). Fatou's lemma then transfers this lower bound to the finite graphs for all sufficiently large $n$, completing the proof.

\vskip 3mm

\noindent\textbf{Sketch of the proof of Theorem~\ref{thm:UST}.} 
Lemma~\ref{lem:2nd-pNC} relates the sum over vertices of expected squared degrees to the sum over ordered adjacent edge pairs of the probability that both edges appear. The p-NC property of USTs then allows us to bound each such joint probability by the product of the individual edge probabilities. Kirchhoff's effective resistance formula replaces each edge probability by the effective resistance between its endpoints. On a regular graph, Lemma~3.1 in \cite{NP2022}  shows that edges with large effective resistance are few, so the sum of squared resistances is bounded by a constant times the number of vertices. Averaging over a uniform vertex yields a universal constant bound.

\vskip 3mm

\noindent\textbf{Sketch of the proof of Theorem~\ref{thm:UST-sharp}.} 
For the upper bound $6$, we express the average squared degree in terms of the probability that two distinct random edges incident to a random vertex both appear in the UST. The transfer-current theorem gives an exact formula involving three effective resistances. Writing each resistance as a baseline value (the minimum possible for neighbors in a regular graph) plus a nonnegative correction, we bound the resulting expression using expectations of these corrections. Foster's theorem handles the linear corrections, while the quadratic correction is controlled by Edmonds' matroid polytope theorem: the vector of resistance corrections lies in the convex hull of forest indicators, allowing deterministic forest degree bounds. Substituting all estimates yields a final bound strictly less than $6$.

For sharpness, we construct a family of $d$-regular graphs by gluing many copies of a dense block (built from a nearly complete graph) to a central vertex via bridge edges. In the UST, the bridges are always present, and each block behaves like a nearly complete graph. A calculation using the Moore--Penrose inverse shows that the average squared degree tends to $6$ as $d$ grows, proving the constant cannot be improved.

\vskip 3mm

\noindent\textbf{Organization of the paper} \\
Section~\ref{sec:relation} presents the general relation between the second moment of the degree and the p-NC property. Section~\ref{sec:pf-MST} proves Theorem~\ref{thm: NCMST}: after reviewing necessary background on MSTs, local limits, and the PWIT (Section~\ref{sec:MST_background}), we compute the second moment of the root degree in the WMSF on the PWIT (Section~\ref{sec:MST_second_moment}) and complete the proof (Section~\ref{sec:MST_conclusion}). Section~\ref{sec:UST} proves Theorems~\ref{thm:UST} and~\ref{thm:UST-sharp}. Section~\ref{sec:UST_prelim} collects preliminaries on electrical networks, graph Laplacians, and Edmonds' matroid polytope theorem. Section~\ref{sec:pf-UST} proves Theorem~\ref{thm:UST} using the p-NC relation and effective resistance estimates. Finally, Section~\ref{sec:pf-USTsharp} provides the proof of Theorem~\ref{thm:UST-sharp}: the upper bound $6$ via the transfer-current theorem and Edmonds' matroid polytope theorem (Section~\ref{sec:UST_upper}), and a matching lower bound by an explicit graph construction (Section~\ref{sec:UST_lower}).

\section{Relating the second moment to pairwise negative correlation}\label{sec:relation}

In this section we present the relation between the second moment of the degree and the p-NC property. The following lemma generalizes Lemma 3.2 in \cite{TZ2026} (restricted to the $k=1$ case) and its proof is similar. We include full details for the reader's convenience.

\begin{lemma}\label{lem:2nd-pNC}
	Suppose $G=(V,E)$ is a finite, connected graph. Let $\sT(G)$ be the set of spanning trees of $G$ and let $\bP$ be an $\mathrm{Aut}(G)$-invariant probability measure supported on $\sT(G)$ ($\bE$ denotes the corresponding expectation); examples include the UST measure $\PUST$ and the MST measure $\PMST$. For $e,f\in E$, denote by $e\sim f$ if $e$ and $f$ are adjacent edges in $G$ (i.e., they share a common endpoint), and by $e\not\sim f$ if they are non-adjacent. Write $n=|V|$. For $x\in V$ and $\omega\in\{0,1\}^E$, let $\deg(x;\omega)$ denote the degree of $x$ in the subgraph $\big(V,\eta(\omega)\big)$. Then we have the equalities
	\be\label{eq:relation-1-ad}
	\sum_{e\sim f}\bP\big[\omega(e)=\omega(f)=1\big]=\sum_{x\in V}\bE\big[ \deg(x;\omega)^2 \big]-2(n-1)\,,
	\ee
	where the sum on the left-hand side runs over all ordered pairs of distinct adjacent edges $e,f$; and
	\be\label{eq:relation-2-nonad}
	\sum_{e\not\sim f}\bP\big[\omega(e)=\omega(f)=1\big]=n(n-1)-\sum_{x\in V}\bE\big[ \deg(x;\omega)^2 \big]\,,
	\ee
	where the sum on the left-hand side runs over all ordered non-adjacent pairs of edges.
\end{lemma}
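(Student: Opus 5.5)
Both identities come from pointwise counting, one spanning tree $\omega$ at a time, followed by taking expectation under $\bP$. No invariance is actually needed: the identities hold for any probability measure supported on $\sT(G)$. I assume $G$ is simple, as the term "adjacent edges" suggests; if $G$ has parallel edges, two parallel edges share both endpoints but cannot both lie in a tree, so they contribute nothing and the same count goes through.

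\textbf{Adjacent pairs.} Fix $\omega\in\sT(G)$. For each vertex $x$, the number of ordered pairs $(e,f)$ of distinct edges of $\omega$ that share the endpoint $x$ is $\deg(x;\omega)\big(\deg(x;\omega)-1\big)$. Two distinct edges of a tree share at most one endpoint. Summing over $x$ therefore counts every ordered adjacent pair in $\eta(\omega)$ exactly once:
\[
\sum_{e\sim f}\mathbf 1\{\omega(e)=\omega(f)=1\}=\sum_{x\in V}\deg(x;\omega)^2-\sum_{x\in V}\deg(x;\omega).
\]
Since $\omega$ is a spanning tree, the handshake lemma gives $\sum_x\deg(x;\omega)=2|\eta(\omega)|=2(n-1)$. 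Taking $\bE$ and using linearity yields \eqref{eq:relation-1-ad}.

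\textbf{Non-adjacent pairs.} The number of ordered pairs of distinct edges in $\eta(\omega)$ is $(n-1)(n-2)$. Each such pair is either adjacent or non-adjacent, so
\[
\sum_{e\not\sim f}\mathbf 1\{\omega(e)=\omega(f)=1\}=(n-1)(n-2)-\Big(\sum_x\deg(x;\omega)^2-2(n-1)\Big).
\]
Since $(n-1)(n-2)+2(n-1)=n(n-1)$, taking expectation gives \eqref{eq:relation-2-nonad}.

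There is no real obstacle here. The only point needing care is that in \eqref{eq:relation-2-nonad} the pair $(e,f)$ runs over distinct edges, so the diagonal $e=f$ is excluded. This exclusion is what makes the constants combine to exactly $n(n-1)$.
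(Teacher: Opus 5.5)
Your proof is correct and follows the paper's argument: you count pointwise on each spanning tree using $\sum_x \deg(x;\omega)\big(\deg(x;\omega)-1\big)$ and the handshake identity $\sum_x\deg(x;\omega)=2(n-1)$, then take expectations. The only cosmetic difference is in \eqref{eq:relation-2-nonad}: you subtract the adjacent count from the $(n-1)(n-2)$ ordered pairs of distinct tree edges, whereas the paper counts, for each $e\in\eta(\omega)$, the $n-\deg(e^-;\omega)-\deg(e^+;\omega)$ tree edges not adjacent to $e$ and sums over $e$; your remarks that invariance is never used and that parallel edges cause no trouble are also correct.
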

\begin{proof}
	First, since $\bP$ is supported on $\sT(G)$ and $|V|=n$, there are $n-1$ edges in $\eta(\omega)$ $\bP$-almost surely. Hence for $\bP$-almost every $\omega$,
	\[
	\sum_{x\in V}\deg(x;\omega)=2|\eta(\omega)|=2(n-1)\,.
	\]
	Second, for every configuration $\omega\in\Omega$,
	\[
	\sum_{e\sim f} \mathbf{1}_{ \{ \omega(e)=\omega(f)=1  \} } =\sum_{x\in V}\deg(x;\omega)\big[ \deg(x;\omega)-1 \big]
	=\sum_{x\in V}\deg(x;\omega)^2-\sum_{x\in V}\deg(x;\omega)\,.
	\]
	Combining the two observations, for $\bP$-almost every $\omega$,
	\[
	\sum_{e\sim f} \mathbf{1}_{ \{ \omega(e)=\omega(f)=1  \} } =\sum_{x\in V}\deg(x;\omega)^2-2(n-1)\,.
	\]
	Taking expectations on both sides yields \eqref{eq:relation-1-ad}.
	
	For \eqref{eq:relation-2-nonad}, observe that for $\bP$-almost every $\omega$ and an edge $e\in \eta(\omega)$, the number of edges $f\in\eta(\omega)\setminus\{e\}$ that are adjacent to $e=\{e^-,e^+\}$ is
	\[
	\deg(e^-;\omega)+\deg(e^+;\omega)-2\,,
	\]
	where $e^-$ and $e^+$ are the two endpoints of $e$. Hence the number of edges $f\in\eta(\omega)\setminus\{e\}$ that are not adjacent to $e$ equals
    \[
    |\eta(\omega)|-1-\big[\deg(e^-;\omega)+\deg(e^+;\omega)-2\big]=n-\deg(e^-;\omega)-\deg(e^+;\omega)\,.
    \]
    Therefore, for $\bP$-almost every $\omega$,
    \begin{align*}
    \sum_{e\not\sim f} \mathbf{1}_{ \{ \omega(e)=\omega(f)=1  \} } &=\sum_{e\in\eta(\omega)}\big[n-\deg(e^-;\omega)-\deg(e^+;\omega)\big]\\
    &=n\cdot |\eta(\omega)|-\sum_{e\in\eta(\omega)}\big[\deg(e^-;\omega)+\deg(e^+;\omega)\big]\\
    &=n(n-1)-\sum_{x\in V}\deg(x;\omega)^2\,.
    \end{align*}
    Taking expectations gives \eqref{eq:relation-2-nonad}.
\end{proof}

\begin{corollary}\label{cor:UST}
	With the notation of Lemma~\ref{lem:2nd-pNC},
	if $\bP$ has the p-NC property for adjacent edges, then we obtain an upper bound on the second moment:
	\[
	\sum_{x\in V}\bE\big[ \deg(x;\omega)^2 \big]\leq 2(n-1)+\sum_{e\sim f}\bP[\omega(e)=1]\cdot \bP[\omega(f)=1]\,.
	\]
	If $\bP$ has the p-NC property for non-adjacent edges, then we obtain a lower bound:
	\[
	\sum_{x\in V}\bE\big[ \deg(x;\omega)^2 \big]\geq  n(n-1)-\sum_{e\not\sim f}\bP[\omega(e)=1]\cdot \bP[\omega(f)=1]\,.
	\]
\end{corollary}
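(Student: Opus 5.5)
This corollary follows directly from Lemma~\ref{lem:2nd-pNC}: we substitute the p-NC inequality \eqref{eq:def-p-NC} term by term into the two identities \eqref{eq:relation-1-ad} and \eqref{eq:relation-2-nonad}. No new combinatorics is needed.

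For the upper bound, suppose $\bP$ has the p-NC property for adjacent edges. Then for every ordered pair $(e,f)$ of distinct adjacent edges we have $\bP[\omega(e)=\omega(f)=1]\le \bP[\omega(e)=1]\,\bP[\omega(f)=1]$. Summing over all such ordered pairs bounds the left-hand side of \eqref{eq:relation-1-ad} by $\sum_{e\sim f}\bP[\omega(e)=1]\,\bP[\omega(f)=1]$. The identity \eqref{eq:relation-1-ad} gives $\sum_{x}\bE[\deg(x;\omega)^2]=2(n-1)+\sum_{e\sim f}\bP[\omega(e)=\omega(f)=1]$. Inserting the bound yields the first displayed inequality.

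For the lower bound, suppose instead that the p-NC property holds for non-adjacent pairs. Summing \eqref{eq:def-p-NC} over ordered pairs $e\not\sim f$ gives
\[
\sum_{e\not\sim f}\bP[\omega(e)=\omega(f)=1]\le \sum_{e\not\sim f}\bP[\omega(e)=1]\,\bP[\omega(f)=1].
\]
Rearranging \eqref{eq:relation-2-nonad} as $\sum_x\bE[\deg(x;\omega)^2]=n(n-1)-\sum_{e\not\sim f}\bP[\omega(e)=\omega(f)=1]$ and using this bound gives the second inequality. The minus sign reverses the direction, so an upper bound on the joint probabilities becomes a lower bound on the second moment.

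There is essentially no obstacle. The only points to watch are that both sums run over the same index sets (ordered pairs) as in Lemma~\ref{lem:2nd-pNC}, and that in the non-adjacent case the reversal of the inequality from the sign in \eqref{eq:relation-2-nonad} is handled correctly.
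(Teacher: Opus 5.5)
Your proposal is correct and matches the paper's approach: the paper states this corollary without a separate proof, as an immediate consequence of Lemma~\ref{lem:2nd-pNC}. The intended argument is exactly yours: insert the p-NC inequality term by term into \eqref{eq:relation-1-ad} and \eqref{eq:relation-2-nonad}, with the minus sign in the second identity turning the upper bound on joint probabilities into a lower bound on the second moment.
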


As noted, Corollary~\ref{cor:UST} will play a key role in the proof of Theorem~\ref{thm:UST}. We now present Corollary~\ref{cor:1-2moment}, which is a specialization of Lemma~\ref{lem:2nd-pNC} to the complete graph $K_n$ under the MST or UST measure. In this setting, a simple first-moment calculation yields $p_0 = \frac{2}{n}$. Corollary~\ref{cor: reduction to second moment bound} then translates the desired p-NC inequality for non-adjacent edges into a lower bound on the second moment of the degree. Specifically, to prove Theorem~\ref{thm: NCMST} it suffices to show that for sufficiently large $n$,
\[
\mathbb{E}_{\mathrm{MST}}^{(K_n)}\big[\deg(x;\omega)^2\big] > 5.
\]
This lower bound will be established in Proposition~\ref{prop: lower bound on N_n^2} via a local limit approach.

\begin{corollary}\label{cor:1-2moment}
	Let $\bbP$ be either $\PMST$ or $\PUST$ on $K_n$, and let $\bbE$ be the corresponding expectation. Let $e$ and $f$ be adjacent edges, and let $e$ and $e'$ be non-adjacent edges. Set
	\[
	p_0 = \bbP[\omega(e)=1],\quad
	p_1 = \bbP[\omega(e)=\omega(f)=1],\quad
	p_2 = \bbP[\omega(e)=\omega(e')=1].
	\]
	Then
	\[
	p_0 = \frac{2}{n},\qquad
	p_1 = \frac{\bbE\big[\sum_{x\in V(K_n)}\deg(x;\omega)^2\big]-2(n-1)}{n(n-1)(n-2)},
	\]
	and
	\[
	p_2 = \frac{4\big[n(n-1)-\bbE\big(\sum_{x\in V(K_n)}\deg(x;\omega)^2\big)\big]}{n(n-1)(n-2)(n-3)}.
	\]
\end{corollary}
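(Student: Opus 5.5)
The plan is to combine the symmetry of $K_n$ with the two identities of Lemma~\ref{lem:2nd-pNC}. The first step is to check that both $\PMST$ and $\PUST$ on $K_n$ are $\mathrm{Aut}(K_n)$-invariant. For $\PUST$ this is immediate. For $\PMST$ it holds because the edge weights are i.i.d.\ continuous, so their law is invariant under any permutation of the edges induced by a vertex permutation. Since $\mathrm{Aut}(K_n)=S_n$ acts transitively on edges, $\bbP[\omega(e)=1]$ does not depend on $e$. It also acts transitively on ordered pairs of distinct adjacent edges, and on ordered pairs of non-adjacent edges. Hence $p_0,p_1,p_2$ are well defined, i.e.\ they do not depend on the choice of $e,f,e'$.

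For $p_0$, I would do a first-moment count. Almost surely $|\eta(\omega)|=n-1$, so taking expectations gives $\sum_{e\in E}\bbP[\omega(e)=1]=n-1$. Thus $\binom n2 p_0=n-1$, which gives $p_0=2/n$.

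For $p_1$ and $p_2$, I would count ordered pairs.
\begin{itemize}
\item Ordered pairs of distinct adjacent edges: choose the common vertex in $n$ ways and then an ordered pair of its other endpoints in $(n-1)(n-2)$ ways. This gives $n(n-1)(n-2)$ pairs.
\item Ordered non-adjacent pairs: choose $e$ in $\binom n2$ ways and then $f$ among the $\binom{n-2}2$ edges avoiding both endpoints of $e$. This gives $\binom n2\binom{n-2}{2}=n(n-1)(n-2)(n-3)/4$ pairs.
\end{itemize}
By transitivity, the left-hand side of \eqref{eq:relation-1-ad} equals $n(n-1)(n-2)\,p_1$. The left-hand side of \eqref{eq:relation-2-nonad} equals $\tfrac14 n(n-1)(n-2)(n-3)\,p_2$. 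Dividing through gives the stated formulas, valid for $n\ge 3$ and $n\ge 4$ respectively. Linearity lets me write $\sum_x\bbE[\deg^2]$ as $\bbE[\sum_x\deg^2]$.

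There is no real obstacle here. The only points needing care are the invariance of the MST measure under automorphisms, which holds since the weights are i.i.d., and getting the count of non-adjacent ordered pairs right, in particular the factor $4$.
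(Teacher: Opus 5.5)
Your proposal is correct and follows the paper's route. The paper's derivation is brief and only implied: $p_0=2/n$ from edge-transitivity plus $|\eta(\omega)|=n-1$, and $p_1,p_2$ by specializing \eqref{eq:relation-1-ad} and \eqref{eq:relation-2-nonad} of Lemma~\ref{lem:2nd-pNC} to $K_n$. Your justification of $\mathrm{Aut}(K_n)$-invariance for $\PMST$ and your pair counts $n(n-1)(n-2)$ and $n(n-1)(n-2)(n-3)/4$ are exactly what that specialization needs.
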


\begin{corollary}\label{cor: reduction to second moment bound}
	With the notation of Corollary~\ref{cor:1-2moment}, let $x$ be an arbitrary vertex in $K_n$. Then 
	\[
	p_1 \le p_0^2 \quad\Longleftrightarrow\quad
	\bbE\big[\deg(x;\omega)^2\big] \le 6-\frac{14}{n}+\frac{8}{n^2},
	\]
	and
	\[
	p_2 \le p_0^2 \quad\Longleftrightarrow\quad
	\bbE\big[\deg(x;\omega)^2\big] \ge 5-\frac{11}{n}+\frac{6}{n^2}.
	\]
	Moreover, $p_2 = p_0^2$ if and only if $\bbE[\deg(x;\omega)^2] = 5-\frac{11}{n}+\frac{6}{n^2}$.
\end{corollary}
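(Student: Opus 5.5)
The plan is to substitute the formulas of Corollary~\ref{cor:1-2moment} into the two inequalities and reduce everything to the quantity $m\coloneq \bbE[\deg(x;\omega)^2]$.

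\textbf{Step 1: express the sum through $m$.} Both $\PMST$ and $\PUST$ on $K_n$ are invariant under $\mathrm{Aut}(K_n)$, which acts transitively on vertices. Hence $\bbE[\deg(x;\omega)^2]$ does not depend on $x$, and
\[
\bbE\Big[\sum_{x}\deg(x;\omega)^2\Big]=nm.
\]

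\textbf{Step 2: the adjacent case.} By Corollary~\ref{cor:1-2moment},
\[
p_1=\frac{nm-2(n-1)}{n(n-1)(n-2)}\qquad\text{and}\qquad p_0^2=\frac{4}{n^2}.
\]
The denominator $n(n-1)(n-2)$ is positive for $n\ge 3$, so $p_1\le p_0^2$ is equivalent to
\[
nm-2(n-1)\le \frac{4(n-1)(n-2)}{n}.
\]
Dividing by $n$ gives
\[
m\le \frac{2(n-1)}{n}+\frac{4(n-1)(n-2)}{n^2}=\frac{2n^2-2n+4n^2-12n+8}{n^2}=6-\frac{14}{n}+\frac{8}{n^2}.
\]

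\textbf{Step 3: the non-adjacent case.} Again by Corollary~\ref{cor:1-2moment},
\[
p_2=\frac{4\big[n(n-1)-nm\big]}{n(n-1)(n-2)(n-3)}.
\]
For $n\ge 4$, so that non-adjacent edges exist, the denominator is positive. Then $p_2\le 4/n^2$ is equivalent to
\[
n(n-1)-nm\le \frac{(n-1)(n-2)(n-3)}{n},
\]
which rearranges to
\[
m\ge (n-1)-\frac{(n-1)(n-2)(n-3)}{n^2}.
\]
Expanding, $(n-1)(n-2)(n-3)=n^3-6n^2+11n-6$. Therefore
\[
m\ge n-1-n+6-\frac{11}{n}+\frac{6}{n^2}=5-\frac{11}{n}+\frac{6}{n^2}.
\]

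\textbf{Step 4: the equality case.} The transformations in Step 3 are strict-sign-preserving equivalences: multiplying by positive quantities and moving terms across. Replacing $\le$ by $=$ throughout therefore gives $p_2=p_0^2$ if and only if $m=5-\frac{11}{n}+\frac{6}{n^2}$.

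No step here is a real obstacle. The only care needed is to note the ranges $n\ge3$ and $n\ge4$ that make the denominators positive, and to get the polynomial expansion right.
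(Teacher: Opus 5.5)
Your proposal is correct and follows the same route as the paper, which states this corollary without a separate proof as a direct consequence of Corollary~\ref{cor:1-2moment}: use vertex-transitivity to write $\bbE[\sum_x \deg(x;\omega)^2]=n\,\bbE[\deg(x;\omega)^2]$, substitute into the formulas for $p_1,p_2$, and compare with $p_0^2=4/n^2$. Your algebra, including the expansion $(n-1)(n-2)(n-3)=n^3-6n^2+11n-6$ and the equality case, checks out.
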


\begin{remark*}	
	Analogues of Corollaries~\ref{cor:1-2moment} and \ref{cor: reduction to second moment bound}  remain valid for complete bipartite graphs \(K_{n,n}\). In particular, the p-NC property for non-adjacent edges in \(K_{n,n}\) is equivalent to an analogous lower bound on the second moment of vertex degrees: 
	\[
		\bbE[\deg(x;\omega)^2] \geq  5-\frac{13}{2n}+\frac{3}{n^2}-\frac{1}{2n^3}\,,
	\]
	 which enables the large-$n$ result in Remark~\ref{rem:knn} via the same local limit approach.
\end{remark*}

\section{The p-NC property for MST on complete graphs}\label{sec:pf-MST}
This section establishes the p-NC property for non-adjacent edges in the minimum spanning tree (MST) on large complete graphs. We first review key definitions and background, then compute the second moment of the root degree in the wired minimum spanning forest (WMSF) on the Poisson weighted infinite tree (PWIT), and finally complete the proof of Theorem~\ref{thm: NCMST}.

\subsection{Preliminaries}\label{sec:MST_background}

\subsubsection{Minimal spanning trees and wired minimal spanning forests}\label{sec:2.1}
We briefly review the definitions and basic facts about MSTs and wired (and free) minimal spanning forests; further background can be found in \cite{Lyons_Peres_Schramm2006minimal_spanning_forests} or Chapter 11 of \cite{LP2016}. 

\begin{definition}\label{def:MST}
	The \emph{minimal spanning tree} $\MST$ on a finite, connected graph $G=(V,E)$ is defined as follows. Sample i.i.d.~random variables $\{W(e)\colon e\in E \}$ with uniform $(0,1)$ distribution. Let $\sT(G)$ be the set of spanning trees of $G$; the minimal spanning tree $\MST(G)$ is the unique spanning tree that minimizes the total weight (sum of edge weights) among all trees in $\sT(G)$. Denote by $\PMST=\PMST^{(G)}$ and $\EMST=\EMST^{(G)}$ the law and expectation of $\MST(G)$. 
\end{definition}

A classical algorithm for sampling MSTs is Prim's algorithm \cite{Prim1957shortest}. It relies on the following facts: for a finite, connected $G$ with a weight function $W:E\to(0,\infty)$ such that all edge weights are distinct, the associated $\MST(G;W)$ (the unique tree minimizing total weight) satisfies that an edge $e$ is absent from $\MST(G;W)$ if and only if it has the maximal weight on some cycle. Prim's algorithm starts with an arbitrary vertex $u$, sets $T_1=\{u\}$, and then repeatedly adds the lowest-weight edge connecting the current tree to its complement, producing $\MST(G;W)$.

If $G=(V,E)$ is a locally finite, infinite, connected graph with an injective weight function $W:E\to(0,\infty)$, we obtain the \emph{free minimal spanning forest} $\FMSF(G;W)$ by deleting every edge that is maximal on some cycle. When the weights are i.i.d. uniform $(0,1)$, the resulting random forest is called the \textit{free minimal spanning forest} (FMSF) of $G$; it arises as the weak limit of MSTs on an exhaustion of $G$.

For such an infinite graph, an \textit{extended cycle} is either a usual cycle or the union of two disjoint infinite simple paths emanating from a single vertex. Deleting every edge that is maximal on some extended cycle yields a spanning forest $\WMSF(G;W)$. With i.i.d. uniform $(0,1)$ weights, the resulting random forest is the \textit{wired minimal spanning forest} (WMSF) of $G$. The WMSF has the same law as the weak limit of MSTs on an exhaustion of $G$ with \textit{wired boundary conditions} (identifying all vertices outside the exhaustion and deleting loops while keeping parallel edges).

\subsubsection{Local limit}
The concept of local limit of finite graphs was introduced by Benjamini and Schramm \cite{Benjamini_Schramm2001local_limit}. We give only the necessary definitions; see \cite{AL2007} for further background.

\begin{definition}\label{def:rooted-graph}
	A \textbf{rooted} graph $(G,\rho)$ consists of a locally finite graph $G=(V,E)$ and a distinguished vertex $\rho\in V$. Two rooted graphs are isomorphic if there is a graph isomorphism that maps one root to the other. Let $\Gb$ be the space of rooted graphs up to isomorphism; we always work with equivalence classes in $\Gb$.
\end{definition}

\begin{definition}\label{def:metric-rooted-graph}
	Define $\dist\colon \Gb\times \Gb\to[0,1]$ by 
	\[
	\dist\big[(G_1,\rho_1),(G_2,\rho_2)\big]\coloneq  \frac{1}{1+\alpha}\,,
	\]
	where $\alpha$ is the supremum of radii $r>0$ such that the rooted balls $(B_{G_1}(\rho_1,r),\rho_1)$ and $(B_{G_2}(\rho_2,r),\rho_2)$ are isomorphic. Here $B_G(\rho,r)$ is the subgraph induced by vertices at graph distance $\le r$ from $\rho$. This metric makes $(\Gb,\dist)$ a Polish space \cite{AL2007}.
\end{definition}

\begin{definition}\label{def:local-limit}
	Let $(G_n)_{n\ge1}$ be a sequence of locally finite, connected graphs (possibly random). Choose $\rho_n$ uniformly from $V(G_n)$, independently of each other. We say a random element $(G,\rho)\in \Gb$ is the \textbf{local limit} of $(G_n)_{n\ge1}$ if for every integer $r>0$, the rooted balls $(B_{G_n}(\rho_n,r),\rho_n)$ converge in distribution to $(B_G(\rho,r),\rho)$. 
\end{definition}

\subsubsection{The WMSF on the PWIT}
We briefly review the WMSF on the Poisson weighted infinite tree (PWIT); see \cite{Addario2013local_limit_MST_complete_graphs,Nachmias_Tang2024} for details.

The PWIT is a rooted weighted tree $(\PWIT,\emptyset,W)$ built on the Ulam-Harris tree $\PWIT$ (the canonical infinite rooted tree) with root $\emptyset$ (the empty sequence). Its vertex set is $V(\PWIT)=\bigcup_{k\ge0}\bbN^k$ (with $\bbN^0=\emptyset$), and each vertex $v=(n_1,\ldots,n_k)\in\bbN^k$ ($k\ge1$) is connected to its parent $(n_1,\ldots,n_{k-1})$. For every vertex $v$, the edges to its children are assigned weights $\{w_i\}_{i\ge1}$ that are the atoms of an independent homogeneous rate-$1$ Poisson point process on $[0,\infty)$. The processes for different vertices are independent; let $W=\big\{W(e)\colon e\in E(\PWIT) \big\}$ denote the collection of all edge weights.

The WMSF on the PWIT is defined by the edge-deletion rule for WMSFs: an edge is deleted if and only if its weight is maximal on some extended cycle. An equivalent constructive description uses invasion percolation: for each $u\in V(\PWIT)$, let $T^{(u)}$ be the invasion percolation cluster of $u$ (starting with $\{u\}$ and repeatedly adding the lowest-weight incident edge not yet in the cluster; this mimics Prim's algorithm in the infinite setting). The WMSF on the PWIT has the same law as the union of invasion percolation clusters $\bigcup_{u\in V(\PWIT)} T^{(u)}$ \cite[Proposition 3.3]{Lyons_Peres_Schramm2006minimal_spanning_forests}. 

Denote by $\WP$ the connected component of the root $\emptyset$ in this WMSF. The rooted version $(\WP,\emptyset)$ is the local limit of MSTs on complete graphs, and more generally on any sequence of finite regular graphs whose degrees tend to infinity \cite[Theorem 5.4]{Aldous_Steele2004objective_method} or \cite[Theorem 6.1]{Nachmias_Tang2024}. We now recall several facts from Section 2 of \cite{Nachmias_Tang2024}, which were adapted from \cite{Addario-Berry_etal2012invasion_PWIT} and \cite{Addario2013local_limit_MST_complete_graphs}. Almost surely, the component $M$ is a one-ended infinite tree and is the union of the invasion tree $T^{(\emptyset)}$ together with finite trees $M_v$ attached to its vertices. These finite trees $M_v$ can be sampled from a so-called Poisson Galton--Watson aggregation process (Definition~2.4 in \cite{Nachmias_Tang2024}). On the other hand, the invasion tree $T^{(\emptyset)}$ itself is a one-ended infinite tree and can be decomposed into finite ponds connected by outlets. Given $T^{(\emptyset)}$ together with the weights on its edges, the decomposition proceeds as follows:
\begin{itemize}
	\item Let $e_1=(R_1,S_1)$ be the edge with the maximal weight in $T^{(\emptyset)}$, where $R_1$ is the endpoint of $e_1$ that is closer to $\emptyset$ in $T^{(\emptyset)}$. Let $P_1$ be the component of $\emptyset$ in the forest obtained from $T^{(\emptyset)}$ by deleting $e_1$. The edge $e_1$ is called the \emph{first outlet} and the finite tree $P_1$ the \emph{first pond}.
	\item Define the remaining outlets and ponds recursively. Given $e_i=(R_i,S_i)$ for some $i\ge1$, let $e_{i+1}=(R_{i+1},S_{i+1})$ be the edge with the maximal weight on the infinite component of the forest obtained from $T^{(\emptyset)}$ by deleting $e_i$, and let $R_{i+1}$ be the endpoint closer to $\emptyset$. Let $P_{i+1}$ be the finite component containing $R_{i+1}$ in the forest obtained from $T^{(\emptyset)}$ by deleting $e_i$ and $e_{i+1}$. Then $e_{i+1}$ is the $(i+1)$-th outlet and $P_{i+1}$ the $(i+1)$-th pond.
\end{itemize}

For $\lambda>0$, let $\mathrm{PGW}(\lambda)$ denote a Galton-Watson tree with Poisson$(\lambda)$ offspring distribution (starting from one particle), and let $\theta(\lambda)\coloneq\bbP[|\mathrm{PGW}(\lambda)|=\infty]$ be its survival probability. Then (see e.g., Lemma~2.8 in \cite{Nachmias_Tang2024}) $\theta(\lambda)>0$ iff $\lambda>1$, and for $\lambda>1$,
\be\label{eq:theta-lambda-id}
1-\theta(\lambda)=e^{-\lambda\theta(\lambda)}.
\ee

Let $N$ be the degree of $\emptyset$ in $\WP$. Write $D_1$ for the degree of $\emptyset$ inside the first pond of $T^{(\emptyset)}$, $D_2$ for its degree in $M_\emptyset$ (the finite trees attached directly to $\emptyset$ outside the pond), and $D_3=\mathbf{1}_{\emptyset=R_1}$, where $R_1$ is the endpoint of the first outlet closer to $\emptyset$. Then
\be\label{eq:ND}
N = D_1 + D_2 + D_3.
\ee
Let $X_1$ be the weight of the first outlet $e_1=(R_1,S_1)$ and $Z_1$ the number of vertices in the first pond $P_1$. The following proposition collects some facts from Section 2 of \cite{Nachmias_Tang2024} (see also \cite{Addario-Berry_etal2012invasion_PWIT,Addario2013local_limit_MST_complete_graphs}).

\begin{proposition}\label{prop:deg-root-M}
	\begin{enumerate}
		\item The weight $X_1$ has the distribution of $\theta^{-1}(F)$, where $F\sim\mathrm{Uniform}(0,1)$.
		\item Conditioned on $X_1$, $D_1$, $D_2$ and $D_3$ are independent.
		\item Conditioned on $X_1=\lambda>1$, $Z_1$ has distribution
		\[
		\bbP[Z_1=m\mid X_1=\lambda] = \frac{\theta(\lambda)}{\theta'(\lambda)}\cdot \frac{e^{-\lambda m}(\lambda m)^{m-1}}{(m-1)!},\qquad m=1,2,\dots
		\]
		\item Conditioned on $X_1$ and $Z_1$, the first pond is uniformly distributed among all labelled trees with $Z_1$ vertices, and the root $\emptyset$ is uniformly distributed among its vertices.
		\item Conditioned on $X_1=\lambda$ and $Z_1$, $D_2$ is Poisson with mean $\alpha(\lambda)\coloneq  \int_{\lambda}^{\infty}(1-\theta(x))\,dx$. Hence
		\[
		\bbE[D_2\mid X_1,Z_1] = \alpha(X_1),\qquad \bbE[D_2^2\mid X_1,Z_1] = \alpha(X_1)^2+\alpha(X_1).
		\]
		\item Conditioned on $X_1$ and $Z_1$, $D_3$ is Bernoulli with
		\[
		\bbE[D_3\mid X_1,Z_1] = \bbE[D_3^2\mid X_1,Z_1] = \frac{1}{Z_1}.
		\]
	\end{enumerate}
\end{proposition}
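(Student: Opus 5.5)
The plan is to read each item off the invasion-percolation description of $\WP$, together with the standard coupling between the PWIT restricted to edges of weight $<\lambda$ and $\mathrm{PGW}(\lambda)$. For each $\lambda>0$, the cluster of $\emptyset$ in $\{e\colon W(e)<\lambda\}$ is a $\mathrm{PGW}(\lambda)$ tree. This holds because the children weights at each vertex form a rate-$1$ Poisson process, so the number of them below $\lambda$ is Poisson$(\lambda)$, independently over vertices. I will use two facts about this tree. First, its total progeny law is Borel: $\bbP[|\mathrm{PGW}(\lambda)|=m]=e^{-\lambda m}(\lambda m)^{m-1}/m!$. Second, conditioned on $|\mathrm{PGW}(\lambda)|=m$, the tree with a uniform relabelling is a uniform labelled tree on $m$ vertices, and the root is uniform by exchangeability.

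\textbf{Step 1 (item 1).} Invasion from $\emptyset$ eventually uses only edges of weight $\le X_1$, where $X_1=\max_{e\in T^{(\emptyset)}}W(e)$. The event $X_1<\lambda$ occurs exactly when $\emptyset$ lies in an infinite $\lambda$-cluster. Hence $\bbP[X_1<\lambda]=\theta(\lambda)$, which gives $X_1\overset{d}{=}\theta^{-1}(F)$.

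\textbf{Step 2 (items 3, 4, 6).} On $\{X_1\in d\lambda\}$, the first pond $P_1$ is the (finite) cluster of $\emptyset$ at level $\lambda$. Exactly one of its vertices $R_1$ has a boundary edge of weight in $d\lambda$, and the far endpoint of that edge starts an infinite $\lambda$-cluster. I will compute the joint law heuristically and then justify it through events of the form $X_1\in[\lambda,\lambda+\varepsilon)$:
\[
\bbP[Z_1=m,\,X_1\in d\lambda]=\frac{e^{-\lambda m}(\lambda m)^{m-1}}{m!}\cdot m\cdot\theta(\lambda)\,d\lambda .
\]
The factors are as follows. The first is the Borel probability that the cluster has size $m$. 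The factor $m$ counts the choices of boundary vertex, each carrying a Poisson edge with weight in $d\lambda$. The factor $\theta(\lambda)$ is the probability that the subtree beyond that edge survives at level $\lambda$. Summing over $m$ gives the density $\theta'(\lambda)$. Dividing by it yields item 3. The same computation shows that, given $Z_1=m$, the pond is a size-conditioned $\mathrm{PGW}(\lambda)$ tree, which gives the uniform-tree statement of item 4. It also shows that $R_1$ is uniform among the $m$ pond vertices independently of the tree shape, so $\bbP[\emptyset=R_1\mid X_1,Z_1]=1/Z_1$, which is item 6.

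\textbf{Step 3 (items 5 and 2).} Consider an edge at $\emptyset$ with weight $x>\lambda=X_1$, other than the outlet. By the extended-cycle rule it belongs to the WMSF iff it is not maximal on a path to infinity through it. Since $\emptyset$ already connects to infinity through edges of weight $\le\lambda<x$, this means the edge survives exactly when the subtree on its far side has no infinite cluster at level $x$. That event has probability $1-\theta(x)$ and is determined by the independent PWIT hanging off that child. Independent thinning of the rate-$1$ Poisson process on $(\lambda,\infty)$ then makes $D_2$ Poisson with mean $\int_\lambda^\infty(1-\theta(x))\,dx=\alpha(\lambda)$, and the moment formulas follow. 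Item 2 follows because, given $X_1$:
\begin{itemize}
\item $D_1$ and $D_3$ are functions of the pond and the position of $R_1$;
\item $D_2$ is a function of the root's Poisson atoms above $\lambda$ and their independent subtrees;
\item the pond-plus-outlet data and these subtrees use disjoint parts of the PWIT;
\item by Step 2, $D_3$ is independent of the pond shape given $Z_1$.
\end{itemize}

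\textbf{Main obstacle.} I expect the hardest part to be making the conditioning on the null event $\{X_1=\lambda\}$ rigorous, and in particular verifying the independence claims of item 2. I would handle this by working with $\varepsilon$-windows and a Palm/Mecke formula for the Poisson boundary edges, following the approach of Addario-Berry et al.\ as presented in Section~2 of \cite{Nachmias_Tang2024}. A second point needing care is checking that no root edge of weight above $X_1$ lies in $T^{(\emptyset)}$ itself; this holds because every invasion edge has weight $\le X_1$.
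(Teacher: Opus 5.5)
\textbf{Overall.} The paper gives no argument of its own for this proposition; it imports items 1--6 from Section~2 of \cite{Nachmias_Tang2024} (going back to \cite{Addario-Berry_etal2012invasion_PWIT}). Your reconstruction is the standard derivation behind those facts. It uses the $\mathrm{PGW}(\lambda)$ coupling, identifies $P_1$ with the finite cluster of $\emptyset$ at level $X_1$, derives the first-order formula $\bbP[Z_1=m,\,X_1\in d\lambda]=m\,\theta(\lambda)\,\bbP[|\mathrm{PGW}(\lambda)|=m]\,d\lambda$, and thins the root's atoms above $X_1$. It handles items 1 and 3--6 correctly, up to the $\varepsilon$-window bookkeeping you already flag.

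\textbf{The gap is item 2.} Your last bullet conditions on $Z_1$. So your argument actually establishes two things: $D_1,D_2,D_3$ are independent given $(X_1,Z_1)$, and $D_2$ is independent of $(D_1,D_3)$ given $X_1$ alone. You cannot drop $Z_1$ for the pair $(D_1,D_3)$; in fact item 2 as literally written is false.
\begin{itemize}
\item $D_1=0$ forces the pond to be $\{\emptyset\}$, i.e.\ $Z_1=1$, and then $R_1=\emptyset$. Hence $\{D_1=0\}\subseteq\{D_3=1\}$.
\item By items 3 and 6, $\bbP[D_1=0\mid X_1=\lambda]=\frac{\theta(\lambda)}{\theta'(\lambda)}e^{-\lambda}>0$ and $\bbP[D_3=0\mid X_1=\lambda]=1-\bbE[1/Z_1\mid X_1=\lambda]>0$.
\item So $\bbP[D_1=0,D_3=0\mid X_1]=0$ while the product of the marginals is positive. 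Quantitatively, $\bbE[D_1D_3\mid X_1]-\bbE[D_1\mid X_1]\,\bbE[D_3\mid X_1]=-2\,\mathrm{Var}(1/Z_1\mid X_1)<0$.
\end{itemize}
So ``item 2 follows'' is a non sequitur. You should state item 2 with conditioning on $X_1$ and $Z_1$. That is exactly what your bullets prove, and it is also what the proof of Lemma~\ref{lem:N^2} actually invokes (``the conditional independence of $D_1,D_2,D_3$ given $X_1,Z_1$''), so nothing downstream breaks.

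\textbf{A smaller point in item 4.} Exchangeability of the labels only shows that the root's \emph{label} is uniform. It does not show that, given the unrooted labelled tree, the root is a uniform vertex: always rooting at a uniformly chosen center vertex is also label-exchangeable. What you need is the Poisson computation that every rooted labelled tree on $[m]$ has the same probability $e^{-\lambda m}\lambda^{m-1}/m!$ under uniformly labelled $\mathrm{PGW}(\lambda)$. Hence the size-conditioned tree is uniform on the $m^{m-1}$ rooted labelled trees, which gives both halves of item 4 at once.
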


\subsection{Proof of Theorem~\ref{thm: NCMST}}

As mentioned earlier, by Corollary~\ref{cor: reduction to second moment bound}, to prove Theorem~\ref{thm: NCMST} it suffices to establish the following proposition.
\begin{proposition}\label{prop: lower bound on N_n^2}
	There exists a constant $C>0$ such that for all $n\ge C$,
	\[
	\EMST^{(K_n)}\big[\deg(x;\omega)^2\big] > 5,
	\]
	where $x$ is any vertex of $K_n$.
\end{proposition}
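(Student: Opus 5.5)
The plan is to pass to the local limit. Since the rooted $\MST$ of $K_n$ converges locally to $(\WP,\emptyset)$, the degree $\deg(x;\omega)$ of a fixed vertex (equivalently, by vertex transitivity, of a uniform vertex) converges in distribution to $N$, the degree of $\emptyset$ in $\WP$. Fatou's lemma (applied through Skorokhod representation, or via $\bbE[\min(\deg^2,K)]\to\bbE[\min(N^2,K)]$ and then $K\to\infty$) gives
\[
\liminf_{n\to\infty}\EMST^{(K_n)}\big[\deg(x;\omega)^2\big]\ \ge\ \bbE[N^2].
\]
So it suffices to prove $\bbE[N^2]>5$ strictly. The constant $C$ then exists because a strict inequality for the liminf forces the $n$-th term above $5$ eventually.

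To compute $\bbE[N^2]$, I would use \eqref{eq:ND}, $N=D_1+D_2+D_3$, and condition on $X_1=\lambda$ and $Z_1=m$. By Proposition~\ref{prop:deg-root-M}, $D_1,D_2,D_3$ are conditionally independent given $X_1$. I would need this given $(X_1,Z_1)$ as well, or at least that $D_2$ is independent of $(D_1,D_3)$ given $(X_1,Z_1)$. For $D_1$ and $D_3$, both are determined by the pond, so I would compute $\bbE[D_1D_3\mid \cdot]$ directly. $D_1$ is the degree of a uniform vertex in a uniform labelled tree on $m$ vertices, which is $1+\mathrm{Bin}(m-2,1/m)$. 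Hence $\bbE[D_1\mid m]=2-2/m$ and $\bbE[D_1^2\mid m]$ is explicit. For the correlation with $D_3=\mathbf 1_{\emptyset=R_1}$, I would use the fact that $R_1$ is uniform and independent of the uniform tree. This gives $\bbE[D_1D_3\mid m]=\frac1m(2-2/m)$, since the expected degree of any fixed vertex, averaged over labels, is $2-2/m$. $D_2$ is Poisson$(\alpha(\lambda))$, so its moments are as listed. Expanding the square gives $\bbE[N^2\mid\lambda,m]$ as an explicit rational function of $m$ plus polynomial terms in $\alpha(\lambda)$.

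Next I would average over $m$ using the Borel-type law in Proposition~\ref{prop:deg-root-M}(3). The needed quantities are $\bbE[1/Z_1\mid\lambda]$ and $\bbE[1/Z_1^2\mid\lambda]$. These sum in closed form via Lambert-$W$/Borel identities together with $1-\theta=e^{-\lambda\theta}$ from \eqref{eq:theta-lambda-id}. For example, $\sum_m e^{-\lambda m}(\lambda m)^{m-1}/m!=\theta(\lambda)/\lambda$ for $\lambda>1$ in the supercritical branch, where the dual parameter appears. Finally I would integrate over $\lambda$ with density $\theta'(\lambda)$, substituting $F=\theta(\lambda)$, so the outer integral is over $F\in(0,1)$ with $\lambda=-\log(1-F)/F$. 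The term $\alpha(\lambda)$ also becomes an explicit integral in this parametrization.

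The main obstacle is this last evaluation: obtaining a number (the paper says about $5.19$) with a rigorous margin above $5$. I would first try to reduce everything to one-dimensional integrals in $F$ with elementary integrands. If exact evaluation fails, I would bound them rigorously, for instance by monotone lower bounds on the integrands or by truncating the series in $m$ and keeping only nonnegative terms. This would certify $\bbE[N^2]\ge 5.1$. A secondary check is the conditional independence of $D_2$ from the pond given $(X_1,Z_1)$, which I would extract from the construction in \cite{Nachmias_Tang2024}.
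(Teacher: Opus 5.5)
Your reduction matches the paper's: local convergence to $(\WP,\emptyset)$ plus Fatou gives $\liminf_n\EMST^{(K_n)}[\deg(x;\omega)^2]\ge\bbE[N^2]$. Your conditional bookkeeping, with $\bbE[D_1^2\mid Z_1=m]=5-\frac{11}{m}+\frac{6}{m^2}$ and $\bbE[D_1D_3\mid Z_1=m]=\frac1m(2-\frac2m)$, also reproduces the paper's formula $\bbE[N^2\mid X_1,Z_1]=5-\frac{6}{Z_1}+\frac{2}{Z_1^2}+\alpha(X_1)^2+5\alpha(X_1)-\frac{2\alpha(X_1)}{Z_1}$.

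\textbf{The gap.} The only non-routine content of the proposition is the strict inequality $\bbE[N^2]>5$, and you never establish it. You call it the ``main obstacle,'' take the value $5.19$ from the introduction, and assert that rigorous numerics ``would certify'' $\bbE[N^2]\ge 5.1$ without carrying them out. Done naively, that requires rigorous quadrature of iterated integrals involving $\alpha(\lambda)=\int_\lambda^\infty(1-\theta)$ against $\theta'$, with $\theta$ defined only implicitly. This is possible in principle, but as written it is not a proof.

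Your sample identity is also wrong. In fact $\sum_{m\ge1}e^{-\lambda m}(\lambda m)^{m-1}/m!=1-\theta(\lambda)$, which is \eqref{eq:theta-expansion}; it equals $\mu/\lambda$ with $\mu=\lambda(1-\theta(\lambda))$ the dual parameter, not $\theta(\lambda)/\lambda$. For instance, as $\lambda\downarrow 1$ the left side tends to the total Borel mass $1$, while $\theta(\lambda)/\lambda\to 0$. Using your version would give the wrong value of $\bbE[1/Z_1\mid X_1=\lambda]$, which is actually $\theta(1-\theta)/\theta'$.

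\textbf{How the paper closes it.} Two exact observations make positivity immediate, with no numerics.

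First, $X_1$ has density $\theta'$ on $(1,\infty)$, $\alpha'=-(1-\theta)$, and $\theta(1)=0=\alpha(\infty)$. Integration by parts then gives $\int_1^\infty\alpha\theta'=\int_1^\infty\theta(1-\theta)$ and $\int_1^\infty\alpha^2\theta'=2\int_1^\infty\theta(1-\theta)\alpha$. So every $\alpha$-term cancels and $\bbE[N^2]=5-I_1+2I_2$, where $I_1=\int_1^\infty\theta(1-\theta)\,d\lambda$ and $I_2=\int_1^\infty\theta\beta\,d\lambda$. Here $\beta(\lambda)=\bbE[1/|\mathrm{PGW}(\lambda)|]$ arises from $\bbE[1/Z_1^2\mid X_1=\lambda]=\theta\beta/\theta'$.

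Second, set $q=1-\theta$ and $z=\lambda e^{-\lambda}$. Then $q=T(z)/\lambda$ and $\beta=U(z)/\lambda$, where $T$ and $U$ are the exponential generating functions of rooted and unrooted labelled trees. The identity $U=T-T^2/2$ yields $\beta=q-\lambda q^2/2$, hence $2I_2-I_1=\int_1^\infty\theta\,q(1-\lambda q)\,d\lambda$. Differentiating $\log q=-\lambda(1-q)$ gives $q'=-q(1-q)/(1-\lambda q)$, and $q'<0$ forces $\lambda q<1$. So the integrand is strictly positive and $\bbE[N^2]>5$ follows; the exact value $10-4\zeta(3)$ is a bonus.

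With these two steps inserted, your outline becomes a complete proof.
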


A key ingredient for Proposition~\ref{prop: lower bound on N_n^2} is the following local limit result.
\begin{proposition}\label{prop: N_n to N}
	Let $\omega_n = \MST(K_n)$ be independent samples, let $\rho_n$ be a vertex chosen uniformly from $V(K_n)$ (independent of the $\omega_n$), and set $N_n = \deg(\rho_n;\omega_n)$. Let $N$ be the degree of the root $\emptyset$ in the WMSF on the PWIT. Then
\[
\liminf_{n\to\infty} \EMST^{(K_n)}\big[\deg(x;\omega)^2\big] = \liminf_{n\to\infty} \bE_n(N_n^2) \ge \bbE(N^2)\,,
\]
where $\bE_n$ denotes expectation with respect to the joint law of $\omega_n$ and $\rho_n$.
\end{proposition}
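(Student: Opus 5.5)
The argument has three steps: an exact identity from symmetry, convergence in distribution of $N_n$ to $N$ via the local limit, and Fatou's lemma.

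\textbf{Step 1 (the equality).} The first equality is exact for every $n$. Vertex transitivity of $K_n$ and the $\mathrm{Aut}(K_n)$-invariance of $\PMST^{(K_n)}$ (the weights are i.i.d.) make the law of $\deg(x;\omega_n)$ the same for every $x\in V(K_n)$. Since $\rho_n$ is uniform and independent of $\omega_n$,
\[
\bE_n(N_n^2)=\frac1n\sum_{x}\EMST^{(K_n)}\big[\deg(x;\omega)^2\big]=\EMST^{(K_n)}\big[\deg(x;\omega)^2\big].
\]
The two sequences therefore coincide term by term, and so do their $\liminf$s.

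\textbf{Step 2 (convergence in distribution).} We show $N_n\Rightarrow N$. By the result recalled above (\cite[Theorem 5.4]{Aldous_Steele2004objective_method}, \cite[Theorem 6.1]{Nachmias_Tang2024}), $(\omega_n,\rho_n)$ converges locally to $(\WP,\emptyset)$. Here $\omega_n$ is regarded as a random graph with root $\rho_n$; by Definition~\ref{def:local-limit} this means that for every $r$ the rooted balls $(B_{\omega_n}(\rho_n,r),\rho_n)$ converge in distribution to $(B_{\WP}(\emptyset,r),\emptyset)$.

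Take $r=1$. The root degree is a function of the isomorphism class of the radius-$1$ ball, and this function takes values in the discrete set $\bbN$. For each $k$, the event $\{\deg=k\}$ is a union of countably many isomorphism classes of rooted balls. Each such class is an atom of the countable set of finite rooted graphs, so the event is both open and closed. Hence
\[
\bP_n[N_n=k]\to \bbP[N=k]\quad\text{for each }k,
\]
that is, $N_n\Rightarrow N$.

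A caveat: one should check that the cited theorems state convergence of the MST as a rooted graph, not only of the weighted complete graph. They do, since $\WP$ is identified there as the local limit of $\MST(K_n)$.

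\textbf{Step 3 (Fatou).} By the Skorokhod representation, realize $N_n$ and $N$ on one probability space with $N_n\to N$ almost surely. Alternatively, argue directly: for each $K$,
\[
\bE_n(N_n^2)\ge \sum_{k\le K}k^2\,\bP_n[N_n=k]\longrightarrow \sum_{k\le K}k^2\,\bbP[N=k].
\]
Taking the $\liminf$ in $n$ and then letting $K\to\infty$ gives
\[
\liminf_{n\to\infty}\bE_n(N_n^2)\ge \bbE(N^2).
\]
This holds whether $\bbE(N^2)$ is finite or infinite.

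The only step with real content is Step 2. Its substance lies in the cited local-limit theorem; the remaining work is the routine check that the root degree is a continuous function on $\Gb$, which follows because degree is determined by the radius-$1$ ball.
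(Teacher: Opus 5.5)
Your proposal is correct and takes the same route as the paper: the equality comes from vertex-transitivity, $N_n\to N$ in distribution follows from the cited local-limit theorem, and Fatou's lemma is applied after a Skorokhod representation. Your check that the root degree is a function of the radius-$1$ ball, and your truncation argument as an alternative to Skorokhod, are both valid and fill in details the paper leaves implicit.
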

\begin{proof}
By symmetry, $\EMST^{(K_n)}[\deg(x;\omega)^2] = \bE(N_n^2)$ for any fixed $x$. It is known that $N_n \stackrel{d}{\to} N$ \cite[Theorem 5.4]{Aldous_Steele2004objective_method}. By Skorokhod's embedding theorem \cite{Skorokhod1956limit} we may assume $N_n \to N$ almost surely on a common probability space, and denote the corresponding expectation by $\bE$. Fatou's lemma then gives $\liminf \bE_n(N_n^2) =\liminf_{n\to\infty}\bE(N_n^2)\ge \bE(\liminf N_n^2) = \bbE(N^2)$. \end{proof}

\begin{question}\label{ques: N_n to N}
Is it true that $\lim_{n\to\infty} \EMST^{(K_n)}\big[\deg(x;\omega)^2\big] = \bbE(N^2)$?
\end{question}
A positive answer would also allow us to treat adjacent edges; see the discussion in the  remarks of Section~\ref{sec:MST_conclusion}. 

\subsubsection{The second moment of the degree of the root in the WMSF on the PWIT}
\label{sec:MST_second_moment}
We now compute $\bbE(N^2)$. First, a known fact about uniform spanning trees.

\begin{lemma}\label{lem: second moment of degree in UST}
For a fixed vertex $u$ in $K_n$,
\[
\EUST\big[\deg(u;\UST(K_n))^2\big] = 5 - \frac{11}{n} + \frac{6}{n^2}.
\]
\end{lemma}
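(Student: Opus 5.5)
We would compute the degree distribution of a fixed vertex $u$ in $\UST(K_n)$ exactly. Since $\UST(K_n)$ is a uniformly random labelled tree on $n$ vertices, we use the Prüfer code bijection. Under this bijection, the degree of $u$ equals $1$ plus the number of occurrences of $u$ in the Prüfer sequence. For a uniform tree, the Prüfer sequence is uniform in $[n]^{n-2}$, so its $n-2$ entries are i.i.d.\ uniform on $V(K_n)$. Hence
\[
\deg(u;\UST(K_n)) \stackrel{d}{=} 1+B,\qquad B\sim \mathrm{Binomial}\Big(n-2,\tfrac1n\Big).
\]

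The second moment then follows from the binomial moments:
\[
\bbE B=\frac{n-2}{n},\qquad \bbE B^2=\frac{(n-2)(n-1)}{n^2}+\frac{(n-2)}{n}\cdot 1 - \frac{(n-2)}{n^2}\,,
\]
where we use $\mathrm{Var}(B)=\frac{(n-2)}{n}\big(1-\frac1n\big)$ and $\bbE B^2=\mathrm{Var}(B)+(\bbE B)^2$. We then expand
\[
\bbE[(1+B)^2]=1+2\bbE B+\bbE B^2
\]
and collect powers of $1/n$:
\[
1+2-\frac4n+\frac{(n-2)(n-1)}{n^2}+\frac{(n-2)^2}{n^2}=3-\frac4n+\Big(1-\frac3n+\frac2{n^2}\Big)+\Big(1-\frac4n+\frac4{n^2}\Big),
\]
which gives $5-\frac{11}{n}+\frac{6}{n^2}$.

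As an independent check that avoids Prüfer codes, we can combine Corollary~\ref{cor: reduction to second moment bound} with a direct computation of $p_2$ for the UST on $K_n$. Under the transfer-current theorem, two disjoint edges have transfer current $0$ in $K_n$, because the voltage induced by a unit current across $e$ is symmetric on the endpoints of $e'$. Hence $p_2=p_0^2$ exactly, and the equality case of Corollary~\ref{cor: reduction to second moment bound} yields the same value.

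There is no real obstacle here. The only care needed is the arithmetic in the expansion and the correctness of the claim that the degree equals $1$ plus the multiplicity in the Prüfer code.
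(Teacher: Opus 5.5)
Your proof is correct, but your main argument takes a different route from the paper. The paper's proof is what you call the ``independent check''. In $K_n$, two non-adjacent edges are exactly uncorrelated under the UST: the transfer current between them vanishes by symmetry, or equivalently by the effective-resistance computation. Hence $p_2=p_0^2$, and the equality case of Corollary~\ref{cor: reduction to second moment bound} yields $\EUST[\deg(u)^2]=5-\frac{11}{n}+\frac{6}{n^2}$. That corollary rests on Lemma~\ref{lem:2nd-pNC} and $p_0=2/n$.

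Your Pr\"ufer-code argument is more elementary and self-contained. It needs neither the transfer-current theorem nor the identities of Section~\ref{sec:relation}. It also gives the entire law $\deg(u)\stackrel{d}{=}1+\mathrm{Bin}(n-2,1/n)$, hence all moments, not just the second. The paper's route costs one line given the machinery already in place. It also makes a conceptual point: $5-\frac{11}{n}+\frac{6}{n^2}$ is exactly the threshold in Corollary~\ref{cor: reduction to second moment bound} separating negative from positive correlation of non-adjacent edges. The UST on $K_n$ sits exactly on it, which is the benchmark the MST is compared against. The formula is then reused for the uniformly labelled first pond in Lemma~\ref{lem:N^2}.

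One small slip: your first display for $\bbE B^2$ is wrong as written. Since $\frac{n-2}{n}-\frac{n-2}{n^2}=\frac{(n-2)(n-1)}{n^2}$, that display equals $\frac{2(n-2)(n-1)}{n^2}$. The correct value is $\mathrm{Var}(B)+(\bbE B)^2=\frac{(n-2)(n-1)}{n^2}+\frac{(n-2)^2}{n^2}=\frac{(n-2)(2n-3)}{n^2}$; for example, at $n=3$ your display gives $4/9$ instead of $1/3$. Your subsequent expansion uses the correct expression, so the final value $5-\frac{11}{n}+\frac{6}{n^2}$ is unaffected.
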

\begin{proof}
For non-adjacent edges $e,f$ in $K_n$, it is well known that $\PUST(e,f\in\UST(K_n)) = \PUST(e\in\UST(K_n))\PUST(f\in\UST(K_n))$ (e.g., by effective resistance \cite[Section 4.2]{LP2016}). Hence $p_2 = p_0^2$ in Corollary~\ref{cor:1-2moment} for the UST measure, and Corollary~\ref{cor: reduction to second moment bound} yields the formula. \end{proof}

\begin{lemma}\label{lem:N^2}
Let 
\[
\beta(\lambda)\coloneq \bbE[1/|\mathrm{PGW}(\lambda)|] = \sum_{m=1}^\infty \frac{1}{m}\bbP[|\mathrm{PGW}(\lambda)|=m]
\]
and recall $\theta(\lambda)=\bbP[|\mathrm{PGW}(\lambda)|=\infty]$. Then
\[
\bbE[N^2] = 5 - I_1 + 2I_2,
\]
where
\[
I_1 = \int_1^\infty \theta(\lambda)[1-\theta(\lambda)]\,d\lambda,\qquad
I_2 = \int_1^\infty \theta(\lambda)\beta(\lambda)\,d\lambda.
\]
\end{lemma}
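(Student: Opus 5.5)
We start from the decomposition $N=D_1+D_2+D_3$ in \eqref{eq:ND} and condition on $(X_1,Z_1)$. By Proposition~\ref{prop:deg-root-M}(2) the three summands are conditionally independent given $X_1$. In addition, items (4)–(6) show that, given $(X_1,Z_1)$, the laws of $D_1$ and $D_3$ depend only on $Z_1$ and the law of $D_2$ depends only on $X_1$. We therefore expand
\[
\bbE[N^2\mid X_1,Z_1]=\bbE[D_1^2]+\bbE[D_2^2]+\bbE[D_3^2]+2\bbE[D_1]\bbE[D_2]+2\bbE[D_1D_3]+2\bbE[D_2]\bbE[D_3],
\]
with all expectations conditional on $(X_1,Z_1)$.

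We treat $D_1D_3$ with care, since it comes from the same pond structure. The key point is that, given $Z_1=m$, the pond is a uniform labelled tree on $m$ vertices with $\emptyset$ uniform. Hence $D_1$ is the degree of a uniform vertex in $\UST(K_m)$. For the joint law, I would argue that $D_3=\mathbf 1_{\emptyset=R_1}$, where $R_1$ is a uniform vertex of the pond independent of the tree shape. Then
\[
\bbE[D_1D_3\mid Z_1=m]=\tfrac1m\,\bbE[D_1\mid Z_1=m]=\tfrac1m\cdot\tfrac{2(m-1)}{m}.
\]
Together with Lemma~\ref{lem: second moment of degree in UST}, which gives $\bbE[D_1^2\mid Z_1=m]=5-11/m+6/m^2$, and $\bbE[D_1\mid Z_1=m]=2-2/m$, the conditional second moment becomes a polynomial in $1/Z_1$ and $\alpha(X_1)$:
\[
5-\tfrac{11}{m}+\tfrac{6}{m^2}+\alpha^2+\alpha+\tfrac1m+2\alpha\big(2-\tfrac2m\big)+\tfrac{4}{m}-\tfrac{4}{m^2}+\tfrac{2\alpha}{m}.
\]
Simplifying gives $5-\tfrac{6}{m}+\tfrac{2}{m^2}+\alpha^2+5\alpha-\tfrac{2\alpha}{m}$.

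Next we integrate over $Z_1$ given $X_1=\lambda$ using item (3). The size-biased Borel form means that $\bbE[1/Z_1\mid\lambda]$ and $\bbE[1/Z_1^2\mid\lambda]$ are $\frac{\theta}{\theta'}$ times $\bbP[|\mathrm{PGW}(\lambda)|<\infty]=1-\theta$ and $\beta(\lambda)$, respectively. Here I use that the Borel law $\bbP[|\mathrm{PGW}(\lambda)|=m]=e^{-\lambda m}(\lambda m)^{m-1}/m!$, so the density in (3) equals $m\,\bbP[|\mathrm{PGW}|=m]\,\theta/\theta'$. Finally we integrate over $X_1$. By item (1), $X_1$ has density $\theta'(\lambda)$ on $(1,\infty)$, and the factor $\theta/\theta'$ cancels. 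This produces integrals $\int_1^\infty\theta(1-\theta)\,d\lambda$, $\int_1^\infty\theta\beta\,d\lambda$, $\int\theta'\alpha$, $\int\theta'\alpha^2$ and $\int\theta(1-\theta)\alpha$.

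The main obstacle is simplifying the $\alpha$-terms, using $\alpha'=-(1-\theta)$ and integration by parts. Since $\alpha(\infty)=0$ and $\theta(1)=0$, we get $\int_1^\infty\theta'\alpha=\int_1^\infty\theta(1-\theta)^{\,\cdot}$-type expressions. Concretely, $\int\theta'\alpha=\int\theta(1-\theta)=I_1$ and $\int\theta'\alpha^2=2\int\theta(1-\theta)\alpha$. So the $\alpha$ contributions reduce to $5I_1+2\int\theta(1-\theta)\alpha-2\int\theta(1-\theta)\alpha=5I_1$. Combining with the $m$-terms, $-6I_1+2I_2$, gives $5-I_1+2I_2$.

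A second point needs checking: the joint law of $(D_1,D_3)$ given the pond. I would justify the independence of $R_1$ from the root's position via the exchangeability in item (4), or else recompute $\bbE[D_1D_3]$ directly from \cite{Nachmias_Tang2024}. If the $\alpha$-terms do not cancel as above, I would recheck this cross term first.
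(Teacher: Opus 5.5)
Your proposal is correct and follows the paper's proof essentially step for step. It uses the same expansion $\bbE[N^2\mid X_1,Z_1]=5-\frac{6}{Z_1}+\frac{2}{Z_1^2}+\alpha(X_1)^2+5\alpha(X_1)-\frac{2\alpha(X_1)}{Z_1}$, the same size-biased Borel identities $\bbE[1/Z_1\mid X_1=\lambda]=\frac{\theta}{\theta'}(1-\theta)$ and $\bbE[1/Z_1^2\mid X_1=\lambda]=\frac{\theta}{\theta'}\beta$, and the same integrations by parts $\int_1^\infty\alpha\theta'\,d\lambda=I_1$ and $\int_1^\infty\alpha^2\theta'\,d\lambda=2\int_1^\infty\theta(1-\theta)\alpha\,d\lambda$; your separate treatment of $\bbE[D_1D_3\mid Z_1]$ via a uniform $R_1$ independent of the pond and root is exactly the conditional independence given $(X_1,Z_1)$ that the paper invokes, and it gives the same value $\frac{2}{Z_1}\bigl(1-\frac{1}{Z_1}\bigr)$ (note that independence given $X_1$ alone would not justify factorizing any of the cross terms given $(X_1,Z_1)$, including those involving $D_2$).
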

\begin{proof}
From Proposition~\ref{prop:deg-root-M} and Lemma~\ref{lem: second moment of degree in UST}, conditioning on $X_1$ and $Z_1$ gives
\begin{align*}
\bbE[D_1\mid X_1,Z_1] &= 2\Bigl(1-\frac{1}{Z_1}\Bigr),\\
\bbE[D_1^2\mid X_1,Z_1] &= 5 - \frac{11}{Z_1} + \frac{6}{Z_1^2}.
\end{align*}
Using the conditional independence of $D_1,D_2,D_3$ given $X_1,Z_1$ and the formulas for $D_2$ and $D_3$ from Proposition~\ref{prop:deg-root-M}, we obtain
\begin{align*}
\bbE[N^2\mid X_1,Z_1] &=
\bbE[(D_1+D_2+D_3)^2\mid X_1,Z_1] \\
&= \bbE[D_1^2\mid X_1,Z_1] + \bbE[D_2^2\mid X_1,Z_1] + \bbE[D_3^2\mid X_1,Z_1] \\
&\quad + 2\bbE[D_1\mid X_1,Z_1]\bbE[D_2\mid X_1,Z_1] + 2\bbE[D_1\mid X_1,Z_1]\bbE[D_3\mid X_1,Z_1] \\
&\quad + 2\bbE[D_2\mid X_1,Z_1]\bbE[D_3\mid X_1,Z_1] \\
&= \Bigl(5-\frac{11}{Z_1}+\frac{6}{Z_1^2}\Bigr) + \bigl(\alpha(X_1)^2+\alpha(X_1)\bigr) + \frac{1}{Z_1} \\
&\quad + 2\cdot 2\Bigl(1-\frac{1}{Z_1}\Bigr)\alpha(X_1) + 2\cdot 2\Bigl(1-\frac{1}{Z_1}\Bigr)\frac{1}{Z_1} + 2\alpha(X_1)\frac{1}{Z_1} \\
&= 5 - \frac{6}{Z_1} + \frac{2}{Z_1^2} + \alpha(X_1)^2 + 5\alpha(X_1) - 2\frac{\alpha(X_1)}{Z_1}.
\end{align*}

Now compute the conditional expectations of $1/Z_1$ and $1/Z_1^2$ given $X_1=\lambda>1$. First, we have (see p.~951 of \cite{Addario-Berry_etal2012invasion_PWIT}) 
\be\label{eq:PGW-lambda}
\bbP[|\mathrm{PGW}(\lambda)|=m] = \frac{e^{-\lambda m}(\lambda m)^{m-1}}{m!},\qquad m\ge1,
\ee
and therefore
\be\label{eq:theta-expansion}
\theta(\lambda) = 1 - \sum_{m=1}^\infty \frac{e^{-\lambda m}(\lambda m)^{m-1}}{m!}.
\ee
Using item~3 of Proposition~\ref{prop:deg-root-M}, we obtain
\begin{align*}
\bbE\Bigl[\frac{1}{Z_1}\Bigm| X_1=\lambda\Bigr]
&=\sum_{m=1}^{\infty}\frac{1}{m}\cdot \frac{\theta(\lambda)}{\theta'(\lambda)}\cdot \frac{e^{-\lambda m}(\lambda m)^{m-1}}{(m-1)!}\\
&=\frac{\theta(\lambda)}{\theta'(\lambda)}\cdot\sum_{m=1}^{\infty}  \frac{e^{-\lambda m}(\lambda m)^{m-1}}{m!}\stackrel{\eqref{eq:theta-expansion}}{=}\frac{\theta(\lambda)}{\theta'(\lambda)}\cdot\big[1-\theta(\lambda)\big]\,,
\end{align*}
and
	\begin{align*}
\bbE\big[\frac{1}{Z_1^2}\mid X_1=\lambda\big]
&=\sum_{m=1}^{\infty}\frac{1}{m^2}\cdot \frac{\theta(\lambda)}{\theta'(\lambda)}\cdot \frac{e^{-\lambda m}(\lambda m)^{m-1}}{(m-1)!}\\
&=\frac{\theta(\lambda)}{\theta'(\lambda)}\cdot\sum_{m=1}^{\infty}\frac{1}{m}\cdot \bbP\big[\big|\mathrm{PGW}(\lambda)\big|=m\big]=\frac{\theta(\lambda)\beta(\lambda)}{\theta'(\lambda)}\,,
\end{align*}
where in the last equality we used the definition $\beta(\lambda)\coloneq \bbE[1/|\mathrm{PGW}(\lambda)|]$.

The density of $X_1$ is $\theta'(\lambda)$ because $\bbP(X_1\le\lambda)=\theta(\lambda)$ for $\lambda>1$. Substituting into the expression for $\bbE[N^2\mid X_1,Z_1]$ and integrating yields
\[
\bbE[N^2] = 5 - 6I_1 + 2I_2 + I_3 + 5I_4 - 2I_5,
\]
where
\[
I_1 = \int_1^\infty \theta(\lambda)[1-\theta(\lambda)]\,d\lambda,\quad
I_2 = \int_1^\infty \theta(\lambda)\beta(\lambda)\,d\lambda,\quad
I_3 = \int_1^\infty \alpha(\lambda)^2\theta'(\lambda)\,d\lambda,
\]
\[
I_4 = \int_1^\infty \alpha(\lambda)\theta'(\lambda)\,d\lambda,\quad
I_5 = \int_1^\infty \theta(\lambda)[1-\theta(\lambda)]\alpha(\lambda)\,d\lambda.
\]
Using $\alpha(\infty)=0=\theta(1)$ and $\alpha'(\lambda)=-[1-\theta(\lambda)]$, integration by parts gives
\[
I_4 = \int_1^\infty \theta(\lambda)[1-\theta(\lambda)]\,d\lambda = I_1,\qquad
I_3 = -\int_1^\infty \theta(\lambda)\,2\alpha(\lambda)\alpha'(\lambda)\,d\lambda = 2I_5.
\]
Hence $\bbE[N^2] = 5 - I_1 + 2I_2$. \end{proof}

\begin{proposition}\label{prop:N^2value}
For the degree $N$ of the root $\emptyset$ in the WMSF on the PWIT, we have
\[
\bbE(N^2) =10-4\zeta(3)\approx 5.191717\,,
\]
where $\zeta(3)\coloneq \sum_{k=1}^{\infty}\frac{1}{k^3}$ is the Ap\'{e}ry constant.
\end{proposition}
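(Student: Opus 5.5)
The plan is to evaluate the two integrals $I_1$ and $I_2$ of Lemma~\ref{lem:N^2} in closed form. To do so I would change the integration variable from $\lambda$ to $t=\theta(\lambda)\in(0,1)$. By \eqref{eq:theta-lambda-id} this change of variables is explicit:
\[
\lambda=\lambda(t)=-\frac{\log(1-t)}{t},\qquad 1-\theta(\lambda)=e^{-\lambda\theta(\lambda)} .
\]
As $\lambda$ runs over $(1,\infty)$, $t$ runs monotonically over $(0,1)$, since $\theta$ is increasing with $\theta(1)=0$ and $\theta(\infty)=1$. Every integral then becomes an integral over $[0,1]$ of logarithms and rational functions of $t$. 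These can be expressed through the standard values
\[
\int_0^1\frac{\log(1-t)}{t}\,dt=-\zeta(2),\qquad \int_0^1\frac{\log^2(1-t)}{t}\,dt=2\zeta(3).
\]

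\textbf{Step 1: computing $I_1$.} I would integrate by parts in $\lambda$, differentiating $\theta(1-\theta)$ and integrating $d\lambda$. The boundary terms vanish: at $\lambda=1$ we have $\theta=0$, and as $\lambda\to\infty$ we have $\lambda(1-\theta)\le\lambda e^{-\lambda/2}\to0$. This gives
\[
I_1=-\int_0^1\lambda(t)(1-2t)\,dt=\int_0^1\frac{(1-2t)\log(1-t)}{t}\,dt=2-\zeta(2).
\]

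\textbf{Step 2: a closed form for $\beta(\lambda)$.} This is the key step. I would use the tree function $T(z)=\sum_{m\ge1}m^{m-1}z^m/m!$, which satisfies $T(\mu e^{-\mu})=\mu$ for $\mu\le1$, together with the classical identity
\[
\sum_{m\ge1}\frac{m^{m-2}z^m}{m!}=T(z)-\frac{T(z)^2}{2}.
\]
Write $\beta(\lambda)=\sum_m \frac{1}{m}\,\frac{e^{-\lambda m}(\lambda m)^{m-1}}{m!}=\frac{1}{\lambda}\sum_m \frac{m^{m-2}(\lambda e^{-\lambda})^m}{m!}$. Evaluating the series at $z=\lambda e^{-\lambda}$, and noting that $T(\lambda e^{-\lambda})=\mu\coloneq\lambda(1-\theta(\lambda))$ is the dual subcritical parameter (because $\mu e^{-\mu}=\lambda e^{-\lambda}$ by \eqref{eq:theta-lambda-id}), I expect
\[
\beta(\lambda)=\frac{\mu-\mu^2/2}{\lambda}=(1-\theta)\Bigl(1-\frac{\lambda(1-\theta)}{2}\Bigr).
\]
Equivalently, $\mathrm{PGW}(\lambda)$ conditioned to be finite is $\mathrm{PGW}(\mu)$, and $\bbE[1/|\mathrm{PGW}(\mu)|]=1-\mu/2$. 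I would verify the identity for $\sum m^{m-2}z^m/m!$ by differentiating in $z$ and using $zT'=T/(1-T)$.

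\textbf{Step 3: computing $I_2$.} Substituting the closed form gives
\[
I_2=\int_1^\infty \theta(1-\theta)\,d\lambda-\frac12\int_1^\infty\theta(1-\theta)^2\lambda\,d\lambda=I_1-\frac12 J,
\]
where $J$ denotes the second integral. For $J$ I would again pass to $t$, using $d\lambda=\lambda'(t)\,dt$ with $\lambda'(t)=\frac{1}{t(1-t)}+\frac{\log(1-t)}{t^2}$, and integrate by parts where convenient. The resulting integrand contains terms with $\log^2(1-t)\cdot(\text{polynomial})/t$ and $\log(1-t)\cdot(\text{polynomial})$. These reduce to $\zeta(3)$, $\zeta(2)$ and rationals.

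\textbf{Step 4: assembly and main obstacle.} The claimed value requires $-I_1+2I_2=I_1-J=5-4\zeta(3)$, that is, $J=4\zeta(3)-3-\zeta(2)$. The $\zeta(2)$ contributions must therefore cancel. This serves as a consistency check, and I expect the main obstacle to be the bookkeeping in $J$, namely keeping track of the polynomial-times-$\log^2(1-t)/t$ pieces and their boundary terms at $t\to1$. Finally I would substitute into Lemma~\ref{lem:N^2} to obtain $\bbE(N^2)=10-4\zeta(3)$ and evaluate this numerically as $\approx5.1917$.
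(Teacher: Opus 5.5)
Your proposal is correct and shares its key ingredients with the paper. First, the closed form $\beta=q-\lambda q^2/2$ with $q=1-\theta$; this is exactly Lemma~\ref{lem:beta-q}, proved the same way via $U=T-T^2/2$ at $z=\lambda e^{-\lambda}$. Second, the substitution of $\theta$ (equivalently $q$) for $\lambda$ through \eqref{eq:theta-lambda-id}.

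You differ from the paper in how the integrals are evaluated. The paper never computes $I_1$ and $I_2$ separately. It first writes $2I_2-I_1=\int_1^\infty\theta q(1-\lambda q)\,\diff\lambda$. Because $\diff\lambda/\diff q=-(1-\lambda q)/(q(1-q))$, this collapses to the perfect square $\int_0^1(1-\lambda(q)q)^2\,\diff q$, which is then expanded in geometric series after substituting $q=e^{-t}$. That ordering makes $\bbE(N^2)>5$, which is all the p-NC application needs, visible before any computation, and $\zeta(2)$ never appears. Your route gives the individual values $I_1=2-\zeta(2)$ and $I_2=\tfrac{7}{2}-\tfrac{1}{2}\zeta(2)-2\zeta(3)$. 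The price is a $\zeta(2)$ cancellation and a few more integrals.

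The step you left open does close, but the integrand is not quite of the form you anticipated. In fact
$t(1-t)^2\lambda\lambda'=-\frac{(1-t)\log(1-t)}{t}-\frac{(1-t)^2\log^2(1-t)}{t^2}$ contains a $\log^2(1-t)/t^2$ term. The cleanest route is to use $\lambda\lambda'=\frac12(\lambda^2)'$ and integrate by parts once; both boundary terms vanish, giving
\[
J=-\frac12\int_0^1\lambda(t)^2(1-t)(1-3t)\,\diff t=-\frac12\int_0^1\log^2(1-t)\Bigl(\frac{1}{t^2}-\frac{4}{t}+3\Bigr)\diff t .
\]
For the first piece, $\int_0^1\log^2(1-t)\,t^{-2}\,\diff t=2\zeta(2)$. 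To get this, integrate by parts against the antiderivative $1-1/t$, which vanishes at $t=1$, so no divergent boundary term appears. Combined with $\int_0^1\log^2(1-t)\,t^{-1}\,\diff t=2\zeta(3)$ and $\int_0^1\log^2(1-t)\,\diff t=2$, this yields $J=4\zeta(3)-3-\zeta(2)$, exactly as required.
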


\begin{remark*}
	The value $\bbE(N^2)$ is reminiscent of Frieze's famous $\zeta(3)$ theorem for minimal spanning trees \cite{Frieze1985value_MST}.
\end{remark*}

\begin{lemma}\label{lem:beta-q}
	Define $q(\lambda)\coloneq 1-\theta(\lambda)$ for $\lambda>1$. Then for $\lambda>1$,
	\be\label{eq:beta-q-relation}
	\beta(\lambda)=q(\lambda)-\frac{\lambda q(\lambda)^2}{2}\,.
	\ee
\end{lemma}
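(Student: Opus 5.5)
We need $\beta(\lambda)=\sum_m \frac1m \frac{e^{-\lambda m}(\lambda m)^{m-1}}{m!}$. The plan is to use the Lagrange-inversion / Borel tree function. Set $x=\lambda e^{-\lambda}$ and let $T(x)=\sum_{m\ge1}\frac{m^{m-1}}{m!}x^m$ be the Cayley tree function, which satisfies $T=xe^{T}$ for $|x|\le e^{-1}$. For $\lambda>1$, the relevant branch value is $T(\lambda e^{-\lambda})=\lambda q(\lambda)$, where $q=1-\theta$. This holds because $\sum_m \bbP[|\mathrm{PGW}(\lambda)|=m]=q$ by \eqref{eq:theta-expansion}, and
\[
\sum_m \frac{m^{m-1}}{m!}\lambda^{m-1}e^{-\lambda m}=\frac{T(x)}{\lambda}.
\]
The identity \eqref{eq:theta-lambda-id} also gives $\lambda q e^{-\lambda q}=\lambda e^{-\lambda}$, consistent with $T=xe^T$. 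Then
\[
\beta(\lambda)=\frac1\lambda\sum_{m\ge1}\frac{m^{m-2}}{m!}x^m=\frac{S(x)}{\lambda},\qquad S(x)\coloneq\sum_{m\ge1}\frac{m^{m-2}}{m!}x^m.
\]

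The key step is the classical identity $S(x)=T(x)-\tfrac12T(x)^2$. To prove it, note that $xS'(x)=\sum \frac{m^{m-1}}{m!}x^m=T(x)$. Differentiating $T=xe^T$ gives $T'=e^T/(1-T)$, so $xT'=T/(1-T)$. Now check that $U\coloneq T-T^2/2$ satisfies
\[
xU'=(1-T)\,xT'=T.
\]
Since $S(0)=U(0)=0$ and both solve $xS'=T$, we get $S=U$ as formal power series. The identity then holds for $0<x\le e^{-1}$ by absolute convergence, since the coefficients are $O(m^{-5/2}e^m)$ by Stirling.

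Substituting $T=\lambda q$ gives
\[
\beta=\frac{\lambda q-\lambda^2q^2/2}{\lambda}=q-\frac{\lambda q^2}{2},
\]
which is the claim.

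The main obstacle is justifying that for $\lambda>1$ the series $T(\lambda e^{-\lambda})$ equals $\lambda q(\lambda)$ and not $\lambda$. The series converges to the principal branch, whose values lie in $[0,1]$. Since $\lambda q(\lambda)<1$ for $\lambda>1$, and this is exactly what the probabilistic expansion \eqref{eq:theta-expansion} asserts, this follows directly from the paper's stated facts.
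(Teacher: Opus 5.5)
Your proposal is correct and is essentially the paper's proof. Like the paper, you substitute $z=\lambda e^{-\lambda}$, read off $q(\lambda)=T(z)/\lambda$ from \eqref{eq:theta-expansion} and $\beta(\lambda)=U(z)/\lambda$, and conclude from $U=T-T^2/2$.

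The only difference is that you prove this tree-function identity yourself, by showing that both series satisfy $xF'=T$ and vanish at $0$, whereas the paper cites it from Flajolet--Salvy--Schaeffer. Your branch discussion is harmless but unnecessary, since \eqref{eq:theta-expansion} already gives $T(\lambda e^{-\lambda})=\lambda q(\lambda)$ directly.
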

\begin{proof}
	Fix $\lambda>1$ and set
	\[
	z\coloneq \lambda e^{-\lambda}\in(0,e^{-1})\,.
	\]
	Let $T(z)$ and $U(z)$ be the exponential generating functions for rooted trees and unrooted trees:
	\[
	T(z)\coloneq \sum_{m=1}^{\infty}\frac{m^{m-1}}{m!}z^m,\qquad 
	U(z) \coloneq \sum_{m=1}^{\infty}\frac{m^{m-2}}{m!}z^m.
	\]
	First observe that
	\begin{align*}
	q(\lambda)&=1-\theta(\lambda)\stackrel{\eqref{eq:theta-expansion}}{=}\sum_{m=1}^{\infty}\frac{e^{-\lambda m} (\lambda m)^{m-1}}{m!}\\
	&=\frac{1}{\lambda}\sum_{m=1}^{\infty}\frac{m^{m-1}}{m!}\big(\lambda e^{-\lambda}\big)^{m}=\frac{1}{\lambda}T(z)\,.
	\end{align*}
	Similarly,
	\begin{align*}
	\beta(\lambda)&=\sum_{m=1}^{\infty}\frac{1}{m}\bbP[|\mathrm{PGW}(\lambda)|=m] \stackrel{\eqref{eq:PGW-lambda}}{=}\sum_{m=1}^{\infty}\frac{1}{m}\cdot \frac{e^{-\lambda m} (\lambda m)^{m-1}}{m!}\\
	&=\frac{1}{\lambda}\sum_{m=1}^{\infty}\frac{m^{m-2}}{m!}\big(\lambda e^{-\lambda}\big)^{m}=\frac{1}{\lambda}U(z)\,.
	\end{align*}
	Since $U(z)=T(z)-\frac{T^2(z)}{2}$ (see e.g., Theorem 1 of \cite{FSS2004}), we obtain \eqref{eq:beta-q-relation}:
	\[
	\beta(\lambda)=\frac{1}{\lambda}U(z)=\frac{1}{\lambda}\big[T(z)-\frac{T^2(z)}{2}\big]=q(\lambda)-\frac{\lambda q(\lambda)^2}{2}\,. \qedhere
	\]
\end{proof}

\begin{proof}[Proof of Proposition~\ref{prop:N^2value}]
	
	Using \eqref{eq:theta-lambda-id} and $q(\lambda)=1-\theta(\lambda)$, we have
	\be\label{eq:q-l2}
	\log q(\lambda)=-\lambda(1-q(\lambda))\,.
	\ee
	Differentiating with respect to $\lambda$ gives
	\[
	\frac{q'(\lambda)}{q(\lambda)}=-(1-q(\lambda))+\lambda q'(\lambda),
	\]
	hence
	\[
	q'(\lambda)=-\frac{q(\lambda)(1-q(\lambda))}{1-\lambda q(\lambda)}.
	\]
	Since $q(\lambda)\in(0,1)$  and $q'(\lambda)=-\theta'(\lambda)<0$ for $\lambda\in(1,\infty)$, we obtain 
	\be\label{eq:q1}
	0<q(\lambda)<\frac{1}{\lambda}\,,\,\,\forall\,\lambda\in(1,\infty)\,.
	\ee
	By Lemma~\ref{lem:N^2}, $\bbE(N^2)=5+2I_2-I_1$, where 
	\[
	2I_2-I_1=\int_{1}^{\infty}\theta(\lambda)\big[ 2\beta(\lambda)-q(\lambda)\big]\diff \lambda\,.
	\]
	From \eqref{eq:q1} and \eqref{eq:beta-q-relation}, for $\lambda>1$,
	\[
	2\beta(\lambda)-q(\lambda)=q(\lambda)-\lambda q(\lambda)^2=q(\lambda)\big[1-\lambda q(\lambda)\big]>0\,.
	\]
	Hence $	2I_2-I_1=\int_{1}^{\infty}\theta(\lambda)\big[ 2\beta(\lambda)-q(\lambda)\big]\diff \lambda>0$. While this suffices for the purpose of Proposition~\ref{prop: lower bound on N_n^2}, we continue to deduce the exact value of $2I_2-I_1$.
	Since $q(\lambda)\downarrow 0$ as $\lambda\to\infty$ and $q(1)=1$, the change of variables $q=q(\lambda)$ yields
	\begin{align*}
	2I_2-I_1
	&=
	\int_1^\infty (1-q)q(1-\lambda q)\diff\lambda \\
	&=
	\int_1^\infty (1-q)q(1-\lambda q)
	\left(-\frac{1-\lambda q}{q(1-q)}\right)\diff q \\
	&=
	\int_0^1 \bigl(1-\lambda(q)q\bigr)^2 \diff q,
	\end{align*}
	where
	\[
	\lambda(q)\stackrel{\eqref{eq:q-l2}}{=}\frac{-\log q}{1-q},\qquad q\in(0,1).
	\]
	Thus
	\[
	2I_2-I_1
	=
	\int_0^1 \left(1+\frac{q\log q}{1-q}\right)^2 \diff q.
	\]
	Substitute $q=e^{-t}$: $t\in(0,\infty)$, $\diff q=-e^{-t}\diff t$, and
	\[
	1+\frac{q\log q}{1-q}
	=
	1-\frac{te^{-t}}{1-e^{-t}}
	=
	1-\frac{t}{e^t-1}.
	\]
	Therefore
	\[
	2I_2-I_1
	=
	\int_0^\infty e^{-t}\left(1-\frac{t}{e^t-1}\right)^2\diff t.
	\]
	Expanding the square,
	\[
	2I_2-I_1
	=
	\int_0^\infty e^{-t}\diff t
	-2\int_0^\infty \frac{te^{-t}}{e^t-1}\diff t
	+\int_0^\infty \frac{t^2e^{-t}}{(e^t-1)^2}\diff t.
	\]
	For $t>0$, we have the absolutely convergent expansions
	\[
	\frac{e^{-t}}{e^t-1}
	=
	\frac{e^{-2t}}{1-e^{-t}}
	=
	\sum_{n=2}^\infty e^{-nt},
	\]
	and
	\[
	\frac{e^{-t}}{(e^t-1)^2}
	=
	\frac{e^{-3t}}{(1-e^{-t})^2}
	=
	\sum_{n=3}^\infty (n-2)e^{-nt}.
	\]
	By Tonelli's theorem,
	\begin{align*}
	2I_2-I_1
	&=
	1
	-2\sum_{n=2}^\infty \int_0^\infty t e^{-nt}\diff t
	+\sum_{n=3}^\infty (n-2)\int_0^\infty t^2 e^{-nt}\diff t \\
	&=
	1
	-2\sum_{n=2}^\infty \frac{1}{n^2}
	+2\sum_{n=3}^\infty \frac{n-2}{n^3}.
	\end{align*}
	Using
	\[
	\sum_{n=2}^\infty \frac{1}{n^2}=\zeta(2)-1
	\]
	and
	\[
	\sum_{n=3}^\infty \frac{n-2}{n^3}
	=
	\sum_{n=3}^\infty \frac{1}{n^2}
	-2\sum_{n=3}^\infty \frac{1}{n^3}
	=
	\left(\zeta(2)-\frac54\right)-2\left(\zeta(3)-\frac98\right)
	=
	\zeta(2)+1-2\zeta(3),
	\]
	we conclude that
	\begin{align*}
	2I_2-I_1
	&=
	1-2(\zeta(2)-1)+2\bigl(\zeta(2)+1-2\zeta(3)\bigr) \\
	&=
	5-4\zeta(3).
	\end{align*}
	Using the numerical value $\zeta(3)\approx1.2020569$, we obtain $\bbE(N^2)=10-4\zeta(3)\approx 5.191717$.
	This completes the proof.
\end{proof}

\subsubsection{Proof of Theorem~\ref{thm: NCMST} and supplementary remarks}\label{sec:MST_conclusion}

\begin{proof}[Proof of Proposition~\ref{prop: lower bound on N_n^2}]
By Proposition~\ref{prop:N^2value} we have $\bbE(N^2)>5$.  Combining this with  Proposition~\ref{prop: N_n to N} yields
\[
\liminf_{n\to\infty} \EMST^{(K_n)}\big[\deg(x;\omega)^2\big] \ge \bbE(N^2) > 5.
\]
Therefore there exists $C>0$ such that  for all  $n\geq C$, we have $\EMST^{(K_n)}[\deg(x;\omega)^2] > 5$, which proves the proposition. \end{proof}

\begin{proof}[Proof of Theorem~\ref{thm: NCMST}]
	This follows immediately by combining Corollary~\ref{cor: reduction to second moment bound} with Proposition~\ref{prop: lower bound on N_n^2}. 
 \end{proof}

\noindent\textbf{Remarks.} 
Our proof shows that for all sufficiently large $n$, the inequality $p_2 \le p_0^2$ holds for non-adjacent edges in $K_n$. The same local limit approach could potentially be applied to the adjacent-edges case. By Corollary~\ref{cor: reduction to second moment bound}, $p_1 \le p_0^2$ is equivalent to $\bbE[\deg(x;\omega)^2] \le 6 - 14/n + 8/n^2$. Since the right-hand side tends to $6$ as $n\to\infty$, a positive answer to Question~\ref{ques: N_n to N} would imply $p_1 \le p_0^2$ for large $n$ if $\bbE(N^2) < 6$. Proposition~\ref{prop:N^2value} shows that $\bbE(N^2) < 6$, indicating that the p-NC property for adjacent edges likely also holds for sufficiently large $n$. Thus we expect that the complete graph $K_n$ satisfies the full p-NC property for MSTs when $n$ is large enough.

The proof of Theorem~\ref{thm: NCMST} in the present paper contrasts with the proofs of Theorems 1.4 and 1.6 in \cite{TZ2026}, where the adjacent case was easier to handle. While obtaining a lower bound on the second moment via Fatou's lemma and the local limit (as in Proposition~\ref{prop: N_n to N}) is straightforward, proving a corresponding upper bound seems more challenging.

We close this section with an open question suggested to the first author by Russ Lyons, which asks whether MST measures satisfy a uniform multiplicative negative-dependence bound analogous to known inequalities for uniform forests. For the uniform forest measure 
$\UF$
(the uniform probability measure on all spanning forests of a finite, connected graph),  Br\"and\'en and Huh \cite{Branden_Huh2020} proved the inequality:
\[
\UF\big[  \omega(e)=\omega(f)=1 \big]\leq 2 \cdot \UF\big[\omega(e)=1\big]\cdot \UF\big[\omega(f)=1\big]\,,\quad \forall\, e\neq f\,.
\]
Russ Lyons' question is:
\begin{question}\label{ques:Russ}
	Does there exist a constant $C>0$ such that for every finite, connected graph $G=(V,E)$ and every pair of distinct edges $e,f\in E$, the associated $\MST$ measure $\PMST$ satisfies 
	\[
	\PMST\big[  \omega(e)=\omega(f)=1 \big]\leq C \cdot \PMST\big[\omega(e)=1\big]\cdot \PMST\big[\omega(f)=1\big]\,?
	\]
\end{question}

\section{The second moment of a typical vertex in UST on regular graphs}\label{sec:UST}

In this section, we use the p-NC property of uniform spanning trees (UST) to prove a universal upper bound on the second moment of the degree of a randomly chosen vertex in the UST of any finite, connected, regular graph. We further show this bound is exactly $6$, which is sharp. The proofs combine effective resistance estimates, the transfer-current theorem, and Edmonds’ matroid polytope theorem.

\subsection{Preliminaries}\label{sec:UST_prelim}
We collect key background on electrical networks, graph Laplacians, and matroid theory that are essential for the proofs in this section.

\subsubsection{Electronic networks and uniform spanning tree}\label{sec:elec}

Let $G=(V,E)$ be a finite, connected graph. We view $G$ as an electrical network where each edge has unit resistance. Define the Dirichlet form $\mathcal{E}$ by
\[
\mathcal{E}(f,g)\coloneq \sum_{e=\{e^-,e^+\}\in E}\big[f(e^+)-f(e^-)\big]\cdot \big[g(e^+)-g(e^-)\big]\,,\quad \forall\,f,g\in\bbR^{V}\,.
\]
For two distinct vertices $x,y\in V$, the \textbf{effective resistance} $\reff(x\leftrightarrow y;G)$ between $x$ and $y$ in $G$ is given by
\[
\reff(x\leftrightarrow y;G)^{-1} \coloneq \inf_{g\in \bbR^V}\{ \mathcal{E}(g,g)  \mid g(x)=1,g(y)=0   \}\,.
\]

Electronic network has a close relation with random walks and uniform spanning tree.
Kirchhoff's effective resistance formula \cite{K1847} states that for a finite, connected graph $G=(V,E)$, the probability that an edge $e$ belongs to the UST equals the effective resistance between its endpoints:
\be\label{eq:K-formula}
\PUST\big[\omega(e)=1\big]=\reff\big(e^{-}\leftrightarrow e^{+};G\big)\,.
\ee
Summing \eqref{eq:K-formula} over $e\in E$ yields Foster's theorem \cite{Fos1949}:
\be\label{eq:Foster}
\sum_{e\in E}\reff\big(e^{-}\leftrightarrow e^{+};G\big)=|V|-1\,.
\ee
For finite regular graphs, Nachmias and Peres used Foster's theorem to derive a useful quantitative estimate on the number of edges with relatively large effective resistance between their endpoints \cite[Lemma~3.1]{NP2022}.
\begin{lemma}[Lemma~3.1 in \cite{NP2022}]\label{lem:NP2022}
	On any $d$-regular graph $G$ and for any $\varepsilon>\frac{2}{d}$, the number of edges $e=\{e^-,e^+\}$ with $\reff\big(e^-\leftrightarrow e^+\big)\geq \varepsilon$ is at most $\frac{|V(G)|}{\varepsilon d-2}$.
\end{lemma}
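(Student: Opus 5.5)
The statement is Lemma~3.1 of \cite{NP2022}. I would reprove it with two ingredients: Foster's theorem \eqref{eq:Foster}, and a lower bound on the resistance of every edge in a $d$-regular graph. The counting then becomes a Markov-type inequality applied to the excess of each resistance over its minimum.

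\textbf{Step 1 (universal lower bound).} I will show that every edge $e=\{x,y\}$ of a $d$-regular graph satisfies
\[
\reff(x\leftrightarrow y)\ge \frac{2}{d+1}.
\]
Take the unit current flow from $x$ to $y$. Its flow on $e$ equals the voltage drop $\reff(x\leftrightarrow y)$, since the resistance is $1$. The remaining $1-\reff(x\leftrightarrow y)$ units leave $x$ through the other $d-1$ edges at $x$. Each such edge carries current at most the total drop, so
\[
1\le d\,\reff(x\leftrightarrow y).
\]
This gives only $\reff\ge 1/d$, which is not enough. A cleaner route is Rayleigh monotonicity. Short together all vertices other than $x$ and $y$ into a single vertex $z$. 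The result is the edge $e$ in parallel with a path $x$--$z$--$y$ made of $d-1$ parallel edges on each side. Hence
\[
\reff(x\leftrightarrow y)\ge \Big(1+\frac{1}{2/(d-1)}\Big)^{-1}=\frac{2}{d+1}.
\]
(Multi-edges between $x$ and $y$ would only decrease this, but I assume the graph is simple, as in \cite{NP2022}.)

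\textbf{Step 2 (Markov on the excess).} Set $n=|V(G)|$, so $|E|=nd/2$. Let $r_e$ denote the resistance of edge $e$, and let $A$ be the set of edges with $r_e\ge\varepsilon$. By Foster's theorem,
\[
n-1=\sum_{e}r_e\ge |A|\,\varepsilon+\big(nd/2-|A|\big)\frac{2}{d+1}.
\]
Rearranging,
\[
|A|\Big(\varepsilon-\frac{2}{d+1}\Big)\le n-1-\frac{nd}{d+1}=\frac{n}{d+1}-1<\frac{n}{d+1}.
\]
Therefore $|A|\le n/(\varepsilon(d+1)-2)$, which is at most $n/(\varepsilon d-2)$.

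\textbf{Hypothesis and main point.} The requirement $\varepsilon>2/d$ ensures $\varepsilon(d+1)-2>0$, so the bound is meaningful. The main obstacle is Step~1: the naive current-splitting estimate only gives $\reff\ge 1/d$, which would yield the weaker denominator $\varepsilon d-1$ with a worse constant. The shorting argument is what delivers the needed $2/(d+1)$.
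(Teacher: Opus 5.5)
Your argument is correct and is essentially the route the paper has in mind: the paper does not reprove this lemma but attributes it to Nachmias--Peres via Foster's theorem \eqref{eq:Foster}. Elsewhere it invokes the edge bound $\reff(e)\ge\frac{2}{d+1}$ (Eq.~(6) of \cite{NP2022}) and computes the total excess $\sum_{e}\bigl(\reff(e)-\frac{2}{d+1}\bigr)=\frac{n-d-1}{d+1}$ in Lemma~\ref{lem:appendix-alpha-beta}, which is exactly the quantity your Markov step controls. Your shorting derivation of the $\frac{2}{d+1}$ bound is right, and the simplicity assumption you flag is genuinely necessary (the count fails for multigraphs with heavy parallel bundles); you in fact obtain the slightly sharper bound $\frac{|V|}{\varepsilon(d+1)-2}$.
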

We will use the following corollary of Lemma~\ref{lem:NP2022}. For simplicity, write $\reff(e)$ for $\reff\big(e^{-}\leftrightarrow e^{+};G\big)$.
\begin{corollary}\label{cor:E_i}
	Suppose $G=(V,E)$ is a finite, connected, regular graph with degree $d\ge 3$. Set $M\coloneq \lfloor \log_3d \rfloor$. For $i=1,\ldots,M$, set $\epsilon_i\coloneq \frac{3^i}{d}$ and set $\epsilon_{M+1}\coloneq 1$. Let $E_0$ be the set of edges $e$ such that $\reff(e)\leq \frac{3}{d}$. For $i=1,\ldots,M$, let $E_i$ be the set of edges $e$ with $\reff(e)\in (\epsilon_i,\epsilon_{i+1}]$. Then $E=\bigcup_{i=0}^{M}E_i$ and for each $i\in[1,M]$,
	\[
	|E_i|\leq \frac{|V|}{3^i-2}\leq \frac{|V|}{3^{i-1}}\,.
	\] 
\end{corollary}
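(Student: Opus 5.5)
The proof applies Lemma~\ref{lem:NP2022} once for each $i$, with the threshold $\varepsilon=\epsilon_i=3^i/d$.

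First I check the covering claim $E=\bigcup_{i=0}^M E_i$. Every edge satisfies $\reff(e)\le 1$, since a single edge of unit resistance already joins its endpoints and removing other edges (Rayleigh monotonicity) can only raise the resistance up to $1$. Also $\epsilon_1=3/d$, and by definition of $M=\lfloor\log_3 d\rfloor$ we have $\epsilon_M=3^M/d\le 1=\epsilon_{M+1}$. Hence the intervals $[0,3/d]$, $(\epsilon_1,\epsilon_2]$, $\ldots$, $(\epsilon_M,\epsilon_{M+1}]$ cover $[0,1]$ and so cover every possible value of $\reff(e)$. One degenerate case needs a note: if $\epsilon_M=1$ exactly, then $E_M$ is empty, which is harmless. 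The assumption $d\ge 3$ guarantees $M\ge 1$.

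For the size bound, fix $i\in[1,M]$. Every $e\in E_i$ satisfies $\reff(e)>\epsilon_i\ge\epsilon_i$. Since $\epsilon_i=3^i/d\ge 3/d>2/d$, Lemma~\ref{lem:NP2022} applies with $\varepsilon=\epsilon_i$ and gives
\[
|E_i|\le \frac{|V|}{\epsilon_i d-2}=\frac{|V|}{3^i-2}.
\]
The last inequality, $3^i-2\ge 3^{i-1}$, is equivalent to $2\cdot 3^{i-1}\ge 2$, which holds for $i\ge1$.

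There is no real obstacle here. The only points needing care are confirming $\reff(e)\le1$ so that the top interval ends at $1$, and confirming that the hypothesis $\varepsilon>2/d$ of Lemma~\ref{lem:NP2022} holds for every $i\ge1$.
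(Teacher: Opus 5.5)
Your proof is correct and follows the paper's intended argument. The paper states this corollary without a separate proof, as a direct consequence of Lemma~\ref{lem:NP2022} with $\varepsilon=\epsilon_i=3^i/d>2/d$; your covering check via $\reff(e)\le 1$ and $3^M\le d$ just fills in the routine details (one small typo: ``$\reff(e)>\epsilon_i\ge\epsilon_i$'' should read ``$\reff(e)>\epsilon_i$, hence $\reff(e)\ge\epsilon_i$'').
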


Orient each edge of $G$ in both directions and denote the set of oriented edges by $\overrightarrow{E}$. 
For  oriented edges $\vec{g},\vec{h}\in\overrightarrow{E}$, let $Y(\vec{g},\vec{h})$ be the  current  across $\vec{h}$ when a unit current is injected at the tail of $\vec{g}$ and extracted at the head of $\vec{g}$ (the current is measured in the direction of $\vec{h}$).  For a subset of edges $\{e_1,\ldots,e_k\}\subset E$, let $\{\vec{e}_1,\ldots,\vec{e}_k\}$ be an arbitrary  fixed orientations of these edges.   The transfer-current theorem of Burton and Pemantle \cite{BP1993} states that,
\be\label{eq:trans}
\mathbb{P}_{\mathrm{UST}}\big[e_i\in \eta(\omega)\,, \text{ for all }i=1,\ldots,k\big]
=
\det Y\,,
\ee
where $Y$ is a $k\times k$ matrix with entries $Y_{i,j}=Y(\vec{e}_i,\vec{e}_j)$.

The following lemma expresses the pair-appearance probability for two adjacent edges in terms of effective resistances; this is a standard consequence of the transfer-current theorem. 
\begin{lemma}\label{lem:appendix-transfer-current}
	Let $G=(V,E)$ be a finite, connected graph with unit resistances on all edges. Let $x,y,z\in V$
	with $y$ and $z$ distinct neighbors of $x$. Let $e=\{x,y\}$ and $f=\{x,z\}$ be the two edges incident to $x$ with endpoints $y$ and $z$, respectively. 
	Set
	\[
	r_1\coloneq \reff(x\leftrightarrow y;G),\qquad
	r_2\coloneq \reff(x\leftrightarrow z;G),\qquad
	t\coloneq \reff(y\leftrightarrow z;G).
	\]
	Let $\omega$ be a UST sample on $G$. Then 
	\[
	\mathbb{P}_{\mathrm{UST}}\big[e,f\in \eta(\omega)\big]
	=
	r_1r_2-\frac{1}{4}(r_1+r_2-t)^2.
	\]
\end{lemma}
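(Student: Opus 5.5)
Orient both edges away from $x$: $\vec e=(x,y)$ and $\vec f=(x,z)$. By the transfer-current theorem \eqref{eq:trans} with $k=2$,
\[
\PUST\big[e,f\in\eta(\omega)\big]=Y(\vec e,\vec e)\,Y(\vec f,\vec f)-Y(\vec e,\vec f)\,Y(\vec f,\vec e).
\]
The proof then amounts to identifying these four entries in terms of $r_1,r_2,t$.

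For the diagonal entries, a unit current injected at $x$ and extracted at $y$ flows through $\vec e$ with current equal to the voltage difference $v(x)-v(y)=r_1$, since the edge has unit resistance. So $Y(\vec e,\vec e)=r_1$, which is Kirchhoff's formula \eqref{eq:K-formula}, and likewise $Y(\vec f,\vec f)=r_2$.

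For the off-diagonal entries, I will use the Green function (or Laplacian pseudo-inverse) representation. Let $L^+$ be the Moore--Penrose inverse of the graph Laplacian. The voltage for unit current from $a$ to $b$ is $v=L^+(\mathbf 1_a-\mathbf 1_b)$, and $\reff(a\leftrightarrow b)=\langle \mathbf 1_a-\mathbf 1_b,L^+(\mathbf 1_a-\mathbf 1_b)\rangle$. Then
\[
Y(\vec e,\vec f)=v_{\vec e}(x)-v_{\vec e}(z)=\langle \mathbf 1_x-\mathbf 1_z,\,L^+(\mathbf 1_x-\mathbf 1_y)\rangle,
\]
which is symmetric in $e$ and $f$ because $L^+$ is symmetric. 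This reciprocity gives $Y(\vec e,\vec f)=Y(\vec f,\vec e)$. Writing $u=\mathbf 1_x-\mathbf 1_y$ and $w=\mathbf 1_x-\mathbf 1_z$, we have $u-w=\mathbf 1_z-\mathbf 1_y$, so the polarization identity gives
\[
\langle u,L^+w\rangle=\tfrac12\big(\langle u,L^+u\rangle+\langle w,L^+w\rangle-\langle u-w,L^+(u-w)\rangle\big)=\tfrac12(r_1+r_2-t).
\]

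Substituting into the determinant yields $r_1r_2-\tfrac14(r_1+r_2-t)^2$. There is no real obstacle here. The only points requiring care are the sign conventions: since both edges are oriented away from $x$, the off-diagonal entry enters squared, so its sign is irrelevant anyway. The other point is the justification of the quadratic-form formula for effective resistance via $L^+$, which is standard because $\mathbf 1_a-\mathbf 1_b$ is orthogonal to the kernel (the constants) of the connected-graph Laplacian.
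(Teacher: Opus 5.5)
Your proof is correct and follows the same route as the paper. You orient both edges away from $x$, apply the $2\times 2$ transfer-current determinant, identify the diagonal entries as $r_1,r_2$, and identify the symmetric off-diagonal entries as $\frac12(r_1+r_2-t)$; the only difference is that you derive that off-diagonal value explicitly from $\reff(a\leftrightarrow b)=\langle \mathbf 1_a-\mathbf 1_b, L^+(\mathbf 1_a-\mathbf 1_b)\rangle$ and polarization, whereas the paper cites it as a standard computation (Lyons--Peres, Exercise 2.62(d)).
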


\begin{proof}
	Let $\vec{e}$ and $\vec{f}$ be fixed orientations of $e$ and $f$ such that their tails are both $x$. 
	Since each edge has unit resistance, 
	we have
	\[
	Y(\vec{e},\vec{e}) =  \reff(x\leftrightarrow y;G) = r_1,
	\]
	and
	\[
	Y(\vec{f},\vec{f}) = \reff(x\leftrightarrow z;G) = r_2.
	\]
	A standard computation (cf.\,  \cite[Exercise 2.62(d)]{LP2016}) yields that 
	\[
	Y(\vec{e},\vec{f}) = Y(\vec{f},\vec{e}) = \frac{r_1 + r_2 - t}{2}.
	\]
	Substituting these expressions into the  determinant \eqref{eq:trans} yields the claimed formula
	\[
	\mathbb{P}_{\mathrm{UST}}\big[e,f\in \eta(\omega)\big]
	=
	\det
	\begin{pmatrix}
	r_1 & \frac{r_1+r_2-t}{2}\\[2pt]
	\frac{r_1+r_2-t}{2} & r_2
	\end{pmatrix}
	=
	r_1r_2 - \frac{1}{4}(r_1+r_2-t)^2,
	\]
	which completes the proof. \qedhere
\end{proof}

\subsubsection{Graph Laplacian and the Moore--Penrose inverse}\label{sec:La}

\begin{definition}\label{def:Laplacian}
	Let $G=(V,E)$ be a finite graph with $|V|=n$.
	The adjacency matrix $A_G$, degree matrix $D_G$ and Laplacian matrix $L_G$ are $n\times n$ matrices indexed by $V$ defined as follows:
	\begin{itemize}
		\item $(A_G)_{u,v}$ is the number of edges between $u$ and $v$;
		\item $(D_G)_{u,u}=\deg(u;G)$ and $(D_G)_{u,v}=0$ for $u\neq v$; 
		\item $L_G=D_G-A_G$.
	\end{itemize}
\end{definition}
Since $L_G$ is real symmetric, its eigenvalues are all real and will be denoted $\lambda_1\leq \lambda_2\leq\cdots\leq \lambda_n$. We have the following well-known properties of $L_G$, for instance see \cite[Theorem~2.1 and 2.2]{Mohar1991}. 
\begin{proposition}\label{prop:Laplacian}
Let $G$ be a finite graph and let $L_G$ be the associated Laplacian matrix. Then the following conclusions hold. 
\begin{enumerate}
	\item[(1)]  $L_G$ is positive semi-definite.
	
	\item[(2)] The smallest eigenvalue $\lambda_1=0$ and a corresponding eigenvector is $\mathbf{1}=(1,\ldots,1)^{\mathsf T}$. Here $M^{\mathsf T}$ denotes the transpose of $M$. 
	
	\item[(3)] The multiplicity of $0$ as an eigenvalue of $L_G$ is equal to the number of connected components of $G$. In particular, $\lambda_2>0$ if and only if $G$ is connected. 
	
	\item[(4)] The largest eigenvalue $\lambda_n\leq \max\left\{\deg(u;G)+\deg(v;G) \mid \{u,v\in E(G)\} \right\}$\,.
\end{enumerate}
 
\end{proposition}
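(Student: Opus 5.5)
The four items are standard facts about the Laplacian. All of them come out of the quadratic form of $L_G$ together with the factorization of $L_G$ through an oriented incidence matrix. Throughout, $G$ is loopless; parallel edges are allowed.

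\emph{Setting up.} Fix an arbitrary orientation of each edge. Let $B$ be the $n\times |E|$ incidence matrix: $B_{u,e}=1$ if $u$ is the head of $e$, $B_{u,e}=-1$ if $u$ is the tail, and $0$ otherwise. A direct entrywise check gives $L_G=BB^{\mathsf T}$. The diagonal entry $(BB^{\mathsf T})_{u,u}$ counts the edges at $u$, which is $\deg(u;G)$. The entry $(BB^{\mathsf T})_{u,v}$ for $u\neq v$ is minus the number of edges joining $u$ and $v$, which is $-(A_G)_{u,v}$. Consequently
\[
x^{\mathsf T}L_Gx=\|B^{\mathsf T}x\|^2=\sum_{\{u,v\}\in E}(x_u-x_v)^2=\mathcal{E}(x,x)\,.
\]

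\emph{Items (1)--(3).} Item (1) is immediate, since $\mathcal{E}(x,x)\ge 0$. For item (2), every row of $L_G$ sums to $\deg(u)-\sum_v (A_G)_{u,v}=0$, so $L_G\mathbf 1=0$. Combined with (1), this shows $\lambda_1=0$ with eigenvector $\mathbf 1$.

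For item (3), since $L_G$ is positive semidefinite, $L_Gx=0$ holds if and only if $x^{\mathsf T}L_Gx=0$. By the displayed identity, this happens if and only if $x_u=x_v$ for every edge, that is, if and only if $x$ is constant on each connected component. The kernel is therefore spanned by the indicator vectors of the components. These vectors are linearly independent, so the multiplicity of $0$ equals the number of components. In particular, $\lambda_2>0$ if and only if $G$ is connected.

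\emph{Item (4).} This is the only step needing an idea beyond the quadratic form. The nonzero eigenvalues of $BB^{\mathsf T}$ coincide with those of the $|E|\times|E|$ matrix $B^{\mathsf T}B$, so it suffices to bound the spectral radius of $B^{\mathsf T}B$. This matrix has diagonal entries $(B^{\mathsf T}B)_{e,e}=2$. For $e\neq f$, the entry $(B^{\mathsf T}B)_{e,f}$ is a signed count of the endpoints that $e$ and $f$ share. If $e$ and $f$ are parallel, both endpoints are shared and the two contributions have the same sign when $e,f$ are oriented the same way, so $|(B^{\mathsf T}B)_{e,f}|\le 2$. In every case, $|(B^{\mathsf T}B)_{e,f}|$ is at most the number of shared endpoints.

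Apply Gershgorin's theorem to the row of $e=\{u,v\}$. The off-diagonal absolute row sum is at most the number of pairs (edge $f\neq e$, shared endpoint), which is $(\deg u-1)+(\deg v-1)$. Adding the diagonal entry $2$, every eigenvalue lies in an interval whose right endpoint is at most $\deg u+\deg v$ for some edge $\{u,v\}$. This gives $\lambda_n\le\max_{\{u,v\}\in E}\bigl(\deg(u;G)+\deg(v;G)\bigr)$.

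The only point requiring care is the bookkeeping for parallel edges in the Gershgorin row sum, and the count above handles it.
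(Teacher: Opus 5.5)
Your proof is correct. The paper gives no proof of its own and cites Mohar's survey (Theorems~2.1 and~2.2); your argument is the standard one behind those results, namely the oriented-incidence factorization $L_G=BB^{\mathsf T}$ for items (1)--(3), and for item (4) the Anderson--Morley transfer to $B^{\mathsf T}B$ with a Gershgorin row-sum bound (your loopless restriction is harmless, since the paper only applies the proposition to simple graphs).
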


Since $L_G$ is singular, we shall need the Moore--Penrose inverse \cite{Moore1920GeneralReciprocal,Penrose1955GeneralizedInverse}. 
\begin{definition}\label{def:MPinverse}
	For any real $m\times n$ matrix $A$, the Moore--Penrose inverse  $A^+$ is the unique $n\times m$ matrix satisfying the four conditions :
	\begin{enumerate}
		\item[(1)] $AA^+A=A$;
		\item[(2)] $A^+AA^+=A^+$;
		\item[(3)] $(AA^+)^{\mathsf T}=AA^+ $;
		\item[(4)] $(A^+A)^{\mathsf T}=A^+A$.
	\end{enumerate}
When $A$ is square and invertible, $A^+=A^{-1}$. In the singular or rectangular case, $A^+$ generalizes the inverse. 
\end{definition}

\begin{lemma}\label{lem:Lalpacian-example}
Consider complete graphs $K_n$ with $n\ge5$. Let $S\subset E(K_n)$ and write $H\coloneq \big(V(K_n),S\big)$ and  $G\coloneq \big(V(K_n),E(K_n)\setminus S\big)$. Assume $S$ is such that $\max\{\deg(x;H)\mid x\in V(H)\}\leq 2$ and $G$ is still connected.  Let $\mathbf{1}^\perp$ be the orthonormal complement of the all one-vector $\operatorname{\mathbf{1}}$ in the space $l_2\big(V(K_n)\big)$. Then $I-\frac{1}{n}L_H$ is invertible and 
\[
L_{G}^+\mathbf{x}=\frac{1}{n}\big(I-\frac{1}{n}L_H\big)^{-1}\mathbf{x}\,,\,\,\,\forall\,\mathbf{x}\in \mathbf{1}^\perp\,.
\]
\end{lemma}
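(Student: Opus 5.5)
Since $L_{K_n} = nI - J$ with $J=\mathbf{1}\mathbf{1}^{\mathsf T}$, and Laplacians are additive over edge-disjoint unions, we have
\[
L_G = L_{K_n} - L_H = nI - J - L_H = n\Big(I-\tfrac{1}{n}L_H\Big) - J .
\]
The plan is to show that $I-\frac1n L_H$ is invertible and that on $\mathbf 1^\perp$ it inverts $\frac1n L_G$, and then to match this with the defining properties of the Moore--Penrose inverse.

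\emph{Invertibility.} Since $\deg(x;H)\le 2$ for every $x$, Proposition~\ref{prop:Laplacian}(4) applied to $H$ gives that every eigenvalue of $L_H$ lies in $[0,4]$. Parts (1)--(2) give the lower bound $0$. Hence the eigenvalues of $I-\frac1n L_H$ lie in $[1-\frac4n,1]$, which is bounded away from $0$ because $n\ge5$. So $I-\frac1nL_H$ is invertible (and positive definite).

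\emph{Invariance of $\mathbf 1^\perp$.} Since $L_H\mathbf 1=0$ and $L_H$ is symmetric, $L_H$ maps $\mathbf 1^\perp$ into itself. Thus $B\coloneq I-\frac1nL_H$ restricts to an invertible self-map of the finite-dimensional space $\mathbf 1^\perp$, and therefore $B^{-1}$ also preserves $\mathbf 1^\perp$. On $\mathbf 1^\perp$ the term $J$ vanishes, so $L_G\mathbf y = nB\mathbf y$ for all $\mathbf y\in\mathbf 1^\perp$.

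\emph{Identifying $L_G^+$ on $\mathbf 1^\perp$.} Because $G$ is connected, Proposition~\ref{prop:Laplacian}(3) gives $\ker L_G=\operatorname{span}(\mathbf 1)$. Then $L_G$ is a bijection of $\mathbf 1^\perp$ onto itself, and for a symmetric matrix the Moore--Penrose inverse acts as the inverse of this restriction on the range and as $0$ on the kernel. I would verify this directly by checking the four conditions of Definition~\ref{def:MPinverse} for the candidate $P\,(L_G|_{\mathbf 1^\perp})^{-1}P$, where $P$ is the orthogonal projection onto $\mathbf 1^\perp$, and then invoke uniqueness. Now fix $\mathbf x\in\mathbf 1^\perp$ and set $\mathbf y=\frac1n B^{-1}\mathbf x\in\mathbf 1^\perp$. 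Then $L_G\mathbf y = nB\mathbf y=\mathbf x$, so $L_G^+\mathbf x=\mathbf y$, which is the claimed identity.

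The only mildly delicate step is the characterization of $L_G^+$ as the inverse of $L_G$ on $\mathbf 1^\perp$. It is standard but should be checked against Definition~\ref{def:MPinverse}. Everything else is routine linear algebra, with the bound $n\ge5$ entering only through the eigenvalue estimate $\lambda_{\max}(L_H)\le4<n$.
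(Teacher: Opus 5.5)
Your proposal is correct and follows the paper's proof essentially step for step. The paper also uses the eigenvalue bound $\lambda_{\max}(L_H)\le 4$ from Proposition~\ref{prop:Laplacian}(4) to get invertibility, and the decomposition $L_G=nI-J-L_H$ together with connectivity of $G$ to identify $L_G^+$ on $\mathbf 1^\perp$ with $(nI-L_H)^{-1}$. You are slightly more careful than the paper: you check explicitly that $(I-\frac1n L_H)^{-1}$ preserves $\mathbf 1^\perp$, and that the Moore--Penrose inverse of the symmetric matrix $L_G$ acts as the inverse of its restriction to $\mathbf 1^\perp$. The paper leaves both points implicit.
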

\begin{proof}
	By item (4) in Proposition~\ref{prop:Laplacian}, the largest eigenvalue of the real symmetric matrix $L_H$ is at most $4$. Hence all the eigenvalues of the real symmetric matrix $I-\frac{1}{n}L_H$ is  at least $1-\frac{4}{n}\geq \frac{1}{5}>0$. Thus  $I-\frac{1}{n}L_H$ is invertible .
	
	Note that $L_G+L_H=L_{K_n}$ and $L_{K_n}=nI-J$, where $I$ is the identity matrix and $J$ is the all-ones matrix. For any $\mathbf{x}\in \mathbf{1}^\perp$, we have $J\mathbf{x}=\mathbf{0}$ and 
	\[
	L_G\mathbf{x}=(nI-J)\mathbf{x}-L_H\mathbf{x}=(nI-L_H)\mathbf{x}\,.
	\]
	Since $G$ is connected, by item (3) in Proposition~\ref{prop:Laplacian} we have $L_G$ is positive definite on the subspace $\operatorname{span}(\mathbf{1})^\perp$. Thus  we have 
\[
L_{G}^+\mathbf{x}=(nI-L_H)^{-1}\mathbf{x}=\frac{1}{n}\big(I-\frac{1}{n}L_H\big)^{-1}\mathbf{x}\,,\,\,\,\forall\,\mathbf{x}\in \mathbf{1}^\perp\,.\qedhere
\]
\end{proof}
Next we recall a lemma which relates effective resistance with graph Laplacian; see \cite[Theorem~2.1]{ESMJK2011} or equation (7) in \cite{GX2004}.
\begin{lemma}\label{lem:res-Lap}
	Let $u,v$ be two distinct vertices in a finite graph $G=(V,E)$. Write $\mathbf{e}_u$ for the vector in $l_2(V)$ that has entry $1$  at position $u$  and zeroes elsewhere.  Then 
	\[
	\reff(u\leftrightarrow v; G)=(\mathbf{e}_u-\mathbf{e}_v)^TL_G^+\,(\mathbf{e}_u-\mathbf{e}_v)\,.
	\]
\end{lemma}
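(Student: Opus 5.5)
This identity is standard. The plan is to prove it directly from the variational characterization of effective resistance, using the spectral decomposition of $L_G$. Since the graph may be disconnected in general, we assume, as throughout this section, that $u$ and $v$ lie in the same component; in applications $G$ is connected. For simplicity we treat the connected case. Then, by Proposition~\ref{prop:Laplacian}, $\ker L_G=\operatorname{span}(\mathbf 1)$. Moreover, $L_G^+$ acts as the inverse of $L_G$ on $\mathbf 1^\perp$ and vanishes on $\mathbf 1$. This follows from the four Penrose conditions applied to the diagonalization $L_G=\sum_{k\ge2}\lambda_k \phi_k\phi_k^{\mathsf T}$, which gives $L_G^+=\sum_{k\ge 2}\lambda_k^{-1}\phi_k\phi_k^{\mathsf T}$.

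First, I would note that $\mathcal E(g,g)=g^{\mathsf T}L_G g$. The infimum defining $\reff(u\leftrightarrow v)^{-1}$ is attained at the harmonic function $h$, which has boundary values $h(u)=1$ and $h(v)=0$ and is harmonic elsewhere. Equivalently, $L_G h = c(\mathbf e_u-\mathbf e_v)$ for some scalar $c$, because the net flux out of $u$ equals the net flux into $v$. By discrete Green's identity,
\[
\mathcal E(h,h)=h^{\mathsf T}L_Gh=c\,(h(u)-h(v))=c,
\]
so $c=\reff^{-1}$ and the unit current is $i$ with potential $\varphi=h/c$.

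Second, I would solve for $\varphi$ using the pseudoinverse. Set $\mathbf b=\mathbf e_u-\mathbf e_v\in\mathbf 1^\perp$. Then $L_G\varphi=\mathbf b$, and since $L_G L_G^+\mathbf b=\mathbf b$ (because $\mathbf b\in\operatorname{range} L_G=\mathbf 1^\perp$), it follows that $\varphi-L_G^+\mathbf b\in\ker L_G=\operatorname{span}(\mathbf 1)$. Hence
\[
\varphi(u)-\varphi(v)=\mathbf b^{\mathsf T}\varphi=\mathbf b^{\mathsf T}L_G^+\mathbf b,
\]
using $\mathbf b^{\mathsf T}\mathbf 1=0$. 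Finally, $\varphi(u)-\varphi(v)=(h(u)-h(v))/c=1/c=\reff(u\leftrightarrow v;G)$, which gives the claimed formula.

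The only step requiring care is justifying that the minimizer satisfies $L_G h\propto \mathbf e_u-\mathbf e_v$. This follows from first-order conditions: the derivative of $\mathcal E$ in directions $\mathbf e_w$ vanishes for $w\ne u,v$, giving $(L_Gh)(w)=0$. Because the entries of $L_Gh$ sum to zero, the $u$- and $v$-entries are then negatives of each other. If $G$ is disconnected, the same argument applies on the component containing $u$ and $v$, using the block structure of $L_G^+$.
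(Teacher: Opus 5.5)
Your proof is correct. The paper itself gives no argument for this lemma; it only cites \cite[Theorem~2.1]{ESMJK2011} and equation~(7) of \cite{GX2004}.

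Your derivation follows the standard linear-algebra route. You solve $L_G\varphi=\mathbf e_u-\mathbf e_v$, note that any two solutions differ by a multiple of $\mathbf 1$, and read off $\varphi(u)-\varphi(v)=\mathbf b^{\mathsf T}L_G^+\mathbf b$. What your version adds is that it starts from the variational definition of $\reff$ the paper actually uses. Via the first-order conditions, you show that the energy minimizer is a multiple of the potential of the unit current. This is the Dirichlet-principle link that a citation-based treatment leaves implicit, so your proof is self-contained relative to the paper's definitions; the citation, in exchange, costs nothing and covers weighted networks as well.

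Your connectivity caveat is also correct. Under the paper's definition, if $u$ and $v$ lie in different components then the infimum is $0$ and $\reff=\infty$, while the right-hand side is finite. So the statement as literally written (``a finite graph'') needs $u,v$ in the same component. This is harmless, since the paper applies the lemma only to the connected blocks $B_d$.

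The one thing you assert without justification is that the infimum is attained. For connected $G$ this is immediate: $\mathcal E$ is positive definite on $\{g:\ g(u)=g(v)=0\}$, so the minimization is coercive. You can also bypass existence entirely. Take $\varphi=L_G^+\mathbf b$ and $h^*=(\varphi-\varphi(v)\mathbf 1)/(\mathbf b^{\mathsf T}L_G^+\mathbf b)$. Since $L_Gh^*\propto\mathbf b$, every feasible $g$ satisfies $\mathcal E(h^*,g-h^*)=0$. Hence $\mathcal E(g,g)\ge\mathcal E(h^*,h^*)=1/(\mathbf b^{\mathsf T}L_G^+\mathbf b)$.
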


\subsubsection{Edmonds' matroid polytope theorem}\label{sec:matroid}
Matroid theory turns out to be a powerful tool when studying correlations \cite{HSW2022}. Our proof of Theorem~\ref{thm:UST-sharp} provides another example of applications of matroid theory in probability. 
\begin{definition}\label{def:matroid}
	A matroid $\sM=(E,\cI)$ consists of a finite ground set $E$ and a collection $\cI\subset 2^{E}$ of independent sets satisfying:
	\begin{enumerate}
		\item[(1)] $\emptyset\in\cI$;
		
		\item[(2)] if $A\in\cI$ and $B\subset A$, then $B\in\cI$;
		
		\item[(3)] if $A,B\in\cI$ and $|A|<|B|$, then $\exists e\in B\setminus A$ such that $A\cup\{e\}\in\cI$. 
	\end{enumerate}
The rank function $r:2^E\to\bbZ_{\ge0}$ associated $\sM$ is given by 
\[
r(S)\coloneq \max\left\{ |A|  \,\,\big|\,\, A\subset S,A\in\cI  \right\}\,.
\]
\end{definition}

A graphical matroid $\sM(G)$ of a  graph $G=(V,E)$ has ground set $E$; a set of edges is independent if and only if it contains no cycle (i.e., it is a forest). Its rank function 
\[
r(S)=|V|-c(S)\,,
\]
where $c(S)$  is the number of connected components in the subgraph $(V,S)$. 
Now we recall Edmonds' matroid polytope theorem \cite{Edmonds1970}.
\begin{theorem}\label{thm:Edmonds}
	Let $\sM=(E,\cI)$ be a matroid and $r$ be the rank function. Then
	the convex hull of characteristic vectors of independent sets of a matroid $\sM$ is the polytope
	\[
     \left\{ x\in\bbR^E \, \Big|\, x\ge0, \sum_{e\in S}x_e\leq r(S) \mathrm{ ~for~all~ }S\subset E  \right\}\,.
	\]
	In particular, if $\sM=\sM(G)$ is a graphical matroid associated with a graph $G=(V,E)$, then 
	\be\label{eq:Edmonds}
	\operatorname{conv}\{\mathbf 1_F: F\subset E \text{ is a forest}\}
	=
	\left\{x\in \mathbb R_{\ge 0}^E: \sum_{e\in S}x_e\le r(S)\text{ for all }S\subset E \right \}.
	\ee
\end{theorem}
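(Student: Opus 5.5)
The inclusion of the convex hull in the polytope is immediate. Let $P$ denote the right-hand polytope. If $A\in\cI$, then $\mathbf 1_A\ge 0$, and for every $S\subset E$ the set $A\cap S$ is independent by axiom (2). Hence $\sum_{e\in S}(\mathbf 1_A)_e=|A\cap S|\le r(S)$. Since $P$ is convex, $\operatorname{conv}\{\mathbf 1_A: A\in\cI\}\subset P$.

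For the reverse inclusion I would use a separating-hyperplane argument together with LP duality and the greedy algorithm. Suppose some $x^*\in P$ lies outside the convex hull. Since the hull is a compact convex set, there is a weight vector $c\in\bbR^E$ with $c\cdot x^*>\max_{A\in\cI}c\cdot \mathbf 1_A$. So it suffices to show the following: for every $c$, the maximum of $c\cdot x$ over $P$ is at most the weight $c(A)$ of some independent set $A$.

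First, coordinates with $c_e\le 0$ may be dropped. Define $c^+=\max(c,0)$. For $x\in P$ we have $c\cdot x\le c^+\cdot x$, and $c^+(A)=c(A\cap\{c>0\})$ with $A\cap\{c>0\}\in\cI$, so we may assume $c\ge 0$.

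Next, order $E=\{e_1,\dots,e_m\}$ so that $c_{e_1}\ge\cdots\ge c_{e_m}\ge 0$. Let $S_k=\{e_1,\dots,e_k\}$, with $S_0=\emptyset$. Run the greedy algorithm: scan the edges in this order and add $e_k$ whenever the current set stays independent. Using the exchange axiom (3), one checks by induction that the greedy set $A$ satisfies $|A\cap S_k|=r(S_k)$ for every $k$. Consequently $c(A)=\sum_k c_{e_k}\big(r(S_k)-r(S_{k-1})\big)$.

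For the matching upper bound, take $x\in P$ and apply Abel summation:
\[
c\cdot x=\sum_{k=1}^{m}(c_{e_k}-c_{e_{k+1}})\sum_{e\in S_k}x_e ,
\]
with the convention $c_{e_{m+1}}=0$. Every coefficient $c_{e_k}-c_{e_{k+1}}$ is nonnegative, and $\sum_{e\in S_k}x_e\le r(S_k)$ because $x\in P$. Therefore
\[
c\cdot x\le \sum_{k=1}^{m}(c_{e_k}-c_{e_{k+1}})\,r(S_k)=c(A),
\]
where the last equality is the same Abel summation run backwards. This shows $\max_P c\cdot x\le c(A)$, which contradicts the choice of $x^*$, so $P$ is contained in the convex hull.

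The only step needing care is the greedy claim $|A\cap S_k|=r(S_k)$. I would prove it by induction on $k$. Suppose it holds for $k-1$ and there were an independent $B\subset S_k$ with $|B|>|A\cap S_k|$. Then $|A\cap S_k|=|A\cap S_{k-1}|=r(S_{k-1})$, since $e_k$ was rejected. Axiom (3) supplies $e\in B\setminus A$ with $(A\cap S_k)\cup\{e\}\in\cI$. If $e\in S_{k-1}$, this independent subset of $S_{k-1}$ would have size $r(S_{k-1})+1$, which is impossible. If $e=e_k$, then $e_k$ could have been added without breaking independence, contradicting its rejection.

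The graphical statement \eqref{eq:Edmonds} is then the special case $\sM=\sM(G)$, whose independent sets are exactly the forests and whose rank function is $r(S)=|V|-c(S)$.
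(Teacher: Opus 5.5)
The paper does not prove this theorem. It quotes it from Edmonds \cite{Edmonds1970} and uses it as a black box. In fact only the nontrivial inclusion (polytope $\subseteq$ convex hull) is ever used, in the proof of the bound on $\bbE[\alpha\beta]$, to write $\boldsymbol{\alpha}$ as a mixture of forest indicators.

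Your argument is correct and proves that inclusion in a self-contained way. It proceeds in three steps:
\begin{itemize}
\item strict separation of $x^*$ from the hull of finitely many points;
\item reduction to $c\ge 0$, using $x\ge 0$ and the fact that $\cI$ is closed under subsets;
\item a direct weak-duality bound on the nested chain $S_1\subset\cdots\subset S_m$, matched by the greedy independent set.
\end{itemize}
In the last step, the Abel-summation coefficients $c_{e_k}-c_{e_{k+1}}\ge 0$ serve as an explicit dual certificate. So you never need LP duality as a theorem, only this certificate.

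One step should be tightened: the induction showing $|A\cap S_k|=r(S_k)$. You only treat the case where $e_k$ is rejected, writing ``since $e_k$ was rejected'' as if it were forced. You should add the accepted case. If $e_k$ is accepted, then $|A\cap S_k|=r(S_{k-1})+1\ge r(S_k)$, because deleting $e_k$ from any independent subset of $S_k$ leaves an independent subset of $S_{k-1}$. Hence no larger independent $B\subset S_k$ exists in that case either. With this one line added, the induction is complete.

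The citation buys the paper brevity. Your route buys self-containment. It also shows that the application needs only the greedy optimality of the graphic matroid (Kruskal's algorithm for maximum-weight forests), plus the fact that forests are closed under subsets.
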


\subsection{Proof of Theorem~\ref{thm:UST}}\label{sec:pf-UST}
\begin{proof}
	Since $X$ is chosen uniformly from $V$ and independently of the UST sample $\omega$, it suffices to prove that there exists a constant $C>0$ such that
	\be\label{eq:thm1goal}
	\sum_{x\in V}\EUST\big[\deg(x;\omega)^2\big]\leq C|V|\,.
	\ee
	Because $\PUST$ has the p-NC property, Corollary~\ref{cor:UST} gives
	\[
	\sum_{x\in V}\EUST\big[\deg(x;\omega)^2\big]\leq 2(|V|-1)+\sum_{e\sim f}\PUST\big[\omega(e)=1\big]\cdot\PUST\big[\omega(f)=1\big]\,.
	\]
	Using Kirchhoff's effective resistance formula \eqref{eq:K-formula} and the Cauchy--Schwarz inequality, we obtain
	\[
	\sum_{e\sim f}\PUST\big[\omega(e)=1\big]\cdot\PUST\big[\omega(f)=1\big]
	\leq \sum_{e\sim f}\frac{\reff(e)^2+\reff(f)^2}{2}\,.
	\]
	Since $G$ is simple and $d$-regular,
	\[
	\sum_{e\sim f}\frac{\reff(e)^2+\reff(f)^2}{2}=2(d-1)\sum_{e\in E}\reff(e)^2\,,
	\]
	where the sum $\sum_{e\sim f}$ is over ordered pairs of adjacent edges in $G$.
	
	Now partition $E$ into the sets $E_i$ from Corollary~\ref{cor:E_i}. Then
	\begin{align*}
	\sum_{e\in E}\reff(e)^2&=\sum_{i=0}^{M}\sum_{e\in E_i}\reff(e)^2\leq \sum_{i=0}^{M}|E_i|\cdot \epsilon_{i+1}^2\\
	&\leq |E|\epsilon_1^2+\sum_{i=1}^{M}\frac{|V|}{3^{i-1}}\epsilon_{i+1}^2\\
	&\leq\frac{d|V|}{2}\cdot \frac{3^2}{d^2}+|V|\sum_{i=1}^{M}\frac{1}{3^{i-1}}\cdot \frac{3^{2i+2}}{d^2}\leq \frac{45|V|}{d}\,.
	\end{align*}
	Putting everything together,
	\begin{align*}
	\sum_{x\in V}\EUST\big[\deg(x;\omega)^2\big]
	&\leq 2(|V|-1)+90|V|\le 92|V|\,.
	\end{align*}
	Thus \eqref{eq:thm1goal} holds with $C=92$, completing the proof. 
\end{proof}

\subsection{Proof of Theorem~\ref{thm:UST-sharp}}
\label{sec:pf-USTsharp}
In this section, we establish that the sharp constant for the upper bound of the second moment is exactly $6$. The proof has two parts: first proving the upper bound $6$, then constructing a sequence of regular graphs where the second moment tends to $6$, showing sharpness.

\begin{proposition}\label{prop:appendix-UST-sharp}
	Under the assumptions of Theorem~\ref{thm:UST},
\be\label{eq:thmUSTsharp}
\bbE\big[\deg(X;\omega)^2\big] < 6,
\ee
\end{proposition}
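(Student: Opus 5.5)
The plan is to bound $\sum_{x\in V}\EUST[\deg(x;\omega)^2]$ exactly through \eqref{eq:relation-1-ad} of Lemma~\ref{lem:2nd-pNC}. Since $\sum_x\EUST[\deg(x;\omega)^2]=2(n-1)+\sum_{e\sim f}\PUST[e,f\in\eta(\omega)]$, it suffices to show
\[
\sum_{e\sim f}\PUST[e,f\in\eta(\omega)]<4n+2 .
\]
For each vertex $x$ and each ordered pair of neighbours $y\neq z$ of $x$, Lemma~\ref{lem:appendix-transfer-current} gives the exact value $r_1r_2-\frac14(r_1+r_2-t)^2$. Here $r_1=\reff(x\leftrightarrow y)$, $r_2=\reff(x\leftrightarrow z)$ and $t=\reff(y\leftrightarrow z)$.

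First I would reparametrize every edge resistance as a baseline plus a correction, $\reff(e)=b+a_e$ with $a_e\ge0$. The baseline is $b=\frac{2}{d+1}$, which is the minimum possible for neighbours in a simple $d$-regular graph; I would prove this by Nash-Williams, or by noting that it is attained in $K_{d+1}$. Foster's theorem \eqref{eq:Foster} then fixes the linear budget:
\[
\sum_e a_e=(n-1)-\frac{nd}{2}\cdot\frac{2}{d+1}=\frac{n}{d+1}-1 .
\]
Second, I would bound $t$ from above. By the triangle inequality for effective resistance, $t\le r_1+r_2$, and the subtracted square needs a quantitative version of this. The target is an upper bound of the form $t\le 2b'+(\text{corrections})$ with $b'\approx\frac{2}{d}$, so that $r_1+r_2-t\ge 2b-2b'+a_1+a_2-(\dots)$. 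In the baseline case ($K_{d+1}$-like neighbourhoods) one has $r_1=r_2=t=b$. Then $r_1r_2-\frac14(r_1+r_2-t)^2=\frac34b^2$, and summing over the $nd(d-1)$ ordered pairs gives $3n\frac{d(d-1)}{(d+1)^2}+2\approx 5n$, comfortably below $6n$. So the subtracted term is essential: dropping it (the p-NC bound) loses too much.

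Expanding $r_1r_2-\frac14(r_1+r_2-t)^2$ in the $a$'s produces three kinds of terms. The constant terms are of order $b^2$ and give about $3n$. The linear terms are of the form $b\sum_{e\ni x}a_e$ times $O(d)$; after summing over $x$ and using Foster's budget they contribute $O(n)$ with an explicit small constant. The quadratic terms are of the form $\sum_x\sum_{e\ne f\ni x}a_ea_f$ (together with the $t$-corrections). These are controlled via Edmonds' theorem. For every $S\subset E$ we have $\sum_{e\in S}a_e\le\sum_{e\in S}\reff(e)=\EUST|\eta(\omega)\cap S|\le r(S)$, so $a$ lies in the forest polytope \eqref{eq:Edmonds}. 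I would write $a=\sum_j\mu_j\mathbf 1_{F_j}$ as a convex combination of forests. Then I would bound the quadratic form by combining that decomposition with the pointwise bound $a_e\le 1-b$ and the linear budget. The aim is to reduce to deterministic degree sums $\sum_x\deg_F(x)\cdot(\dots)$ over forests, which are at most $2(n-1)$.

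Finally I would add up the constant, linear and quadratic contributions and check that the total stays strictly below $4n+2$, treating small $d$ separately if the asymptotic constants are tight. The main obstacle is the quadratic step. The form $\sum_x(\sum_{e\ni x}a_e)^2$ is not bounded by its values at forest vertices, since stars are forests with large degree sums of squares. So the bound must interlace the polytope decomposition with the cap $a_e\le1-b$, and it must also use the negative term $-\frac14(\cdots)^2$ to absorb large corrections. I would first try the bound $a_ea_f\le\frac12\bigl(a_e\,\min(a_f,1-b)+a_f\,\min(a_e,1-b)\bigr)$, which linearizes the form in one factor, and then apply the forest decomposition to the remaining linear factor.
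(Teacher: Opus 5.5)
Your skeleton (reducing to $\sum_{e\sim f}\PUST[e,f\in\eta(\omega)]<4n+2$, the transfer-current formula, the baseline $b=\frac{2}{d+1}$ with Foster's budget $\sum_e a_e=\frac{n-d-1}{d+1}$, and placing $(a_e)$ in the forest polytope) is exactly the paper's. However, the two steps that decide the constant are left open, and the routes you sketch for them would fail. The final margin is only $O(n/d)$, and Proposition~\ref{prop:appendix-tightness} shows $6$ is sharp, so no lossy estimate is affordable.

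For $t$, no useful pointwise upper bound exists. One has $t=r_1+r_2$ whenever $x$ separates $y$ from $z$ (e.g.\ at the centre $c$ of the graphs $G_d$). Your stated target $t\le 2b'+\cdots$ with $b'\approx 2/d$ also makes the lower bound $2b-2b'$ nonpositive already at the baseline, where $r_1+r_2-t=b$. The missing idea is to linearize the concave term at $s=b$: with $s=r_1+r_2-t$,
\[
-\tfrac14 s^2\le -\tfrac14 b^2-\tfrac b2\,(s-b),
\]
so only $\bbE[s]$ is needed. Moreover $s/2=Y(\vec e,\vec f)$ is a transfer current, and its wedge-average is computed exactly from Kirchhoff's node law $\sum_{z\sim x,\,z\neq y}Y\big((x,y),(x,z)\big)=1-\reff(x\leftrightarrow y)$. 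This is how the paper obtains $\bbE[\gamma]=\frac{4(n-d-1)}{n(d-1)(d+1)}$ in Lemma~\ref{lem:appendix-alpha-gamma}. The remainder $-\frac14(\alpha+\beta-\gamma)^2$ is simply discarded, so the negative term is not needed to absorb large corrections.

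For the quadratic term, your inequality $a_ea_f\le\frac12\bigl(a_e\min(a_f,1-b)+a_f\min(a_e,1-b)\bigr)$ is an identity, because $\reff(f)\le1$ forces $a_f\le1-b$. Genuinely linearizing with the cap gives $\sum_x\sum_{e\neq f\ni x}a_ea_f\le 2(d-1)(1-b)\sum_ea_e\approx 2n$, which together with the $\approx 3n$ constant term overshoots $4n+2$.

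Your worry about stars is misplaced. Let $A_x=\sum_{y\sim x}a_{xy}$ and let $\mathcal F$ be the random forest with $\bbP[e\in\mathcal F]=a_e$. Jensen gives $\sum_xA_x^2\le\bbE\sum_x\deg(x;\mathcal F)^2$. Since forest degrees are at most $d$, every forest satisfies $\sum_x\deg(x;F)^2\le 2|F|+d(|F|-c)\le(d+2)|F|$, where $c$ is the number of nontrivial components. A star has ratio $d+1$, which is harmless because $\bbE|\mathcal F|=\sum_e a_e=\frac{n-d-1}{d+1}$ is small. Adding $\sum_{y\sim x}a_{xy}^2\ge A_x^2/d$ bounds the quadratic part by $\frac{(d-1)(d+2)}{d(d+1)}(n-d-1)<n$. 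The three contributions then give $\sum_{e\sim f}\PUST[e,f\in\eta(\omega)]\le n\bigl(4-\frac{d+7}{(d+1)^2}-\frac{2}{d(d+1)}\bigr)<4n$ for every $d\ge3$, with no separate small-$d$ analysis.

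Finally, $a_e\ge0$ needs an actual argument. Attainment in $K_{d+1}$ says nothing about minimality. Nash-Williams is too weak here: every cutset separating adjacent $u,v$ contains $uv$, so no two are disjoint. Instead use Dirichlet's principle with the test function equal to $\frac12$ off $\{u,v\}$, whose energy is $\frac{d+1}{2}$.
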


\begin{proposition}\label{prop:appendix-tightness}
	There exists a sequence $G_d$ of finite simple connected $d$-regular graphs with
	$d\to\infty$ such that, if $\omega_d\sim\UST(G_d)$ and $X_d$ is uniformly chosen from
	$V(G_d)$ independently of $\omega_d$, then
	\[
	\lim_{d\to\infty}\bbE\big[\deg(X_d;\omega_d)^2\big]=6.
	\]
\end{proposition}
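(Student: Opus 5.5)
The construction is the one described in the proof sketch: many dense blocks glued to a central vertex by bridges. The bridges sit in every spanning tree, so the central vertex contributes an extra $1$ to the average squared degree. Each block behaves like $K_m$, contributing the UST value $5$ (Lemma~\ref{lem: second moment of degree in UST}). The limit is then $1+5=6$.

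\emph{Construction.} Take $d$ odd and set $m\coloneq d+2$. A block $B$ is obtained from $K_m$ with a distinguished vertex $b$ by deleting an edge set $S$, where $S$ is a path $u\,b\,v$ of length two together with a perfect matching of the remaining $m-3=d-1$ vertices. This matching exists because $d-1$ is even. Every vertex of $B$ other than $b$ then has degree $(m-1)-1=d$, and $b$ has degree $d-1$. The graph $H=(V(K_m),S)$ has maximum degree $2$, and $B$ is clearly connected, so Lemma~\ref{lem:Lalpacian-example} applies with $G=B$. Now let $G_d$ consist of a central vertex $c$ and $d$ disjoint copies of $B$, with $c$ joined to the vertex $b$ of each copy. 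Then $G_d$ is simple, connected and $d$-regular, with $n=1+dm$ vertices.

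\emph{Reduction to blocks.} Each edge $\{c,b\}$ is a bridge, so it lies in every spanning tree. A spanning tree of $G_d$ is therefore exactly these bridges together with independent spanning trees of the blocks. Hence the UST restricted to each block is $\UST(B)$, and $\deg(c;\omega)=d$ deterministically. For a block vertex $x$, $\deg(x;\omega)=\deg(x;\omega_B)+\mathbf 1_{x=b}$. It follows that
\[
\sum_{x}\bbE\big[\deg(x;\omega)^2\big]=d^2+d\Big(\sum_{x\in B}\bbE\big[\deg(x;\omega_B)^2\big]+2\,\bbE[\deg(b;\omega_B)]+1\Big).
\]
The middle correction is $O(d)$ in total, since $\deg(b;\omega_B)\le d-1$ gives $O(1)$ per block. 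Dividing by $n\sim d^2$, it suffices to show
\[
\sum_{x\in B}\bbE\big[\deg(x;\omega_B)^2\big]=5m+O(1).
\]

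\emph{Block computation, the main step.} By \eqref{eq:relation-1-ad}, this sum equals $2(m-1)+\sum_{e\sim f}\PUST[e,f\in\omega_B]$. By Lemma~\ref{lem:appendix-transfer-current}, each summand equals $r_1r_2-\tfrac14(r_1+r_2-t)^2$. To control the resistances, combine Lemma~\ref{lem:res-Lap} with Lemma~\ref{lem:Lalpacian-example}. For $\mathbf x=\mathbf e_u-\mathbf e_w\in\mathbf 1^\perp$, expand in a Neumann series:
\[
L_B^+\mathbf x=\frac1m\Big(I+\frac1m L_H+\frac1{m^2}L_H^2\Big(I-\frac1mL_H\Big)^{-1}\Big)\mathbf x.
\]
Since $\|L_H\|\le 4$, this gives
\[
\reff(u\leftrightarrow w;B)=\frac{2}{m}+\frac{1}{m^2}\,\mathbf x^{\mathsf T}L_H\mathbf x+O(m^{-3})=\frac2m+O(m^{-2}),
\]
with constants uniform over all pairs $u,w$. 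Substituting, $r_1r_2=4/m^2+O(m^{-3})$ and $r_1+r_2-t=2/m+O(m^{-2})$. Hence every adjacent pair satisfies
\[
\PUST[e,f\in\omega_B]=\frac{3}{m^2}+O(m^{-3}).
\]
The number of ordered adjacent pairs in $B$ is $\sum_x\deg_B(x)(\deg_B(x)-1)=m(m-2)(m-3)+O(m)=m^3+O(m^2)$. Their total contribution is therefore $3m+O(1)$. Adding $2(m-1)$ gives $5m+O(1)$, as required.

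\emph{Conclusion.} Plugging back in,
\[
\bbE\big[\deg(X_d;\omega_d)^2\big]=\frac{d^2+5dm+O(d)}{1+dm}\longrightarrow 1+5=6 .
\]
The step needing the most care is the uniformity of the $O(m^{-3})$ error in the Neumann expansion. It is uniform because $\|L_H\|\le 4$ bounds every term of the series independently of the pair of vertices, and because $\|\mathbf x\|^2=2$ for every $\mathbf x=\mathbf e_u-\mathbf e_w$.
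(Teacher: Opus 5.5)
Your approach is the paper's own: the same one-port block ($K_{d+2}$ minus a two-edge path at the port and a perfect matching) and the same bridge factorization of the UST. The block analysis also matches: a uniform estimate $\reff=2/m+O(m^{-2})$ from Lemma~\ref{lem:Lalpacian-example} and a Neumann series, the transfer-current value $3/m^2+O(m^{-3})$ per ordered adjacent pair, and the pair count $m^3+O(m^2)$.

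There is a gap in the port correction. You claim that $\deg(b;\omega_B)\le d-1$ makes $2\,\bbE[\deg(b;\omega_B)]+1$ equal to $O(1)$ per block. That bound only gives $2\,\bbE[\deg(b;\omega_B)]+1\le 2d-1$, which is $O(d)$ per block and $O(d^2)$ over the $d$ blocks. After dividing by $n=1+dm\sim d^2$, this leaves an order-one term you have not shown to vanish. As written, your argument only places the limit points of $\bbE[\deg(X_d;\omega_d)^2]$ in $[6,8]$, not at $6$.

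Your own resistance estimate repairs this. By Kirchhoff's formula \eqref{eq:K-formula}, $\bbE[\deg(b;\omega_B)]=\sum_{y\sim b}\reff(b\leftrightarrow y;B)=(m-3)\big(2/m+O(m^{-2})\big)=2+O(m^{-1})$. This is exactly how the paper handles the port in \eqref{eq:deg-o-Td}. Each block's correction is then $5+O(m^{-1})$, the total correction is $O(d)$, and the rest of your argument goes through unchanged.
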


\begin{proof}[Proof of Theorem~\ref{thm:UST-sharp}]
	This following immediately from Proposition~\ref{prop:appendix-UST-sharp} and Proposition~\ref{prop:appendix-tightness}.
\end{proof}

\subsubsection{Proof of the second moment bound $6$} \label{sec:UST_upper}

Assume that $G$  is a  finite, connected, regular graph with degree $d$. 
If $d\in\{1,2\}$, then   \eqref{eq:thmUSTsharp}
is trivial since $\deg(X;\omega)^2\leq d^2\leq 4$. Hence we assume throughout that $d\ge 3$.

\begin{lemma}\label{lem:appendix-second-moment}
	Assume  $(G,\omega,X)$ satisfies the assumptions of Theorem~\ref{thm:UST}, and conditional on $X$ let $Y, Z$ be two distinct neighbors of $X$ chosen uniformly among all ordered pairs of distinct neighbors (independent of $\omega$). Write $n=|V|$ for simplicity. Define
	\[
	p_1 := \mathbb{P}\big[ \{X,Y\} \in \eta(\omega) \text{ and } \{X,Z\} \in \eta(\omega) \big].
	\]
	Then
	\[
	\mathbb{E}\big[\deg(X;\omega)^2\big] = 2 - \frac{2}{n} + d(d-1)p_1.
	\]
\end{lemma}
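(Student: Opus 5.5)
The plan is to reduce the statement to the identity \eqref{eq:relation-1-ad} of Lemma~\ref{lem:2nd-pNC}, applied with $\bP=\PUST$. The UST measure is supported on $\sT(G)$ and is $\mathrm{Aut}(G)$-invariant, so the lemma applies. Averaging over the uniformly chosen vertex $X$ gives
\[
\bbE\big[\deg(X;\omega)^2\big]=\frac{1}{n}\sum_{x\in V}\EUST\big[\deg(x;\omega)^2\big]=\frac{2(n-1)}{n}+\frac{1}{n}\sum_{e\sim f}\PUST\big[\omega(e)=\omega(f)=1\big].
\]
The first term is already $2-\frac{2}{n}$, so it only remains to identify the second term with $d(d-1)p_1$.

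For this, write out $p_1$ by conditioning on $X$ and on the ordered pair $(Y,Z)$. Since $G$ is $d$-regular, the triple $(X,Y,Z)$ is uniform over the $n\,d(d-1)$ triples with $x\in V$ and $y\neq z$ both neighbors of $x$. It is independent of $\omega$, so
\[
p_1=\frac{1}{n\,d(d-1)}\sum_{x\in V}\sum_{\substack{y\neq z\\ y,z\sim x}}\PUST\big[\{x,y\},\{x,z\}\in\eta(\omega)\big].
\]

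The key step, and the only place requiring care, is to check that the double sum equals $\sum_{e\sim f}\PUST[\omega(e)=\omega(f)=1]$ over ordered pairs of distinct adjacent edges. In a simple graph, each ordered pair $(e,f)$ of distinct edges sharing an endpoint shares exactly one endpoint $x$. Such a pair therefore corresponds bijectively to a triple $(x,y,z)$ with $e=\{x,y\}$, $f=\{x,z\}$ and $y\neq z$. Here I would use that $G$ is simple, as the paper implicitly assumes in Section~\ref{sec:pf-UST}. Two distinct edges cannot share both endpoints without being parallel, so there is no double counting.

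Substituting this identification gives $\frac{1}{n}\sum_{e\sim f}\PUST[\cdots]=d(d-1)p_1$, which yields the claimed formula. I expect no real obstacle; the whole argument is bookkeeping of this bijection between ordered adjacent edge pairs and vertex–neighbor triples.
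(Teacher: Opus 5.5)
Your proposal is correct and follows essentially the same route as the paper. Like the paper, you apply identity \eqref{eq:relation-1-ad} to $\PUST$, write $p_1$ as the uniform average over the $n\,d(d-1)$ vertex–ordered-neighbor-pair triples, identify that sum with $\sum_{e\sim f}\PUST[\omega(e)=\omega(f)=1]$ using simplicity of $G$, and then average over $X$.
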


\begin{proof}
	We start from the identity valid for the UST on any finite, connected graph (see \eqref{eq:relation-1-ad}):
	\be\label{eq:A1}
	\sum_{v\in V} \mathbb{E}_{\mathrm{UST}}\big[\deg(v;\omega)^2\big] = 2(n-1) + \sum_{e\sim f} \mathbb{P}_{\mathrm{UST}}\big[e,f\in \eta(\omega)\big],
	\ee
	where the sum $\sum_{e\sim f}$ runs over all ordered pairs of distinct adjacent edges.
	
	Fix a vertex $v$. There are exactly $d(d-1)$ ordered pairs of distinct edges incident to $v$. Then by definition of $p_1$ (with $X$ uniform and $Y,Z$ uniform ordered neighbors given $X$),
	\[
	p_1 = \frac{1}{n}\sum_{v\in V} \frac{1}{d(d-1)} \sum_{\substack{\text{ordered }(e,f)\\ \text{incident to }v}} \mathbb{P}_{\mathrm{UST}}[e,f\in \eta(\omega)].
	\]
	Multiplying both sides by $nd(d-1)$ yields
	\[
	n d(d-1) p_1 = \sum_{v\in V} \sum_{\substack{\text{ordered }(e,f)\\ \text{incident to }v}} \mathbb{P}_{\mathrm{UST}}[e,f\in \eta(\omega)] = \sum_{e\sim f} \mathbb{P}_{\mathrm{UST}}\big[e,f\in \eta(\omega)\big].
	\]
	Substituting this into the identity \eqref{eq:A1} yields
	\[
	\sum_{v\in V} \mathbb{E}_{\mathrm{UST}}\big[\deg(v;\omega)^2\big] = 2(n-1) + n d(d-1) p_1.
	\]
	Averaging over the uniformly chosen vertex $X$ (independent of $\omega$) yields the claimed formula
	\[
	\mathbb{E}\big[\deg(X;\omega)^2\big] = \frac{1}{n}\sum_{v\in V} \mathbb{E}_{\mathrm{UST}}\big[\deg(v;\omega)^2\big] = 2 - \frac{2}{n} + d(d-1)p_1. 	\qedhere
	\]
\end{proof}

We next record the law of the ordered wedge
$(Y,X,Z)$ via a simple random-walk interpretation.
\begin{lemma}\label{lem:appendix-wedge-law}
	Let $(X_0,X_1,X_2)$ be a two-step simple random walk on $G$, started from the uniform
	distribution on $V$. Then the ordered triple $(Y,X,Z)$ in Lemma~\ref{lem:appendix-second-moment} has the same distribution as $(X_0,X_1,X_2)$ conditioned on the event $\{X_2\neq X_0\}$.
	Moreover,
	\[
	\bbP[X_2\neq X_0]=1-\frac{1}{d}.
	\]
\end{lemma}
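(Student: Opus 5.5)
We compare the two laws directly on ordered triples of vertices. The triple $(Y,X,Z)$ from Lemma~\ref{lem:appendix-second-moment} is built as follows: $X$ is uniform on $V$, and given $X=v$, the pair $(Y,Z)$ is uniform among the $d(d-1)$ ordered pairs of distinct neighbors of $v$ (the graph is simple). So for any triple $(y,v,z)$ with $y,z$ distinct neighbors of $v$,
\[
\bbP[(Y,X,Z)=(y,v,z)]=\frac{1}{n}\cdot\frac{1}{d(d-1)},
\]
and every other triple has probability $0$.

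For the walk, $X_0$ is uniform. Each step picks one of the $d$ neighbors uniformly, and there is exactly one edge to each neighbor since $G$ is simple. Hence
\[
\bbP[(X_0,X_1,X_2)=(y,v,z)]=\frac1n\cdot\frac1d\cdot\frac1d
\]
whenever $y\sim v$ and $v\sim z$, and $0$ otherwise. By $d$-regularity, $v$ has exactly $d$ neighbors, so the $d$-to-$1$ correspondence gives no bias. We use simplicity of $G$ throughout, as the paper does in the proof of Theorem~\ref{thm:UST}.

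Next compute $\bbP[X_2\neq X_0]$. Given $X_1$, the step to $X_2$ is uniform over the $d$ neighbors of $X_1$, one of which is $X_0$. Therefore $\bbP[X_2=X_0\mid X_0,X_1]=1/d$, and so $\bbP[X_2\neq X_0]=1-1/d$. Conditioning on $\{X_2\neq X_0\}$ restricts the walk to triples with $y\neq z$, each with probability
\[
\frac{1}{nd^2}\Big/\Big(1-\frac1d\Big)=\frac{1}{nd(d-1)},
\]
which matches the law of $(Y,X,Z)$ above.

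There is no real obstacle; the only point needing care is to invoke simplicity and regularity so that the transition probabilities are exactly $1/d$ per neighbor.
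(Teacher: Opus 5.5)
Your proposal is correct and follows essentially the same route as the paper. Both arguments compute the point masses $\frac{1}{nd(d-1)}$ for $(Y,X,Z)$ and $\frac{1}{nd^2}$ for the walk, use simplicity and $d$-regularity to get $\bbP[X_2=X_0\mid X_0,X_1]=1/d$, and check that conditioning on $\{X_2\neq X_0\}$ turns $\frac{1}{nd^2}$ into $\frac{1}{nd(d-1)}$ on exactly the wedge triples.
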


\begin{proof}Write $n=|V|$ for simplicity.
	Fix a triple $(y,x,z)$ such that $y\sim x$, $z\sim x$, and $y\neq z$. By definition,
	\[
	\bbP[Y=y,X=x,Z=z]
	=
	\frac{1}{n}\cdot \frac{1}{d(d-1)}.
	\]
	On the other hand,
	\[
	\bbP[X_0=y,X_1=x,X_2=z]
	=
	\frac{1}{n}\cdot \frac{1}{d}\cdot \frac{1}{d}
	=
	\frac{1}{nd^2}.
	\]
	Since $G$ is simple and $d$-regular, conditioned on $X_1=x$, the probability that the
	second step returns to $X_0$ equals $1/d$. Hence
	\[
	\bbP[X_2\neq X_0]=1-\frac{1}{d},
	\]
	and therefore
	\[
	\bbP\big[(X_0,X_1,X_2)=(y,x,z)\mid X_2\neq X_0\big]
	=
	\frac{1/(nd^2)}{1-1/d}
	=
	\frac{1}{nd(d-1)}.
	\]
	This matches the law of $(Y,X,Z)$.
\end{proof}

For the random ordered wedge $(Y,X,Z)$ in Lemma~\ref{lem:appendix-second-moment}, define
\[
r_1\coloneq \reff(X\leftrightarrow Y;G),\qquad
r_2\coloneq \reff(X\leftrightarrow Z;G),\qquad
t\coloneq \reff(Y\leftrightarrow Z;G).
\]
Set
\[
a\coloneq \frac{2}{d+1},
\qquad
r_1=a+\alpha,
\quad
r_2=a+\beta,
\quad
t=a+\gamma.
\]
Nachmias and Peres noted a useful lower bound on the effective resistance between the two endpoints of an edge (see \cite[Eq.~(6)]{NP2022}): for any two distinct vertices $u,v$ in a finite graph,
\[
\reff(u\leftrightarrow v;G)
\ge
\frac{1}{\deg(u)+1}+\frac{1}{\deg(v)+1}.
\]
Since $G$ is $d$-regular, this implies $\alpha,\beta,\gamma\ge0$.

\begin{lemma}\label{lem:appendix-alpha-gamma}
	Write $n=|V|$ for simplicity. With the notation above, 
	we have
	\[
	\bbE[\alpha]
	=
	\frac{2(n-d-1)}{nd(d+1)}
	\]
	and
	\[
	\bbE[\gamma]
	=
	\frac{4(n-d-1)}{n(d-1)(d+1)}.
	\]
\end{lemma}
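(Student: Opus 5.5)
The plan is to compute $\bbE[r_1]$ and $\bbE[t]$ exactly and then subtract $a=\frac{2}{d+1}$, since $\alpha=r_1-a$ and $\gamma=t-a$. The key tool is Lemma~\ref{lem:appendix-wedge-law}, which represents $(Y,X,Z)$ as a two-step simple random walk $(X_0,X_1,X_2)$ from the uniform (stationary) distribution, conditioned on $\{X_2\neq X_0\}$. Throughout, let $P=I-\frac1d L_G$ denote the transition matrix, and let $\Pi=L_GL_G^+$ be the orthogonal projection onto $\mathbf 1^\perp$, which has trace $n-1$ because $G$ is connected (Proposition~\ref{prop:Laplacian}(3)).

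\textbf{Computing $\bbE[\alpha]$.} The pair $(X,Y)=(X_1,X_0)$ is a uniformly random oriented edge. Conditioning on $X_2\neq X_0$ does not change its law, since for every $(X_0,X_1)$ the conditional probability of this event is $1-\frac1d$. Hence, by Foster's theorem \eqref{eq:Foster},
\[
\bbE[r_1]=\frac{1}{|E|}\sum_{e\in E}\reff(e)=\frac{2(n-1)}{nd}.
\]
Subtracting $a$ gives
\[
\bbE[\alpha]=\frac{2(n-1)(d+1)-2nd}{nd(d+1)}=\frac{2(n-d-1)}{nd(d+1)},
\]
which is the first claim.

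\textbf{Computing $\bbE[t]$.} Write $t=\reff(X_0\leftrightarrow X_2)$. On the event $\{X_2=X_0\}$ this quantity is $0$, so
\[
\bbE[t]=\frac{d}{d-1}\,\bbE_{\mathrm{SRW}}\big[\reff(X_0\leftrightarrow X_2)\big],
\]
where $\bbE_{\mathrm{SRW}}$ is taken over the unconditioned walk. By Lemma~\ref{lem:res-Lap} and the symmetry of $P^2$,
\[
\bbE_{\mathrm{SRW}}\big[\reff(X_0\leftrightarrow X_2)\big]=\frac1n\sum_{u,v}(P^2)_{uv}\big(L^+_{uu}+L^+_{vv}-2L^+_{uv}\big)=\frac2n\Big(\operatorname{tr}L^+-\operatorname{tr}(P^2L^+)\Big).
\]
Next expand
\[
P^2L^+=L^+-\tfrac2d\,\Pi+\tfrac1{d^2}L_G.
\]
Using $\operatorname{tr}\Pi=n-1$ and $\operatorname{tr}L_G=nd$, this gives
\[
\operatorname{tr}L^+-\operatorname{tr}(P^2L^+)=\frac{2(n-1)}{d}-\frac nd=\frac{n-2}{d}.
\]
Therefore $\bbE[t]=\frac{2(n-2)}{n(d-1)}$. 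Subtracting $a$ yields
\[
\bbE[\gamma]=\frac{2(n-2)(d+1)-2n(d-1)}{n(d-1)(d+1)}=\frac{4(n-d-1)}{n(d-1)(d+1)}.
\]

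As a consistency check, the same trace computation with $P$ in place of $P^2$ reproduces Foster's value $\bbE[r_1]=\frac{2(n-1)}{nd}$.

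\textbf{Expected obstacle.} The only delicate point is the trace identity for $\bbE[t]$. One must track correctly that $L^+$ annihilates $\mathbf 1$, so that $L_GL_G^+=\Pi$ has trace $n-1$ rather than $n$, and that $L_G^2L_G^+=L_G$. One must also handle the conditioning on $X_2\neq X_0$ correctly, which is justified because the excluded event contributes $0$ to the unconditioned expectation.
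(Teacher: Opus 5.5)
Your proof is correct. For $\bbE[\alpha]$ it is the same Foster's-theorem computation as in the paper. For $\bbE[\gamma]$ you take a different route.

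The paper does not use the random-walk representation for $\bbE[\gamma]$. It works with the transfer-current kernel $K=\frac{r_1+r_2-t}{2}=Y(\vec e,\vec f)$ and applies Kirchhoff's node law $\sum_{z\sim x}Y(\vec e,\vec f_z)=1$ at $x$. This gives $\sum_{z\sim x,\,z\ne y}Y(\vec e,\vec f_z)=1-\reff(x\leftrightarrow y)$, and averaging yields $\bbE[K]=\frac{nd-2(n-1)}{nd(d-1)}$. The paper then reads off $\bbE[\gamma]=a+2\bbE[\alpha]-2\bbE[K]$.

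You instead write $\bbE[t]=\frac{d}{d-1}\cdot\frac{2}{n}\operatorname{tr}\big((I-P^2)L_G^+\big)$ and evaluate the trace using $L_GL_G^+=\Pi$ and $L_G^2L_G^+=L_G$. Every step checks, and your value $\bbE[t]=\frac{2(n-2)}{n(d-1)}$ agrees with the paper's $2\bbE[r_1]-2\bbE[K]$.

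The two arguments rest on the same identities in different guises. The node law is $(\mathbf e_x-\mathbf e_y)^{\mathsf T}L_G^+L_G\mathbf e_x=1$, i.e.\ $L_G^+L_G=\Pi$ applied locally. Foster's theorem is $\operatorname{tr}\Pi=n-1$, and the count $nd$ of oriented edges is $\operatorname{tr}L_G$.

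The paper's version stays in electrical-network language and reuses the kernel $K$ that already appears in the determinant of Lemma~\ref{lem:appendix-transfer-current}. Yours is more mechanical but extends at once to longer walks. Since $I-P^k=\frac{1}{d}L_G\sum_{j<k}P^j$, the unconditioned walk satisfies $\bbE[\reff(X_0\leftrightarrow X_k)]=\frac{2}{nd}\sum_{j=0}^{k-1}\big(\operatorname{tr}P^j-1\big)$. For $k=2$, where $\operatorname{tr}P=0$ because $G$ is simple, this is exactly your $\frac{2(n-2)}{nd}$.
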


\begin{proof}
	The pair $(X,Y)$ is a uniformly chosen oriented edge. Hence, by Foster theorem \eqref{eq:Foster},
	\[
	\bbE[r_1]
	=
	\frac{1}{nd}\sum_{x\in V}\sum_{y\sim x}\reff(x\leftrightarrow y;G)
	=
	\frac{2}{nd}\sum_{e\in E}\reff(e)
	=
	\frac{2(n-1)}{nd}.
	\]
	Subtracting $a=2/(d+1)$ gives the formula for $\bbE[\alpha]$.
	
	It remains to compute $\bbE[\gamma]$. Let
	\[
	K\coloneq \frac{r_1+r_2-t}{2}.
	\]
	This is the transfer-current kernel between the two oriented edges $(X,Y)$ and $(X,Z)$.
	Fix an oriented edge $\vec{e}=(x,y)$ and write $\vec{f}_z=(x,z)$ for an oriented edge from $x$ to a
	neighbor $z$. Kirchhoff's node law for the unit current sent from $x$ to $y$
	gives
	\[
	\sum_{z\sim x} Y(\vec{e},\vec{f}_z)=1,
	\]
	Since $Y(\vec{e},\vec{e})=\reff(x\leftrightarrow y;G)$, we have 
	\[
	\sum_{\substack{z\sim x\\ z\ne y}} Y(\vec{e},\vec{f}_z)
	=
	1-\reff(x\leftrightarrow y;G).
	\]
	Averaging over the uniformly chosen oriented edge $(X,Y)$ and then over the uniformly
	chosen $Z\ne Y$, we get
	\[
	\bbE[K]
	=
	\frac{1}{nd(d-1)}
	\sum_{x\in V}\sum_{y\sim x}\bigl(1-\reff(x\leftrightarrow y;G)\bigr)
	=
	\frac{nd-2(n-1)}{nd(d-1)}.
	\]
	Since
	\[
	2K=a+\alpha+\beta-\gamma
	\]
	and $\alpha,\beta$ have the same distribution,
	\[
	\bbE[\gamma]=a+2\bbE[\alpha]-2\bbE[K].
	\]
	Substitution gives the displayed formula for $\bbE[\gamma]$.
\end{proof}

\begin{lemma}\label{lem:appendix-alpha-beta}
	We have
	\[
	\bbE[\alpha\beta]
	\le
	\frac{(d+2)(n-d-1)}{n d^2(d+1)}.
	\]
\end{lemma}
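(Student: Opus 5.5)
The plan is to write $\bbE[\alpha\beta]$ as a quadratic form in the edge corrections, show via Foster and Rayleigh monotonicity that the rescaled corrections lie in the forest polytope of Theorem~\ref{thm:Edmonds}, and then bound the form using forest degree bounds. The main idea is the Edmonds step.

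\emph{Quadratic form.} For each edge $e$ put $\alpha_e\coloneq \reff(e)-a\ge 0$. By the uniform law of the ordered wedge $(Y,X,Z)$ (Lemma~\ref{lem:appendix-second-moment}),
\[
\bbE[\alpha\beta]=\frac{1}{nd(d-1)}\sum_{x\in V}\Big[\Big(\sum_{e\ni x}\alpha_e\Big)^2-\sum_{e\ni x}\alpha_e^2\Big]
=\frac{1}{nd(d-1)}\sum_{x\in V}\sum_{\substack{e\neq f\\ e,f\ni x}}\alpha_e\alpha_f .
\]
Foster's theorem \eqref{eq:Foster} gives the total mass
\[
\sum_{e}\alpha_e=(n-1)-\frac{nd}{2}\cdot\frac{2}{d+1}=\frac{n-d-1}{d+1}.
\]
This is exactly the source of the factor $n-d-1$ in the target bound.

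\emph{Forest polytope membership.} Next I show that $x_e\coloneq \alpha_e/(1-a)$ lies in the polytope \eqref{eq:Edmonds}. Fix $S\subset E$. Rayleigh monotonicity gives $\reff(e;G)\le\reff(e;(V,S))$ for $e\in S$. Applying Foster's theorem on each component of $(V,S)$ then gives $\sum_{e\in S}\reff(e)\le r(S)$. Hence
\[
\sum_{e\in S}\alpha_e\le r(S)-a|S|\le (1-a)\,r(S),
\]
using $|S|\ge r(S)$. By Theorem~\ref{thm:Edmonds}, $\alpha=(1-a)\sum_F\lambda_F\mathbf 1_F$ is a convex combination of forest indicators. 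A forest contains at most $n-1$ edges, but we also know the exact mass above, so the weights satisfy $(1-a)\sum_F\lambda_F|F|=\frac{n-d-1}{d+1}$.

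\emph{Bounding the form.} Insert the decomposition into the bilinear form. The $(F,F')$ term is bounded by $\sum_x \deg_F(x)\deg_{F'}(x)$ minus the diagonal part $\sum_x\sum_{e\ni x}\mathbf 1_F(e)\mathbf 1_{F'}(e)$. I then control each forest's contribution by its edge count times the largest possible per-edge contribution. The first cruder estimate uses the star constraint $S=\{e\ni x\}$, which gives $\sum_{e\ni x}\alpha_e\le d(1-a)$. Together with $\sum_x\sum_{e\ni x}\alpha_e=2\sum_e\alpha_e$, this yields
\[
\bbE[\alpha\beta]\le \frac{2(n-d-1)}{n(d+1)^2},
\]
which is of the right order but off by roughly a factor $2$.

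\emph{Main obstacle.} The hard step is sharpening this to $\frac{(d+2)(n-d-1)}{nd^2(d+1)}$. That requires using the subtracted diagonal $\sum_{e\ni x}\alpha_e^2$, that is, using that only products of \emph{distinct} edges at $x$ count. I would try to bound $\sum_x\deg_F(x)(\deg_F(x)-1)$ for forests of maximum degree $\le d$ by a constant times $(d-1)|F|$, and then bound the mixed $(F,F')$ terms via convexity (Cauchy--Schwarz in $\lambda$). If the constants still do not match, I would additionally exploit the lower bound $\reff(e)\ge a$ edge by edge, through the transfer-current kernel at $x$ as in the proof of Lemma~\ref{lem:appendix-alpha-gamma}. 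There Kirchhoff's node law gives $\sum_{z\sim x}K=1$, which constrains the star sums more tightly than the rank bound alone.
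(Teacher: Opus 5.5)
There is a genuine gap. What you actually prove is $\bbE[\alpha\beta]\le \frac{2(n-d-1)}{n(d+1)^2}$, which is about twice the claimed bound. Your last paragraph is a list of things to try, not an argument.

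The loss matters. Inserted into the proof of Proposition~\ref{prop:appendix-UST-sharp}, the term $d(d-1)\bbE[\alpha\beta]$ would contribute about $2$ instead of $1$. The final estimate would then be close to $7$ for large $d$ and $n\gg d$, so \eqref{eq:thmUSTsharp} would not follow.

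The factor $2$ is lost in the step $\sum_x A_x^2\le \max_x A_x\cdot\sum_x A_x$, where $A_x=\sum_{y\sim x}\alpha_{xy}$. At the level of forests this amounts to $\sum_x\deg_F(x)^2\le 2d|F|$. That ignores the fact that in a forest the two endpoints of an edge cannot both have large degree.

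The missing step is Jensen on the random forest, combined with a sharp deterministic forest estimate. Take the random forest $\mathcal F$ from Edmonds' theorem, so that $A_x=\bbE[D_x]$ with $D_x=\deg(x;\mathcal F)$; then $\sum_x A_x^2\le \bbE\big[\sum_x D_x^2\big]$. For a forest with $m$ edges, $c$ non-trivial components and maximum degree at most $d$, one has $\sum_{x:D_x>0}(D_x-1)=m-c$. Hence
\[
\sum_x D_x^2=2m+\sum_{x:D_x>0}D_x(D_x-1)\le 2m+d(m-c)\le (d+2)m .
\]
Taking expectations gives $\sum_x A_x^2\le (d+2)\sum_e\alpha_e=\frac{(d+2)(n-d-1)}{d+1}$. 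The diagonal is handled by Cauchy--Schwarz, $\sum_{y\sim x}\alpha_{xy}^2\ge A_x^2/d$, which gives $\bbE[\alpha\beta]\le\frac{1}{nd^2}\sum_x A_x^2$ and hence the lemma. This is exactly the paper's proof.

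Your stronger membership statement, $\boldsymbol{\alpha}/(1-a)\in\operatorname{conv}\{\mathbf 1_F: F \text{ a forest}\}$, is correct; the argument via Rayleigh monotonicity, Foster on the components of $(V,S)$, and $|S|\ge r(S)$ works. Combined with the Jensen step above, it yields an extra factor $1-a=\frac{d-1}{d+1}$ in the bound on $\sum_x A_x^2$. With that factor, even discarding the diagonal term $\sum_{y\sim x}\alpha_{xy}^2$ entirely already gives the stated bound.
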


\begin{proof}
	For every edge $e=uv$, set
	\[
	\alpha_e\coloneq \reff(u\leftrightarrow v;G)-a.
	\]
	As above, $\alpha_e\ge0$. Let $T\sim\UST(G)$, and let $\mathbf 1_T\in\{0,1\}^E$ denote
	its edge indicator vector. Kirchhoff's formula \eqref{eq:K-formula} becomes
	\[
	\EUST[(\mathbf 1_T)_e]=\reff(e).
	\]
	Thus $r=(\reff(e))_{e\in E}$ is the expectation of spanning-tree incidence vectors. In
	particular, for every $S\subseteq E$,
	\[
	\sum_{e\in S}\reff(e)
	=
	\EUST[|T\cap S|]
	\le
	\operatorname{rank}(S),
	\]
	where $\operatorname{rank}(S)$ is the rank of $S$ in the graphic matroid $\sM(G)$. Hence
	\be\label{eq:alphae}
	\sum_{e\in S}\alpha_e
	\le
	\sum_{e\in S}\reff(e)
	\le
	\operatorname{rank}(S)
	\qquad(\forall\,S\subseteq E).
	\ee
	 Recall that by Edmonds' matroid polytope theorem, we have \eqref{eq:Edmonds}:
	\[
	\operatorname{conv}\{\mathbf 1_F: F\subset E \text{ is a forest}\}
	=
	\left\{x\in \mathbb R_{\ge 0}^E:\sum_{e\in S}x_e\le \operatorname{rank}(S)\text{ for all }S\subset  E \right \}.
	\]
	Write $\boldsymbol{\alpha}=(\alpha_e)_{e\in E}$. 
	By the fact $\alpha_e\ge0$  and  \eqref{eq:alphae}, we have that 
	\[
	\boldsymbol{\alpha}\in 	\left\{x\in \mathbb R_{\ge 0}^E:\sum_{e\in S}x_e\le \operatorname{rank}(S)\text{ for all }S\subset  E \right \}\,.
	\]
	 Then using the above Edmonds' matroid polytope theorem we have 
	\[
		\boldsymbol{\alpha}\in
	\operatorname{conv}\{\mathbf 1_F: F\subset E \text{ is a forest}\}.
	\]
	That is,  there are coefficients $\lambda_F\ge 0$, indexed by forests
	$F\subseteq E$ and summing to one, such that
	\[
	\boldsymbol{\alpha}=\sum_F \lambda_F \mathbf 1_F .
	\]
	Choosing $F$ with probability $\lambda_F$ gives a random forest
	$\mathcal F$ satisfying
	\[
	\mathbb P[e\in \mathcal F]
	=
	\sum_{F\ni e}\lambda_F
	=
	\alpha_e .
	\]
	
	For $x\in V$, define
	\[
	A_x\coloneq\sum_{y\sim x}\alpha_{xy},
	\qquad
	D_x\coloneq\deg(x;\mathcal F).
	\]
	Then $A_x=\bbE[D_x]$, and Jensen's inequality gives
	\[
	\sum_{x\in V}A_x^2
	\le
	\bbE\left[\sum_{x\in V}D_x^2\right].
	\]
	We next use a deterministic estimate for forests. If $F\subseteq E$ is a forest,
	$m=|F|$, and $D_x=\deg(x;F)$, then $\sum_xD_x=2m$. Let $c$ be the number of non-isolated
	components of $F$. Since a tree component with $m_i$ edges has $m_i+1$ vertices,
	\[
	\sum_{x:D_x>0}(D_x-1)=m-c.
	\]
	For $D_x>0$, the inequality $D_x\le d$ gives
	\[
	D_x(D_x-1)\le d(D_x-1).
	\]
	Therefore
	\[
	\sum_{x\in V}D_x^2
	=
	2m+\sum_{x:D_x>0}D_x(D_x-1)
	\le
	2m+d(m-c)
	\le
	(d+2)m.
	\]
	Applying this to $\mathcal F$ and taking expectations,
	\[
	\sum_{x\in V}A_x^2
	\le
	(d+2)\bbE[|\mathcal F|]
	=
	(d+2)\sum_{e\in E}\alpha_e.
	\]
	By Foster theorem \eqref{eq:Foster} and $|E|=nd/2$,
	\[
	\sum_{e\in E}\alpha_e
	=
	(n-1)-\frac{2}{d+1}\cdot\frac{nd}{2}
	=
	\frac{n-d-1}{d+1}.
	\]
	Thus
	\[
	\sum_{x\in V}A_x^2
	\le
	\frac{(d+2)(n-d-1)}{d+1}.
	\]
	
	Finally, condition on $X=x$. Since $(Y,Z)$ is uniformly distributed over ordered distinct
	neighbors of $x$,
	\[
	\bbE[\alpha\beta\mid X=x]
	=
	\frac{1}{d(d-1)}
	\left(A_x^2-\sum_{y\sim x}\alpha_{xy}^2\right).
	\]
	By Cauchy--Schwarz,
	\[
	\sum_{y\sim x}\alpha_{xy}^2\ge \frac{A_x^2}{d}.
	\]
	Consequently,
	\[
	\bbE[\alpha\beta]
	\le
	\frac{1}{nd^2}\sum_{x\in V}A_x^2
	\le
	\frac{(d+2)(n-d-1)}{n d^2(d+1)}. \qedhere
	\]
\end{proof}

\begin{proof}[Proof of Proposition~\ref{prop:appendix-UST-sharp}]
	By Lemma~\ref{lem:appendix-transfer-current},
	\[
	p_1
	=
	\bbE\left[r_1r_2-\frac14(r_1+r_2-t)^2\right].
	\]
	After substituting $r_1=a+\alpha$, $r_2=a+\beta$, and $t=a+\gamma$, this becomes
	\begin{align*}
	p_1
	&=
	\frac{3a^2}{4}
	+a\bbE[\alpha]
	+\frac a2\bbE[\gamma]
	+\bbE[\alpha\beta]
	-\frac14\bbE[(\alpha+\beta-\gamma)^2] \\
	&\le
	\frac{3a^2}{4}
	+a\bbE[\alpha]
	+\frac a2\bbE[\gamma]
	+\bbE[\alpha\beta].
	\end{align*}
	Using Lemmas~\ref{lem:appendix-alpha-gamma} and~\ref{lem:appendix-alpha-beta}, we obtain
	\begin{align*}
	d(d-1)p_1
	\le\;&
	\frac{3d(d-1)}{(d+1)^2}
	+
	\frac{4(d-1)(n-d-1)}{n(d+1)^2} \\
	&+
	\frac{4d(n-d-1)}{n(d+1)^2}
	+
	\frac{(d-1)(d+2)(n-d-1)}{n d(d+1)}.
	\end{align*}
	Combining this with Lemma~\ref{lem:appendix-second-moment} yields
	\begin{align*}
	\bbE\big[\deg(X;\omega)^2\big]
	\le\;&
	2-\frac2n
	+
	\frac{3d(d-1)}{(d+1)^2} \\
	&+
	\frac{4(2d-1)(n-d-1)}{n(d+1)^2}
	+
	\frac{(d-1)(d+2)(n-d-1)}{n d(d+1)}.
	\end{align*}
	Since $G$ is simple and $d$-regular, $n\ge d+1$, so
	\[
	0\le \frac{n-d-1}{n}\le1.
	\]
	The right-hand side is increasing in $(n-d-1)/n$, and $-2/n\le0$. Hence
	\begin{align*}
	\bbE\big[\deg(X;\omega)^2\big]
	&\le
	2+
	\frac{3d(d-1)+4(2d-1)}{(d+1)^2}
	+
	\frac{(d-1)(d+2)}{d(d+1)} \\
	&=
	6-\frac{d+7}{(d+1)^2}-\frac{2}{d(d+1)}
	<6.
	\end{align*}
	This proves the proposition.
\end{proof}

\subsubsection{Sharpness of the constant $6$}
\label{sec:UST_lower}

We now prove Proposition~\ref{prop:appendix-tightness}, i.e.,  the constant $6$ in Proposition~\ref{prop:appendix-UST-sharp} is sharp.

For each odd integer $d$, put $q=d+2$. We first construct a one-port dense block
$B_d$ on $q$ vertices. Start from the complete graph $K_q$, choose three distinct vertices $o,a,b$,
delete the two edges $\{o,a\}$ and $\{o,b\}$, and delete a perfect matching on
$V(K_q)\setminus\{o,a,b\}$, say $ \big\{  \{u_i,v_i\},i=1,\ldots,\frac{q-3}{2} \big\}$. The parity assumption on $d$ is used only here, to make
this perfect matching available. Write $S=\big\{\{o,a\},\{o,b\},\{u_i,v_i\},i=1,\ldots,\frac{q-3}{2}\big\}$ for the set of deleted edges. Let $H_d\coloneq \big(V(K_q),S\big)$ be the deleted subgraph and 
set
\[
B_d\coloneq \big(V(K_q),E(K_q)\setminus S\big)\,.
\]
The vertex $o$ is the port. By construction,
\[
\deg(o;B_d)=d-1,
\qquad
\deg(v;B_d)=d\;\;\big(\,\forall\,v\in V(K_q)\setminus\{o\}\big).
\]
\begin{example}\label{example}
		Take $d$ disjoint copies $B_d^{(1)},\ldots,B_d^{(d)}$ with ports
	$o_1,\ldots,o_d$, add one new vertex $c$, and add the $d$ edges
	\[
	\{c,o_1\},\ldots,\{c,o_d\}.
	\]
	Call the resulting graph $G_d$. Since the non-port vertices already have degree $d$,
	each port has degree $d-1$ inside its block and receives one bridge, and $c$ has degree
	$d$, the graph $G_d$ is finite, simple, connected, and $d$-regular. Also
	\[
	|V(G_d)|=1+dq=1+d(d+2).
	\]
\end{example}
\begin{figure}[htbp]
	\centering
	\begin{tikzpicture}[
	scale=0.88,
	every node/.style={font=\small},
	every label/.style={font=\scriptsize,inner sep=1pt},
	vertex/.style={circle,fill=black,inner sep=0pt,minimum size=3.1pt},
	port/.style={circle,fill=black,inner sep=0pt,minimum size=3.1pt},
	block/.style={draw=black,minimum width=1.65cm,minimum height=0.76cm},
	present/.style={draw=black!55,line width=0.3pt},
	bridge/.style={draw=black,line width=0.55pt},
	deleted/.style={draw=black,densely dashed,line width=0.5pt}
	]
	\begin{scope}
	\node at (0.25,2.45) {$B_d=K_q\setminus H_d$};
	
	\node[vertex,label=left:$o$] (o) at (-1.75,0) {};
	\node[vertex,label=above:$a$] (a) at (-0.05,1.45) {};
	\node[vertex,label=left:$b$] (b) at (-0.05,-1.45) {};
	\node[vertex,label=above:$u_1$] (u1) at (1.25,1.22) {};
	\node[vertex,label=right:$v_1$] (v1) at (2.30,0.62) {};
	\node[vertex,label=right:$u_2$] (u2) at (2.30,-0.62) {};
	\node[vertex,label=below:$v_2$] (v2) at (1.25,-1.22) {};
	
	\foreach \x/\y in {a/b,a/u1,a/v1,a/u2,a/v2,b/u1,b/v1,b/u2,b/v2,
		u1/u2,u1/v2,v1/u2,v1/v2,o/u1,o/v1,o/u2,o/v2}
	\draw[present] (\x) -- (\y);
	
	\draw[deleted] (o) -- (a);
	\draw[deleted] (o) -- (b);
	\draw[deleted] (u1) -- (v1);
	\draw[deleted] (u2) -- (v2);
	
	\node[align=center,font=\scriptsize,text width=3.25cm] at (0.25,-2.65)
	{dashed edges are deleted:\\ $oa,ob$ and a matching};
	\end{scope}
	
	\begin{scope}[xshift=5.70cm]
	\node at (1.85,2.55) {$G_d$};
	\node[vertex,label=left:$c$] (c) at (0,0) {};
	
	\node[block,anchor=west] (B1) at (1.62,1.65) {$B_d^{(1)}$};
	\node[block,anchor=west] (B2) at (1.62,0.55) {$B_d^{(2)}$};
	\node at (2.50,-0.45) {$\vdots$};
	\node[block,anchor=west] (Bd) at (1.62,-1.65) {$B_d^{(d)}$};
	
	\node[port,label=left:$o_1$] (o1) at (1.62,1.65) {};
	\node[port,label=left:$o_2$] (o2) at (1.62,0.55) {};
	\node[port,label=left:$o_d$] (od) at (1.62,-1.65) {};
	
	\draw[bridge] (c) -- (o1) node[pos=0.45,above left,font=\scriptsize] {$co_1$};
	\draw[bridge] (c) -- (o2);
	\draw[bridge] (c) -- (od) node[pos=0.45,below left,font=\scriptsize] {$co_d$};
	\node[font=\scriptsize] at (1.85,-2.60)
	{all $co_i$ are bridges};
	\end{scope}
	\end{tikzpicture}
	\caption{The sharpness construction for the constant $6$. The left panel shows the one-port dense block $B_d$: start with $K_q$, delete the two edges $\{o,a\},\{o,b\}$, and delete a perfect matching on the remaining $q-3=d-1$ vertices. The right panel shows $G_d$, obtained by attaching $d$ disjoint copies of $B_d$ to a new central vertex $c$ through their ports.}
	\label{fig:appendix-tightness-construction}
\end{figure}
\begin{lemma}\label{lem:appendix-dense-block}
	For the blocks $B_d$ constructed above, $B_d$ is connected for all sufficiently large
	odd $d$. If $T_d\sim\UST(B_d)$, then
	\[
	\frac1q\sum_{v\in V(B_d)}\bbE\big[\deg(v;T_d)^2\big]
	=
	5+O(q^{-1}).
	\]
\end{lemma}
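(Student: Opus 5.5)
Connectivity is immediate. $B_d$ is $K_q$ minus the edge set $S$, and every vertex has degree at most $2$ in $H_d$. So any two vertices of $B_d$ have at least $q-2-4>0$ common neighbours once $q\ge 7$. For the second moment, the plan is to apply \eqref{eq:relation-1-ad} of Lemma~\ref{lem:2nd-pNC}. That lemma only needs a measure supported on spanning trees; the automorphism invariance is not used in its proof. It gives
\[
\sum_{v}\bbE[\deg(v;T_d)^2]=2(q-1)+\sum_{e\sim f}\PUST[e,f\in T_d].
\]
By Lemma~\ref{lem:appendix-transfer-current}, each term equals $r_1r_2-\tfrac14(r_1+r_2-t)^2$, where $r_1,r_2,t$ are the effective resistances of the wedge $y$--$x$--$z$ in $B_d$. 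It therefore suffices to show that, uniformly over all wedges,
\[
r_1=\tfrac2q+O(q^{-2}),\qquad r_2=\tfrac2q+O(q^{-2}),\qquad t=\tfrac2q+O(q^{-2}).
\]
Granting this, each joint probability is $\tfrac{3}{q^2}+O(q^{-3})$. The number of ordered adjacent pairs is $\sum_v \deg(v)(\deg(v)-1)=q(q-3)(q-4)+O(q)$, where $O(q)$ absorbs the error from the $O(1)$ vertices of different degree. The sum is then $3q+O(1)$. Dividing by $q$ gives $2+3+O(q^{-1})=5+O(q^{-1})$, as claimed.

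The resistance estimates come from Lemma~\ref{lem:Lalpacian-example} and Lemma~\ref{lem:res-Lap}. The hypotheses of Lemma~\ref{lem:Lalpacian-example} hold: $H_d$ has maximum degree $2$, $B_d$ is connected, and $q\ge5$. Since $\mathbf e_u-\mathbf e_v\in\mathbf 1^\perp$, we get
\[
\reff(u\leftrightarrow v;B_d)=\tfrac1q(\mathbf e_u-\mathbf e_v)^{\mathsf T}\bigl(I-\tfrac1qL_{H_d}\bigr)^{-1}(\mathbf e_u-\mathbf e_v).
\]
Expand with a Neumann series: $(I-\tfrac1qL_H)^{-1}=I+\tfrac1qL_H+R$. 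Here $\|L_H\|\le4$, so $\|R\|\le \tfrac{16}{q^2}\cdot\tfrac{1}{1-4/q}=O(q^{-2})$ in operator norm. The identity term contributes $\|\mathbf e_u-\mathbf e_v\|^2=2$. The first-order term is $\tfrac1q(\mathbf e_u-\mathbf e_v)^{\mathsf T}L_H(\mathbf e_u-\mathbf e_v)$, which is bounded by $\tfrac{8}{q}$ in absolute value. The remainder contributes at most $2\|R\|=O(q^{-2})$. Hence $\reff(u\leftrightarrow v)=\tfrac2q+O(q^{-2})$ uniformly over all pairs $u\ne v$, and in particular over the three pairs of each wedge.

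The main obstacle is keeping all error terms uniform, which is what makes the $O(q^{-1})$ accounting honest. Each joint probability is $\tfrac{3}{q^2}+O(q^{-3})$ with an absolute implied constant, and there are $\Theta(q^3)$ wedges, so the accumulated error is $O(1)$. After dividing by $q$ it becomes $O(q^{-1})$. The operator-norm Neumann bound handles this uniformity directly. One could also check the answer against Lemma~\ref{lem: second moment of degree in UST}, since $K_q$ gives $5-\tfrac{11}{q}+\tfrac{6}{q^2}$ and $B_d$ is an $O(1)$-degree perturbation of it, though the argument above does not rely on that comparison.
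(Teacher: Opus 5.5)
Your proposal is correct and follows the paper's proof essentially step by step. The ingredients are the same: the Neumann-series expansion of $L_{B_d}^+$ via Lemma~\ref{lem:Lalpacian-example}, the uniform estimate $\reff(u\leftrightarrow v;B_d)=\tfrac2q+O(q^{-2})$, the transfer-current formula giving $\tfrac{3}{q^2}+O(q^{-3})$ per ordered wedge, and the identity $\sum_v\deg^2=2(q-1)+\sum_v\deg(\deg-1)$. You also justify connectivity of $B_d$, which the paper only asserts.

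There is one slip, and it is harmless. Every non-port vertex of $B_d$ has degree $q-2$; only $o$ has degree $q-3$. So the pair count is $(q-1)(q-2)(q-3)+(q-3)(q-4)=q(q-2)(q-3)+O(q)$, not $q(q-3)(q-4)+O(q)$, and these differ by $\Theta(q^2)$. Since only the leading term $q^3+O(q^2)$ enters, your conclusion $5+O(q^{-1})$ is unaffected.
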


\begin{proof}
	Assume $q=d+2\ge5$. We write $L_{B_d}$ and $L_{H_d}$ for the graph Laplacians, viewed
	as self-adjoint operators on $l_2{\big(V(B_d)\big)}$. 
	Since $L_{H_d}$ is real symmetric with eigenvalues $0=\lambda_1\leq \cdots\leq \lambda_{\max}$, we have $\|L_{H_d}\|_{2\to 2}=\lambda_{\max}$. 
	Since the deleted graph
	$H_d$ consists of the two edges incident to $o$ and a matching on the remaining
	vertices, its maximum degree $\Delta(H_d)$ is $2$. Hence by item (4) in Proposition~\ref{prop:Laplacian}, we have
	\[
	\|L_{H_d}\|_{2\to 2}\le 2\Delta(H_d)\le 4.
	\]
By Lemma~\ref{lem:Lalpacian-example}, $I-q^{-1}L_{H_d}$ is invertible. Set $A_d=q^{-1}L_{H_d}$.
Then $\|A_d\|_{2\to2}\le 4/q$. 
For the partial sums
$S_N\coloneq \sum_{k=0}^N A_d^k$,
\[
(I-A_d)S_N=S_N(I-A_d)=I-A_d^{N+1}.
\]
Since $\|A_d^{N+1}\|_{2\to2}\le \|A_d\|_{2\to2}^{N+1}\to0$ for $q>4$,
we get
\[
(I-A_d)^{-1}
=
\sum_{k\ge0}A_d^k.
\]
Moreover, for all sufficiently large $q$, say $q\ge8$,
\[
\|(I-A_d)^{-1}-I\|_{2\to2}
\le
\sum_{k\ge1}\|A_d\|_{2\to2}^k
\le
\frac{4/q}{1-4/q}
\le
\frac8q.
\]
Hence if we define $R_q$ via 
\[
	\frac1q\left(I-\frac1qL_{H_d}\right)^{-1}
=
\frac1qI+R_q,
\]
then 
\[
\|R_q\|_{2\to2}=O(q^{-2}).
\]

For any distinct vertices $u,v$ the vector $\mathbf e_u-\mathbf e_v$ lies in
$\mathbf 1^\perp$ and has $l_2$ norm $\sqrt{2}$. 
By Lemma~\ref{lem:Lalpacian-example}, 
	\[
	L_{B_d}^+ (\mathbf e_u-\mathbf e_v)
	=
	\frac1q\left(I-\frac1qL_{H_d}\right)^{-1}(\mathbf e_u-\mathbf e_v)
	=
	\frac1q (\mathbf e_u-\mathbf e_v)+R_q(\mathbf e_u-\mathbf e_v).
	\]
	By Lemma~\ref{lem:res-Lap}, we have  
	\[
		\reff(u\leftrightarrow v;B_d)
	=	(\mathbf e_u-\mathbf e_v)^{\mathsf T}L_{B_d}^+(\mathbf e_u-\mathbf e_v)\,.
	\]
Hence  uniformly over distinct
$u,v$,
\begin{align}\label{eq:res-est}
		\reff(u\leftrightarrow v;B_d)
		&=	(\mathbf e_u-\mathbf e_v)^{\mathsf T}\left[ \frac1q (\mathbf e_u-\mathbf e_v)+R_q(\mathbf e_u-\mathbf e_v)\right]\nonumber\\
		&= \frac{2}{q}+(\mathbf e_u-\mathbf e_v)^{\mathsf T}R_q(\mathbf e_u-\mathbf e_v)\nonumber\\
		&=\frac{2}{q}+O(q^{-2})\,.
\end{align}
	Kirchhoff's formula \eqref{eq:K-formula} gives
	\be\label{eq:deg-o-Td}
	\EUST[\deg(o;T_d)]
	=
	\sum_{y\sim o}\reff(o\leftrightarrow y;B_d)
	=
	(q-3)\left(\frac2q+O(q^{-2})\right)
	=
	2+O(q^{-1}).
	\ee
	
	It remains to estimate the average second moment. For two distinct edges
	$e=(x,y)$ and $f=(x,z)$ incident to $x$, orient both edges away from $x$.
	Lemma~\ref{lem:appendix-transfer-current} gives
	\[
	\PUST[e,f\in T_d]
	=
	r_1r_2-\frac14(r_1+r_2-t)^2,
	\]
	where
	\[
	r_1=\reff(x\leftrightarrow y;B_d),\quad
	r_2=\reff(x\leftrightarrow z;B_d),\quad
	t=\reff(y\leftrightarrow z;B_d).
	\]
	The resistance estimate \eqref{eq:res-est} above gives
	\[
	r_1=\frac2q+O(q^{-2}),\qquad
	r_2=\frac2q+O(q^{-2}),\qquad
	t=\frac2q+O(q^{-2}),
	\]
	and therefore
	\[
	\PUST[e,f\in T_d]
	=
	\left(\frac2q+O(q^{-2})\right)^2
	-\frac14\left(\frac2q+O(q^{-2})\right)^2
	=
	\frac3{q^2}+O(q^{-3})
	\]
	uniformly over ordered incident pairs.
	
\noindent	For $x\in V(B_d)$, the number of ordered pairs of distinct incident edges is
	$\deg(x;B_d)\bigl(\deg(x;B_d)- 1\bigr)$.
 Since the degrees in $B_d$ are $q-3$ at $o$
	and $q-2$ elsewhere,
	\[
	\sum_{x\in V(B_d)}\deg(x;B_d)\big(\deg(x;B_d)-1\big)
	=
	(q-3)(q-4)+(q-1)(q-2)(q-3)
	=q^3+O(q^2).
	\]
	Finally, every spanning tree on $q$ vertices has total degree $2(q-1)$, and
	\[
	\sum_{v\in V(B_d)}\deg(v;T_d)^2
	=
	\sum_{v\in V(B_d)}\deg(v;T_d)
	+
	\sum_{v\in V(B_d)}\deg(v;T_d)(\deg(v;T_d)-1).
	\]
	Taking expectations and using the preceding ordered-pair estimate yields
	\[
	\frac1q\sum_{v\in V(B_d)}\bbE[\deg(v;T_d)^2]
	=
	2-\frac2q
	+
	\frac1q\big(q^3+O(q^2)\big)\left(\frac3{q^2}+O(q^{-3})\right)
	=
	5+O(q^{-1}).\qedhere
	\]
\end{proof}

\begin{proof}[Proof of Proposition~\ref{prop:appendix-tightness}]
	It suffices to take odd $d$ and let $d\to\infty$. We will prove the result for the sequence of graphs $G_d$ in Example~\ref{example}. 
	
	The edges $\{c,o_i\}$ are bridges, so every spanning tree of $G_d$ contains all of
	them. After removing these bridges, what remains inside each
	copy $B_d^{(i)}$ must  be a spanning tree of that copy. Conversely, choosing one
	spanning tree in each block and then adding all bridges produces a spanning
	tree of $G_d$. Thus the uniform measure on spanning trees of $G_d$ factors as
	the product of the UST measures on the blocks, together with the deterministic
	bridge set. In particular, $\deg(c;\omega_d)=d$ almost surely.
	
	Let $T_d\sim\UST(B_d)$. By Lemma~\ref{lem:appendix-dense-block},
	\[
	\sum_{v\in V(B_d)}\bbE[\deg(v;T_d)^2]
	=
	q\big(5+O(q^{-1})\big)
	=
	5q+O(1),
	\]
	and
	\[
	\bbE[\deg(o;T_d)]\stackrel{\eqref{eq:deg-o-Td}}{=}2+O(q^{-1}).
	\]
	In $G_d$, each port receives one additional compulsory bridge edge. Hence the contribution
	of one block to the total sum of squared degrees in the UST of $G_d$ is
	\begin{align*}
	&\sum_{v\in V(B_d)}\bbE[\deg(v;T_d)^2]
	+
	\bbE\!\left[(\deg(o;T_d)+1)^2-\deg(o;T_d)^2\right] \\
	&\qquad =
	5q+O(1)+2\bbE[\deg(o;T_d)]+1
	=
	5q+O(1).
	\end{align*}
	Therefore
	\[
	\bbE\big[\deg(X_d;\omega_d)^2\big]
	=
	\frac{d^2+d\big(5q+O(1)\big)}{1+dq}.
	\]
	Here the term $d^2$ is the contribution of the central vertex $c$, and the term
	$d(5q+O(1))$ is the contribution of the $d$ identical blocks. Since $q=d+2$,
	the last expression equals
	\[
	5+\frac{d}{q}+O(q^{-1})\longrightarrow 6.
	\]
	The universal upper bound in Proposition~\ref{prop:appendix-UST-sharp} gives
	the matching upper bound for the supremum. Thus the supremum is exactly $6$,
	although it is not attained by any finite simple connected regular graph covered
	by Proposition~\ref{prop:appendix-UST-sharp}.
\end{proof}

\bibliography{ncmst_ref}
\bibliographystyle{plain}

\end{document}